\newcommand \Tor{\operatorname{Tor}}
\newcommand \iv{\operatorname{iv}}
\newcommand \pd{\operatorname{pd}}
\newcommand \h{\operatorname{ht}}
\newcommand \depth{\operatorname{depth}}
\newcommand \gr{\operatorname{gr}}
\newcommand \R{\mathcal{R}}
\newcommand \p{\mathfrak{p}}
\newcommand \K{\mathbb{K}}
\newcommand{\mS}{\mathcal{S}}
\newcommand{\KK}{\mathbb{K}}
\renewcommand{\vec}[1]{\mathbf{#1}}
\newtheorem{theorem}{Theorem}[section]
\newtheorem{definition}[theorem]{Definition}
\newtheorem{lemma}[theorem]{Lemma}
\newtheorem{proposition}[theorem]{Proposition}
\newtheorem{remark}[theorem]{Remark}
\newtheorem{corollary}[theorem]{Corollary}
\newtheorem{con}[theorem]{Conjecture}
\begin{document}
\title[Lov\'asz-Saks-Schrijver ideals and parity binomial edge ideals of graphs]{Lov\'asz-Saks-Schrijver ideals and parity binomial edge ideals of graphs}
\author[Arvind Kumar]{Arvind Kumar}
\email{arvkumar11@gmail.com}
\address{Department of Mathematics, Indian Institute of Technology
Madras, Chennai, INDIA - 600036}

\begin{abstract}
Let $G$ be a  simple graph on $n$ vertices. Let $L_G \text{ and } \mathcal{I}_G \:  $ denote the
 Lov\'asz-Saks-Schrijver(LSS) ideal and parity binomial edge ideal of $G$ in the polynomial ring $S = \K[x_1,
\ldots, x_n, y_1, \ldots, y_n] $ respectively.  We  classify  graphs whose LSS
ideals and parity binomial edge ideals are  complete intersections.  We  also classify  graphs whose LSS
ideals and parity binomial edge ideals are  almost complete intersections, and we prove that their Rees
algebra  is Cohen-Macaulay. We compute the second graded 
Betti number and obtain a minimal presentation of LSS ideals of  trees and odd
unicyclic graphs.  We also
obtain an explicit description of the defining ideal of the symmetric 
algebra of LSS ideals of  trees and odd unicyclic graphs. 
\end{abstract}
\keywords{Lov\'asz-Saks-Schrijver(LSS) ideal, parity binomial edge ideal, binomial edge ideal, syzygy, Rees Algebra, Betti number, complete intersection, almost complete intersection}
\thanks{AMS Subject Classification (2010): 13A30, 13D02,13C13, 05E40}
\dedicatory{Dedicated to Professor J\"urgen Herzog on the occasion of his 80th birthday}
\maketitle
\section{Introduction}

Let $\K$ be any field. Let $G$ be a  simple graph with $V(G) =[n]:= \{1, \ldots, n\}$. We study the following four classes of ideals associated with the graph $G$: 
\begin{itemize}
	\item \textbf{Binomial Edge Ideals:} Herzog et al. in \cite{HH1} and independently 
	Ohtani in \cite{oh} defined the {\it binomial edge ideal} of $G$ as $$J_G = (
	x_i y_j - x_j y_i ~ : i < j, \; \{i,j\} \in E(G)) \subset
	\K[x_1, \ldots, x_{n}, y_1, \ldots, y_{n}].$$ 
	\item \textbf{Lov\'asz-Saks-Schrijver ideals:} Let $d \geq 1$ be an integer. The ideal $$L_G^{\K}(d)= \left(\sum_{l=1}^d x_{il}x_{jl} : \{i,j\} \in E(G)\right)\subset \K[x_{kl}: 1\leq k\leq n, 1 \leq l \leq d]$$ is known as {\it Lov\'asz-Saks-Schrijver ideal} of the graph $G$ with respect to $\K$. The set of all orthogonal representation of the complementary graph of ${G}$ is the zero set  of the ideal $L_G^{\K}(d)$ in $\K^{n \times d}$. We refer the reader to \cite{lss89,lov79} for more on the orthogonal representation of graphs. In this article, we set $L_G :=L_G^{\K}(2)$.
	\item \textbf{Permanental Edge Ideals:} In \cite{her3}, Herzog et al. introduced the notation of permanental edge ideals of graphs.	
	The {\it permanental edge ideal} of a graph $G$ is denoted by $\Pi_G$ and it is defined as 
	$$\Pi_G =(x_iy_j+x_jy_i : \{i,j\} \in E(G)) \subset \K[x_1,\ldots,x_n,y_1,\ldots,y_n].$$
	\item \textbf{Parity Binomial Edge Ideals:} Kahle et al. in \cite{KCT} introduced the notion of parity binomial edge ideals of graphs. The {\it parity binomial edge ideal} of a graph $G$ is defined as $$
	\mathcal{I}_G=(x_ix_j-y_iy_j : \{i,j\} \in E(G)) \subset \K[x_1,\ldots,x_n,y_1,\ldots,y_n].$$ 
\end{itemize} 
In the recent past,
researchers have been trying to understand the connection between
combinatorial invariants of $G$ and algebraic invariants of $J_G$. The connection between the combinatorial properties of $G$ and the algebraic properties $J_G$ has been established by many authors, see \cite{her1,HH1,JACM,JAR, KM3, MM, KM12} for a partial
list. For $d=1$, the Lov\'asz-Saks-Schrijver ideal of a graph $G$ is a monomial ideal known as the edge ideal of graph $G$. The algebraic properties of edge ideals of graphs are well understood, see \cite[Chapter 9]{Mon}. For $d=2$, the Lov\'asz-Saks-Schrijver ideal of a graph $G$ is a binomial ideal defined as $L_G=(x_ix_j+y_iy_j: \{i,j\} \in E(G)) \subset \K[x_1,\ldots,x_n,y_1,\ldots,y_n]$. In \cite{her3}, Herzog et al. proved that if char$(\K) \neq 2$, then $L_G$ is a radical ideal. Also, they computed the primary decomposition of $L_G$ when $\sqrt{-1} \notin \K$ and char$(\K)\neq 2$. In \cite{AV19}, Conca and Welker studied the algebraic properties  of $L_G^{\K}(d)$. They proved that $L_G^{\K}(2)$ is complete intersection if and only if  $G$ does not contain claw  or even cycle (\cite[Theorem 1.4]{AV19}). Also, they proved that $L_G^{\K}(3)$ is prime if and only if $G$ does not contain claw or $C_4$. More precisely, in \cite{AV19}, Conca and Welker analyzed the question ``When is $L_G^{\K}(d)$ radical, complete intersection or prime''? In \cite{her3}, Herzog et al. computed  Gr$\ddot{\text{o}}$bner basis of permanental edge ideals of graphs. Also,  they proved that permanental edge ideal of a graph is a radical ideal, in \cite{her3}.  
In \cite{KCT}, Kahle et al. studied the algebraic properties such as primary decomposition, mesoprimary decomposition, Markov bases and radicality of parity binomial edge ideals. However, nothing is known about the algebraic properties such as complete intersection, almost complete intersection, Rees algebra, symmetric algebra and Betti numbers of parity binomial edge ideals.
In this article, we focus on the algebraic properties  such as almost complete intersection, projective dimension, Rees algebra, symmetric algebra and Betti numbers of $L_G, \Pi_G \text{ and } \mathcal{I}_G$. It was proved by Bolognini et al \cite[Corollary 6.2]{dav} that if $G$ is a bipartite graph, then $ L_G, \; \Pi_G \text{ and } \;\mathcal{I}_G$ are essentially same as $J_G$. In \cite{PZ}, Schenzel and Zafar studied the algebraic properties of complete bipartite graphs. In \cite{dav}, Bolognini et al. studied the Cohen-Macaulayness of binomial edge ideal of bipartite graphs. The algebraic properties of Cohen-Macaulay bipartite  graphs such as regularity, extremal Betti numbers are studied in \cite{JACM, CG19}.      
In this article, we characterize graphs whose  parity binomial edge ideals are complete intersections (Theorems \ref{bipartite-ci}, \ref{nonbipartite-ci}). We also classify graphs whose LSS ideals, permanental edge ideals and  parity binomial edge ideals are almost complete intersections. We prove that these
are either a subclass of trees, a subclass of unicyclic graphs or a subclass of bicyclic graphs(Theorems \ref{bipartite-aci}, \ref{odd-unicyclic-aci},  \ref{bicycilc-aci}, \ref{non-bipartite-odd-aci},  \ref{nonbipartite-even-aci}).

 A lot of
asymptotic invariants of an  ideal can be computed using the Rees algebra of that ideal. 
We study the Rees algebra of almost complete intersection LSS
ideals, permanental edge ideals and  parity binomial edge ideals.  
Cohen-Macaulayness of the  Rees algebra and the associated graded ring
of ideals have been a long-studied problem in commutative algebra. If
an ideal is complete intersection in a Cohen-Macaulay local
ring, then the corresponding associated graded ring and the Rees
algebra are known to be Cohen-Macaulay. In general, computing the
depth of these blowup algebras is a non-trivial problem. If an ideal
is an almost complete intersection ideal, then the
Cohen-Macaulayness of the Rees algebra and the associated graded ring
are closely related by a result of Herrmann, Ribbe and Zarzuela (see
Theorem \ref{aci-cmrees}).  To study the Cohen-Macaulayness of the associated graded ring of almost complete intersection LSS ideals, permanental edge ideals and  parity binomial edge ideals, we compute the projective dimension of almost complete intersection LSS ideals, permanental edge ideals and parity binomial edge ideals (Theorems \ref{pd-odd-unicyclic}, \ref{depth-odd-aci}, \ref{depth-even-aci}, \ref{depth-kite-aci}). We prove that the associated graded ring
and  the Rees algebra of almost complete intersection LSS ideals, permanental
edge ideals and parity binomial edge ideals are Cohen-Macaulay (Theorems \ref{rees-cohen1}, \ref{rees-cohen2}).  

An ideal $I$ of a commutative ring $A$ is said to be of \textit{linear type} if its Rees algebra  and symmetric algebra are isomorphic. In other words, the defining ideal of the Rees algebra is generated by linear forms. In general, it is quite a hard
task to describe the defining ideals of Rees algebras and  symmetric algebras.    
Huneke proved that if $I$ is generated by $d$-sequence, then $I$ is of linear type, \cite{Hu80}.  We compute the defining ideal of symmetric algebra of LSS ideals of trees and odd unicyclic graphs (Theorems \ref{syzygy-tree}, \ref{syzygy-odd-unicyclic}). In this process, we obtain second graded Betti number of LSS ideals of trees and odd unicyclic graphs (Theorems \ref{betti-tree}, \ref{betti-odd-unicyclic}).   We prove that if $L_G$ is an
almost complete intersection ideal, then $L_G$ is generated by a
$d$-sequence (Theorem \ref{d-seq-LSS}). This gives us the defining ideals of
the Rees algebras of almost complete intersection LSS ideals.

The article is organized as follows. We collect the notation and
related definitions in the second section. In Section 3, we characterize complete intersection parity binomial edge ideals. Also, we classify almost complete intersection LSS ideals, permanental edge ideals and parity binomial edge ideals. We study the Cohen-Macaulayness of Rees algebra of almost complete
intersection LSS ideals, permanental edge ideals and parity binomial edge ideals in Section 4. In Section 5, we describe the second graded 
Betti numbers and syzygies of the LSS ideals of trees and odd
unicyclic graphs. In particular, we describe the defining ideal of symmetric algebra of LSS ideals of trees and odd unicyclic graphs.

\section{Preliminaries}
Let $G$  be a  simple graph with the vertex set $[n]$ and edge set
$E(G)$. A graph on $[n]$ is said to be a \textit{complete graph}, if
$\{i,j\} \in E(G)$ for all $1 \leq i < j \leq n$. Complete graph on
$[n]$ is denoted by $K_n$. For $A \subseteq V(G)$, $G[A]$ denotes the
\textit{induced subgraph} of $G$ on the vertex set $A$, that is, for
$i, j \in A$, $\{i,j\} \in E(G[A])$ if and only if $ \{i,j\} \in
E(G)$.  For a vertex $v$, $G \setminus v$ denotes the  induced
subgraph of $G$ on the vertex set $V(G) \setminus \{v\}$.  A vertex $v \in V(G)$ is
said to be a \textit{cut vertex} if $G \setminus v$ has  more
connected components than $G$. A subset
$U$ of $V(G)$ is said to be a \textit{clique} if $G[U]$ is a complete
graph. A vertex $v$ of $G$ is said to be a \textit{simplicial vertex}
if $v$ is contained in only one maximal clique otherwise it is called an \textit{internal vertex}. For a vertex $v$,
$N_G(v) = \{u \in V(G) :  \{u,v\} \in E(G)\}$ denotes the
\textit{neighborhood} of $v$ in $G$ and  $G_v$ is the graph on the vertex set
$V(G)$ and edge set $E(G_v) =E(G) \cup \{ \{u,w\}: u,w \in N_G(v)\}$.
The \textit{degree} of a vertex  $v$, denoted by $\deg_G(v)$, is
$|N_G(v)|$.   For an edge $e$ of $G$, $G\setminus e$ is the graph
on the vertex set $V(G)$ and edge set $E(G) \setminus \{e\}$.  Let
$u,v \in V(G)$ be such that $e=\{u,v\} \notin E(G)$, then we denote by
$G_e$, the graph on the vertex set $V(G)$ and edge set $E(G_e) = E(G) \cup
\{\{x,y\} : x,\; y \in N_G(u) \; or \; x,\; y \in N_G(v) \}$.  A
\textit{cycle} is a connected graph $G$ with $\deg_G(v) = 2$ for all $v \in V(G)$. 
A cycle on $n$ vertices is denoted by $C_n$.  A  \textit{tree} is a connected graph which does not contain a
cycle. A
graph is said to be a \textit{unicyclic} graph, if it contains exactly
one cycle. The \textit{girth} of a graph $G$ is the length of a shortest
cycle in $G$. A unicyclic graph with even girth is called an \textit{even unicyclic} and with odd girth is called  an \textit{odd unicyclic} graph.   A graph $G$ is said to be \textit{bipartite} if 
there is a bipartition of $V(G)=V_1 \sqcup V_2$ such that for each
$i=1,2$, no two of the vertices of $V_i$ are adjacent, otherwise it is called \textit{non-bipartite} graph.  A complete bipartite graph on $m+n$ vertices, denoted by
$K_{m,n}$, is the graph having a vertex set $V(K_{m,n}) = \{u_1, \ldots,
u_m\} \cup \{v_1, \ldots, v_n\}$ and $E(K_{m,n}) = \{ \{u_i, v_j\} ~ :
~ 1 \leq i \leq m, 1 \leq j \leq n \}$. A \textit{claw} is the
complete bipartite graph $K_{1,3}$. A claw $\{u,v,w,z\}$ with center
$u$ is the graph with vertices $\{u,v,w,z\}$ and edges $\{\{u,v\},
\{u,w\}, \{u,z\}\}$. For a graph $G$, let $\mathfrak{C}_G$ denote the
set of all induced claws in $G$. A maximal subgraph of $G $ without a cut vertex is called a  \textit{block} of $G$. A graph $G$ is said to be a  \textit{block} graph if each  block of $G$ is a clique. If each block of a graph is either a cycle or an edge, then it is called a \textit{cactus} graph. A cactus graph such that exactly two blocks are cycles is called a \textit{bicyclic cactus} graph. Let $u,v \in V(G)$. Then $d(u,v)$ is length of a shortest path between $u$ and $v$ in $G$. A $(u,v)$-walk is a sequence of edges $ \{u,v_1\},\ldots, \{v_k,v\}$ in $G$.

Now, we recall the necessary notation from commutative algebra.
Let $A = \KK[x_1,\ldots,x_m]$ be a polynomial ring over an arbitrary
field $\KK$ and $M$ be a finitely generated graded  $A$-module. 
Let
\[
0 \longrightarrow \bigoplus_{j \in \mathbb{Z}} A(-j)^{\beta_{p,j}^A(M)} 
\overset{\phi_{p}}{\longrightarrow} \cdots \overset{\phi_1}{\longrightarrow} \bigoplus_{j \in \mathbb{Z}} A(-j)^{\beta_{0,j}^A(M)} 
\overset{\phi_0}{\longrightarrow} M\longrightarrow 0,
\]
be the minimal graded free resolution of $M$, where
$A(-j)$ is the free $A$-module of rank $1$ generated in degree $j$.
The number $\beta_{i,j}^A(M)$ is  called the {\it 
$(i,j)$-th graded Betti number} of $M$. The \textit{projective dimension} of $M$, denoted by $\pd_A(M)$, is defined as \[\pd_A(M):=\max\{i : \beta_{i,j}^A(M) \neq 0\}.\] It follows from the Auslander-Buchsbaum formula that $\depth_A(M)=m-\pd_A(M)$. We say that 
$M$ is  a \textit{finitely presented} $A$-module if there exists an
exact sequence of the form $A^p\stackrel{\varphi}{\longrightarrow} A^q
\stackrel{\psi}{\longrightarrow} M\longrightarrow 0$.  If
$q=\sum_{j \in \mathbb{Z}}\beta_{0,j}^A(M)$ and $p=\sum_{j \in \mathbb{Z}}\beta_{1,j}^A(M)$,  then this presentation is called a \textit{minimal presentation}. A homogeneous ideal $I \subset A$ is said to be \textit{complete
	intersection} if $\mu(I) = \h(I)$, where $\mu(I)$ denotes the
cardinality
of a minimal homogeneous generating set of $I$.  It is said to be 
\textit{almost complete intersection} if $\mu(I) = \h(I) + 1$ and
$I_{\mathfrak{p}}$ is complete intersection for all minimal primes
$\mathfrak{p}$ of $I$. Also, we say that $A/I$ is \textit{almost Cohen-Macaulay} if  $\depth_A(A/I)=\dim(A/I)-1$.

Let $G$ be a graph on $[n]$ and $S=\K[x_1,\ldots,x_n,y_1,\ldots,y_n]$. For an edge $e = \{i,j\}\in E(G)$ with $i <
j$, we define $f_e = f_{i,j} :=x_i y_j - x_j y_i$,  $g_e=g_{i,j}:=x_ix_j+y_iy_j$ and $\bar{g}_e=\bar{g}_{i,j}:=x_ix_j-y_iy_j$.
For  $T \subset [n]$, let $\bar{T} = [n]\setminus T$ and $c_G(T)$
denotes the number of connected components of $G[\bar{T}]$. Also, let $G_1,\ldots,G_{c_G(T)}$ be the connected 
components of $G[\bar{T}]$ and for every $i$, $\hat{G_i}$ denote the
complete graph on $V(G_i)$. Let $P_T(G) := (\underset{i\in T} \cup \{x_i,y_i\}, J_{\hat{G_1}},\ldots, J_{\hat{G}_{c_G(T)}}).$
Herzog et al. proved that $J_G$ is a radical ideal, \cite[Corollary 2.2]{HH1}. Also, they proved that for $T \subset [n]$, $P_T(G)$ is a prime ideal  and $J_G =  \underset{T \subseteq [n]}\cap
P_T(G)$, \cite[Theorem 3.2]{HH1}. 
A set $T\subset [n]$ is said to have \textit{cut point property} if for every
$i \in T$, $i$ is a cut vertex of the graph $G[\bar{T} \cup \{i\}]$.
They showed 
that $P_T(G)$ is a minimal prime
of $J_G$ if and only if either $T = \emptyset$ or $T \subset [n]$ has cut point property, \cite[Corollary 3.9]{HH1}.

We now recall some facts about LSS ideals. 
\subsection{Primary decomposition of  $L_G$ when $\sqrt{-1} \notin \KK$ and char($\K) \neq 2$}  Herzog et al. studied several properties of $L_G$. We recall some of those results which we require from   \cite{her3}:
\begin{itemize}
	\item	Set $I_{K_1}=(0)$, $I_{K_2}=(x_1x_2+y_1y_2)$. For $n>2$,  define the ideal  $I_{K_n}$ generated by
	the following binomials
	\begin{eqnarray*}
		g_{ij}&=& x_ix_j+y_iy_j, \quad 1\leq i<j\leq n,
		\nonumber
		\\
		f_{ij}&=&x_iy_j-x_jy_i, \quad 1\leq i<j\leq n,\\
		h_i&=&x_i^2+y_i^2, \quad\quad\quad  1\leq i\leq n.
		\nonumber
	\end{eqnarray*}
	
	\item
	For $1\leq m<n$  define the ideal $I_{K_{m,n-m}}$ generated by
	the  following binomials
	
	\begin{eqnarray*}
		g_{ij}&=& x_ix_j+y_iy_j, \quad 1\leq i\leq m, \quad m+1\leq j\leq n,
		\nonumber
		\\
		f_{ij}&=&x_iy_j-x_jy_i, \quad 1\leq i<j\leq m \quad \text{or}
		\quad m+1\leq i<j\leq n.
	\end{eqnarray*}
\end{itemize}
Then $I_{K_{m,n-m}}$ and $I_{K_n}$ are prime ideals, \cite[Theorems 2.4, 2.5]{her3}.
Let $G$ be  a  connected  graph on
the vertex set $V(G)=[n]$. If $G$ is non-bipartite, then we denote by
$\widetilde{G}$ the complete graph on the vertex set $V(G)$. If $G$ is a
bipartite graph, then there exists a bipartition of $V(G)=V_1 \sqcup V_2$, in this case, we denote by
$\widetilde{G}$ the complete bipartite graph on the vertex set $V(G)$ with respect
to the bipartition $V(G)=V_1 \sqcup V_2$. 

Let $G$ be a graph on the vertex set $[n]$. For
$T\subset [n]$, let $G_1,\ldots, G_{c_G(T)}$ are the connected  components of
$G[\bar{T}]$ and 
\[
Q_T(G)=(x_i,y_i : i\in T)+ I_{\widetilde{G}_1}+\ldots+ I_{\widetilde{G}_{c_G(T)}}.
\]

For $T \subset [n]$, $Q_T(G)$ is a prime ideal, \cite[Proposition 4.2]{her3}. 
Notice that if $G$ is a connected bipartite graph with bipartition $V(G)=V_1 \sqcup V_2$,
then $Q_{\emptyset}(G)=I_{K_{V_1,V_2}}$,
and if $G$ is a connected non-bipartite graph, then $Q_{\emptyset}(G)=I_{K_n}$.
For $T \subset [n]$, $b_G(T)$ is the number of bipartite connected components of
$G[\bar{T}]$. Here we consider an isolated vertex  as a bipartite graph.  For $T \subset [n]$, 	$\h (Q_T(G))=n+|T|-b_G(T),$ \cite[Proposition 4.1]{her3}. By \cite[Theorem 4.3]{her3}, we have \[L_G =\bigcap_{T \subset [n]} Q_T(G).\] 

The vertex $i\in [n]$ is said to be a
\textit{cut vertex} of $G$ if $c_G(\{i\}) >c_G(\emptyset)$ and it is said to be a
\textit{bipartition vertex} of $G$ if $b_G(\{i\}) >b_G(\emptyset)$.
Let $\mathcal{C}(G)$ be the collection of  sets $T\subset [n]$ such that each $i\in T$ is
either a cut vertex or a bipartition vertex of the graph $G[\bar{T}\cup \{i\}]$. In particular, $\emptyset\in \mathcal{C}(G)$. By \cite[Theorem 5.2]{her3}, for $T \subset [n]$, $Q_T(G)$ is a minimal prime of $L_G$ if and only if $T\in \mathcal{C}(G)$. Hence, we have
\[L_G =\bigcap_{T \in \mathcal{C}(G)} Q_T(G).\]
\subsection{Primary decomposition of  $\mathcal{I}_G$ for  any field $ \K$}
In \cite{KCT}, Kahle et al.  computed  primary decomposition of  parity binomial edge ideals of graphs. Here, we recall their results:
\begin{itemize}
	\item For a graph $G$, \begin{align*}
		W_G=&(\bar{g}_{i,j}: \text{ there is an odd }(i,j)-\text{walk in } G)\\&+(f_{i,j}: \text{ there is an even }(i,j)-\text{walk in } G).\end{align*}
	\item For a non-bipartite graph $G$, let 
	\[\mathfrak{p}^+(G)=(x_i+y_i: i \in V(G)), \; \mathfrak{p}^-(G)=(x_i-y_i: i \in V(G)).\] 
	\item For $T \subset [n]$, without loss of generality, we assume that $G_1,\ldots, G_{b_G(T)}$ are bipartite connected components of $G[\bar{T}]$ and $G_{b_G(T)+1},\ldots,G_{c_G(T)}$ are non-bipartite connected components of $G[\bar{T}]$. 
	\item For $T \subset [n]$ and $\sigma =(\sigma_{b_G(T)+1},\ldots,\sigma_{c_G(T)})\in \{+,-\}^{c_G(T)-b_G(T)}$, we associate an ideal 
	\[\mathfrak{p}_T^{\sigma}(G)=(x_i,y_i : i \in T)+ \sum_{i=1}^{b_G(T)} W_{G_i} +\sum_{j=b_G(T)+1}^{c_G(T)} \mathfrak{p}^{\sigma_j}(G_j).  \]
	 Then, $\mathfrak{p}_T^{\sigma}(G)$ is a prime ideal  \cite[Proposition 4.2]{KCT} and  \[\h(\mathfrak{p}^{\sigma}_T(G))=n+|T|-b_G(T).\]
	\item  If $\mathfrak{P}$ is a minimal prime ideal of $\mathcal{I}_G$, then $\mathfrak{P}=\mathfrak{p}_T^{\sigma}(G)$, for some $T \in \mathcal{C}(G)$ and $\sigma \in \{+,-\}^{c_G(T)-b_G(T)}$, \cite[Proposition 4.2, Lemma 4.4, Lemma 4.9]{KCT}.
	\item For $T \in \mathcal{C}(G)$, set $\mathcal{A}_T=\{t \in T: b_G(T)=b_G(T\setminus \{t\})\}$. Let $t \in \mathcal{A}_T$. We denote by $\mathcal{B}_T(t)$, the set of connected components of $G[\bar{T}]$ which are joined in $G[\bar{T} \cup \{t\}]$. Note that elements of $\mathcal{B}_T(t)$ are non-bipartite connected components of $G[\bar{T}]$.
	\item Let $T \in \mathcal{C}(G)$ and $\sigma \in  \{+,-\}^{c_G(T)-b_G(T)}$. The prime ideal $\mathfrak{p}_T^{\sigma}(G)$ is \textit{sign-split} prime ideal if for all $t \in \mathcal{A}_T$ the prime summands of $\mathfrak{p}_T^{\sigma}(G)$ corresponding to elements of $\mathcal{B}_T(t)$ has not the  same sign.
	\item A prime ideal  $\mathfrak{P}$ is a minimal prime of $\mathcal{I}_G$  if and only if $\mathfrak{P}=\mathfrak{p}_T^{\sigma}(G)$, for some sign-split prime ideal $\mathfrak{p}_T^{\sigma}(G),$ \cite[Theorem 4.15]{KCT}.
\end{itemize}
If characteristic of $\K$ is not two, then  parity binomial edge ideal of a graph is a radical ideal, \cite[Theorem 5.5]{KCT}. Hence, in this case, we have \[\mathcal{I}_G=\bigcap_{T \in \mathcal{C}(G)} \bigcap_{\sigma \in \{+,-\}^{c_G(T)-b_G(T)}} \mathfrak{p}_T^{\sigma}(G).\]

\section{(Almost)Complete Intersection  Ideals}
In this section, we classify complete intersection LSS ideals and parity binomial edge ideals. We also classify graphs whose LSS ideals and parity binomial edge ideals are  almost complete intersections.  
We first recall a fact about bipartite graphs from \cite{dav}. 
\begin{remark}\cite[Corollary 6.2]{dav}\label{main-rmk}\rm{
Let $G$ be a bipartite graph with bipartition $[n]=V_1\sqcup V_2$. We define $\varPhi_1 : S \rightarrow S$ as \[\varPhi_1(x_i)= \left\{
\begin{array}{ll}
x_i & \text{ if } i \in V_1 \\
y_i & \text{ if } i \in V_2
\end{array} \right.  \text{  and  } \;~  \varPhi_1(y_i)= \left\{
\begin{array}{ll}
y_i & \text{ if } i \in V_1 \\
-x_i & \text{ if } i \in V_2
\end{array} \right.\] and  
$\varPhi_2 : S \rightarrow S$ as \[\varPhi_2(x_i)= \left\{
\begin{array}{ll}
x_i & \text{ if } i \in V_1 \\
y_i & \text{ if } i \in V_2
\end{array} \right.  \text{  and  } \;~  \varPhi_2(y_i)= \left\{
\begin{array}{ll}
y_i & \text{ if } i \in V_1 \\
x_i & \text{ if } i \in V_2.
\end{array} \right.\] It is clear that $\varPhi_1$ and $\varPhi_2$ are isomorphism and $\varPhi_1(J_G)=L_G$ and $\varPhi_2(J_G)=\mathcal{I}_G$.  }
\end{remark}
We now begin with the classification of bipartite graphs whose LSS ideals, as well as parity binomial edge ideals, are complete intersections. In \cite{AV19}, Conca and Welker characterized graphs whose LSS ideals are complete intersections. They  proved that $L_G$ is complete intersection if and only if  $G$ does not contain claw  or even cycle (\cite[Theorem 1.4]{AV19}). Here, we give alternate form of their theorem and prove that $L_G$ is complete intersection if and only if $\mathcal{I}_G$ is complete intersection.
\begin{theorem}\label{bipartite-ci}
	Let $G$ be a bipartite graph on $[n]$. Then $L_G$ is complete intersection if and only if $\mathcal{I}_G$ is complete intersection if and only if $G$ is a disjoint union of paths. 
\end{theorem}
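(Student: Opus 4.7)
The plan is to reduce both equivalences to the binomial edge ideal $J_G$ via Remark \ref{main-rmk} and then invoke the Conca--Welker classification \cite[Theorem 1.4]{AV19}. Since $G$ is bipartite, the remark supplies $\K$-algebra automorphisms $\varPhi_1,\varPhi_2$ of $S$ with $\varPhi_1(J_G)=L_G$ and $\varPhi_2(J_G)=\mathcal{I}_G$. These automorphisms send minimal generating sets to minimal generating sets and preserve the height of an ideal, so they preserve the property of being a complete intersection. Consequently the three conditions
\[
L_G \text{ is a complete intersection}, \qquad J_G \text{ is a complete intersection}, \qquad \mathcal{I}_G \text{ is a complete intersection}
\]
are mutually equivalent, which already gives the first ``iff'' in the statement.

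For the graph-theoretic characterization I would apply \cite[Theorem 1.4]{AV19}: $L_G$ is a complete intersection iff $G$ contains no induced claw and no even cycle. The key observation is that since $G$ is bipartite, every cycle of $G$ has even length, so the condition ``$G$ has no even cycle'' is equivalent to ``$G$ is acyclic'', i.e., $G$ is a forest. Moreover, a forest contains no induced claw iff every vertex has degree at most $2$, which is exactly the condition that every connected component of $G$ is a path. Combining these observations, $L_G$ is a complete intersection iff $G$ is a disjoint union of paths.

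The argument is essentially a bookkeeping reduction: once Remark \ref{main-rmk} is available, the equivalence $L_G \text{ CI} \Leftrightarrow \mathcal{I}_G \text{ CI}$ is automatic, and the Conca--Welker criterion plus the bipartite hypothesis handles the rest. There is no substantial obstacle; the only point worth spelling out is the collapse of the ``no claw, no even cycle'' condition to ``disjoint union of paths'' under the bipartite assumption, which proceeds in the two elementary steps indicated above.
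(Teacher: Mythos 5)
Your proposal is correct, and its first half is exactly the paper's argument: both use the automorphisms $\varPhi_1,\varPhi_2$ of Remark \ref{main-rmk} to identify $L_G$ and $\mathcal{I}_G$ with $J_G$ up to a graded change of coordinates, which preserves $\mu$ and height and hence the complete intersection property. The two proofs diverge only at the graph-theoretic characterization. The paper finishes by citing Rinaldo's classification \cite[Theorem 1]{Rin13}, which says directly that $J_G$ is a complete intersection if and only if $G$ is a disjoint union of paths; this makes the theorem an independent re-derivation of the bipartite case of the Conca--Welker criterion, consistent with the paper's announced aim of giving an ``alternate form'' of that result. You instead invoke Conca--Welker \cite[Theorem 1.4]{AV19} for $L_G$ itself and then collapse the ``no claw, no even cycle'' condition under the bipartite hypothesis: bipartite plus no even cycle forces $G$ to be a forest, and in a forest the neighbors of any vertex are pairwise non-adjacent, so claw-freeness is equivalent to maximum degree at most $2$, i.e., every component is a path. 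Both elementary steps are sound (and in a forest the distinction between claw as subgraph and induced claw evaporates, so the statement of \cite[Theorem 1.4]{AV19} suffices in either reading). What each route buys: yours avoids the extra reference \cite{Rin13} and keeps the graph theory explicit and self-contained, at the cost of leaning on the very theorem of Conca--Welker that the paper's theorem is meant to restate, so it does not yield an independent proof of the characterization; the paper's route through $J_G$ and Rinaldo does.
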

\begin{proof}
Since $G$ is a bipartite graph, by Remark \ref{main-rmk}, $L_G =\varPhi_1( J_G)$ and $\mathcal{I}_G =\varPhi_2( J_G)$. Therefore, $L_G$ is complete intersection if and only if $J_G$ is complete intersection if and only if $\mathcal{I}_G$ is complete intersection. Hence, the desired result follows from \cite[Theorem 1]{Rin13}.
\end{proof}
 Now, we move on to characterize non-bipartite graphs whose LSS ideals, permanental edge ideals and parity binomial edge ideals  are complete intersections. For this, we need the following lemma.
 \begin{lemma}\label{colon-non-bipartite}
 	Let $G$ be a non-bipartite graph on $[n]$. Assume that there exists  $e=\{u,v\} \in E(G)$ such that $G\setminus e$ is a bipartite graph. Then $$L_{G\setminus e}:g_e=L_{G\setminus e}+(f_{i,j}: i,j \in N_{G\setminus e}(u) \text{ or } i,j \in N_{G\setminus e}(v))=\varPhi_1(J_{(G\setminus e)_e})$$ and $$\mathcal{I}_{G \setminus e}:\bar{g_e}=\mathcal{I}_{G \setminus e}+(f_{i,j}: i,j \in N_{G\setminus e}(u) \text{ or } i,j \in N_{G\setminus e}(v))= \varPhi_2(J_{(G\setminus e)_e}).$$
 \end{lemma}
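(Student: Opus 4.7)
The plan is to transfer the problem via the isomorphisms $\varPhi_1,\varPhi_2$ of Remark~\ref{main-rmk} to a single colon-ideal identity about the binomial edge ideal $J_{G\setminus e}$, and then handle that identity through primary decomposition. Because $G$ is non-bipartite while $G\setminus e$ is bipartite, the endpoints of $e=\{u,v\}$ must lie in the same part of every bipartition of $G\setminus e$; otherwise adjoining $e$ would still produce a bipartite graph. Fix a bipartition $V(G)=V_1\sqcup V_2$ of $G\setminus e$ with $u,v\in V_1$. Then $\varPhi_1$ and $\varPhi_2$ act as the identity on each of $x_u,x_v,y_u,y_v$, so $\varPhi_1(g_e)=g_e$ and $\varPhi_2(\bar{g}_e)=\bar{g}_e$; after applying $\varPhi_1^{-1}$ and $\varPhi_2^{-1}$ the lemma reduces to establishing the identity $J_{G\setminus e}:g_e=J_{G\setminus e}:\bar{g}_e=J_{(G\setminus e)_e}$.

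The equality of the second and third members in each chain---i.e.\ $\varPhi_1(J_{(G\setminus e)_e})=L_{G\setminus e}+(f_{ij}:i,j\in N_{G\setminus e}(u)\text{ or }i,j\in N_{G\setminus e}(v))$ and its $\varPhi_2$ analogue---follows by evaluating $\varPhi_1,\varPhi_2$ on each generator $f_{ij}$ of $J_{(G\setminus e)_e}$: for $\{i,j\}\in E(G\setminus e)$ the endpoints lie in opposite parts, giving $\varPhi_1(f_{ij})=\pm g_{ij}$ and $\varPhi_2(f_{ij})=\pm\bar{g}_{ij}$; for $\{i,j\}$ with $i,j\in N_{G\setminus e}(u)\subseteq V_2$ or $i,j\in N_{G\setminus e}(v)\subseteq V_2$ both indices lie in $V_2$, so $\varPhi_1(f_{ij})=f_{ij}$ and $\varPhi_2(f_{ij})=-f_{ij}$. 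The inclusion $L_{G\setminus e}+(f_{ij}:\ldots)\subseteq L_{G\setminus e}:g_e$ follows from the S-pair identities
\[
x_j g_{ui}-x_i g_{uj}=-y_u f_{ij},\qquad y_j g_{ui}-y_i g_{uj}=x_u f_{ij}\quad\text{for }i,j\in N_{G\setminus e}(u),
\]
which place $x_u f_{ij}, y_u f_{ij}\in L_{G\setminus e}$, so that $f_{ij}\cdot g_e=x_v(x_u f_{ij})+y_v(y_u f_{ij})\in L_{G\setminus e}$; the identical computation using $\bar{g}_{kl}$ in place of $g_{kl}$ handles the parity-binomial version.

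The principal obstacle is the reverse inclusion $L_{G\setminus e}:g_e\subseteq\varPhi_1(J_{(G\setminus e)_e})$, for which I would show that $g_e$ is a nonzerodivisor on $S/\varPhi_1(J_{(G\setminus e)_e})$. Since $\varPhi_1$ is a ring automorphism and $J_{(G\setminus e)_e}$ is radical, this reduces to verifying $g_e\notin P_T((G\setminus e)_e)$ for every set $T$ with the cut-point property in $(G\setminus e)_e$. The crucial graph-theoretic fact is that no such $T$ can contain either $u$ or $v$: the edges added when passing from $G\setminus e$ to $(G\setminus e)_e$ lie entirely among $N_{G\setminus e}(u)$ and $N_{G\setminus e}(v)$, so $N_{(G\setminus e)_e}(u)=N_{G\setminus e}(u)$ and this neighbourhood is now a clique in $(G\setminus e)_e$; therefore removing $u$ from $(G\setminus e)_e[\bar T\cup\{u\}]$ keeps all of its neighbours inside a single connected piece and cannot strictly increase the number of components, contradicting the cut-vertex requirement on $u$. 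With $u,v\notin T$ secured, $g_{uv}$ does not lie in any summand $J_{K_{V(H_k)}}$ of $P_T((G\setminus e)_e)$: parametrising collinear tuples of vectors in $\K^2$ by $x_i\mapsto a_i s,\ y_i\mapsto a_i t$ realises $S/J_{K_n}$ as a domain in which $g_{uv}$ maps to $a_u a_v(s^2+t^2)\neq 0$, and the same calculation with $s^2-t^2$ shows $\bar{g}_{uv}\notin J_{K_n}$. An analogous tensor-product argument rules out $g_e,\bar{g}_e\in P_T$ in the case that $u,v$ lie in distinct components of $(G\setminus e)_e[\bar T]$, and combining the two inclusions finishes both chains of equalities in the lemma.
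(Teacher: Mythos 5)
Your proof is correct, but after the shared opening move it takes a genuinely different route from the paper's. Both arguments begin the same way: since $G$ is non-bipartite, $u$ and $v$ lie in the same part of any bipartition of $G\setminus e$, so $\varPhi_1(g_e)=g_e$ and $\varPhi_2(\bar{g}_e)=\bar{g}_e$, and the lemma reduces via Remark \ref{main-rmk} to a colon computation for the binomial edge ideal $J_{G\setminus e}$. From there the paper colons the full decomposition $J_{G\setminus e}=\bigcap_{T\subseteq[n]}P_T(G\setminus e)$: a Gr\"obner-basis membership test shows $g_e\in P_T(G\setminus e)$ if and only if $T\cap\{u,v\}\neq\emptyset$, so the colon equals $\bigcap_{T\subseteq[n]\setminus\{u,v\}}P_T(G\setminus e)$, which is identified with $J_{(G\setminus e)_e}$ by citing \cite[Proposition 2.1]{RR14}. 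You instead prove the two inclusions directly: the containment $L_{G\setminus e}+(f_{i,j}:\ldots)\subseteq L_{G\setminus e}:g_e$ via explicit S-pair identities (which check out: $x_jg_{u,i}-x_ig_{u,j}=-y_uf_{i,j}$ and $y_jg_{u,i}-y_ig_{u,j}=x_uf_{i,j}$, with the analogous signs for the $\bar{g}$ version), and the reverse containment by showing $g_e$ is a nonzerodivisor modulo $\varPhi_1(J_{(G\setminus e)_e})$, working with the minimal primes $P_T((G\setminus e)_e)$ of the \emph{target} ideal rather than with the decomposition of the source $J_{G\setminus e}$. Your two key inputs---the observation that $N_{(G\setminus e)_e}(u)=N_{G\setminus e}(u)$ becomes a clique (using $\{u,v\}\notin E(G\setminus e)$), so that $u,v$ can never belong to a set with the cut point property, and the rank-one parametrization $x_i\mapsto a_is$, $y_i\mapsto a_it$ showing $g_{u,v},\bar{g}_{u,v}\notin J_{K_n}$ in any characteristic---together replace both the paper's Gr\"obner membership test and its appeal to \cite{RR14}; in effect you reprove exactly the special case of that proposition that the lemma needs. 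The trade-off: the paper's argument is shorter given the literature, while yours is self-contained modulo the standard facts from \cite{HH1} (radicality of binomial edge ideals, primality of $P_T$, and the cut-point characterization of minimal primes), and it has the small bonus of exhibiting the colon generators through explicit syzygies, which foreshadows the relations used later in Theorems \ref{odd-unicyclic-aci} and \ref{syzygy-odd-unicyclic}. One cosmetic point: when listing the primes that $g_e$ must avoid, include $T=\emptyset$ alongside the sets with the cut point property; your computation covers it, since the non-membership argument only uses $u,v\notin T$.
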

 \begin{proof}
 	Since $G$ is a non-bipartite graph and $G \setminus e$ is a bipartite graph with bipartition $[n]=V_1 \sqcup V_2$, we get that either $u,v \in V_1$ or  $u,v \in V_2$. Therefore, $\varPhi_1(g_e) = g_e$ and $\varPhi_2(\bar{g_e}) = \bar{g_e}$.  By Remark \ref{main-rmk}, we have \[L_{G \setminus e}:g_e
 	=\varPhi_1(J_{G \setminus e}) : \varPhi_1(g_e)=\varPhi_1(J_{G\setminus e}:g_e) \text{ and }\] \[\mathcal{I}_{G \setminus e}:\bar{g_e}
 	=\varPhi_2(J_{G \setminus e}) : \varPhi_2(\bar{g_e})=\varPhi_2(J_{G\setminus e}:\bar{g_e}) .\] Now, consider \[J_{G \setminus e}:g_e = \bigcap_{T \subset [n]} (P_T(G\setminus e) :g_e).\] Since generating set  of $P_T(G\setminus e)$ is a Gr$\ddot{\text{o}}$bner basis of $P_T(G\setminus e)$ with respect to lex order induced by $x_1>\cdots>x_n>y_1>\cdots> y_n$, we have $g_e = x_ux_v+y_uy_v \in P_T(G\setminus e)$ if and only if either $u \in T$ or $v\in T$ if and only if $\bar{g_e}= x_ux_v-y_uy_v \in P_T(G\setminus e)$. Therefore, 
 	\[J_{G \setminus e}:g_e = \bigcap_{T \subset [n]} (P_T(G\setminus e) :g_e)= \bigcap_{T \subset ([n] \setminus \{u,v\})} P_T(G\setminus e)=J_{(G\setminus e)_e},\] where the last equality follows from  \cite[Proposition 2.1]{RR14}.
 	Hence, $L_{G \setminus e}:g_e= \varPhi_1(J_{(G\setminus e)_e})=L_{G \setminus e}+(f_{i,j}: i,j \in N_{G\setminus e}(u) \text{ or } i,j \in N_{G\setminus e}(v))$. In a similar manner one can prove that  $\mathcal{I}_{G \setminus e}:\bar{g_e}= \varPhi_2(J_{(G\setminus e)_e})=\mathcal{I}_{G \setminus e}+(f_{i,j}: i,j \in N_{G\setminus e}(u) \text{ or } i,j \in N_{G\setminus e}(v))$.
 \end{proof}

Due to the following remark, if char$(\K)=2$, then $\Pi_G$ and $J_G$ are  essentially the same  and if char$(\K) \neq 2$, then $\Pi_G$ is essentially same as $\mathcal{I}_G$.
\begin{remark}\label{rmk-parity}\rm{
		Let $G$ be a graph with vertex set $[n]$. If char$(\K) = 2$, it follows from their definitions that $\mathcal{I}_G =L_G$ and $\Pi_G =J_G$.  Suppose char($\K)\neq 2$.  We define $\eta : S \rightarrow S$ as 
		\[\eta(x_i)=x_i +  y_i 
		\text{  and  } \;~  \eta(y_i)= 
		x_i- y_i \text{ for all } i \in V(G)
		.\] It is clear that $\eta$ is an isomorphism and $\Pi_G=\eta(\mathcal{I}_G)$. If $\sqrt{-1} \in \KK$ and char($\K) \neq 2$, then   we define $\Psi : S \rightarrow S$ as 
		\[\Psi(x_i)=x_i + \sqrt{-1} y_i 
		\text{  and  } \;~  \Psi(y_i)= 
		x_i- \sqrt{-1}y_i \text{ for all } i \in V(G)
		.\] It is clear that $\Psi$ is an isomorphism and $L_G$ is the image of permanental ideal $\Pi_G$, i.e  $\Psi(\Pi_G)=L_G$. Thus, if $\sqrt{-1} \in \K$ and char$(\K) \neq 2$, then  $\Psi(\eta(\mathcal{I}_G))=L_G$.}
\end{remark}

 \begin{theorem}\label{nonbipartite-ci}
 	Let $G$ be a connected non-bipartite graph on $[n]$. Then $L_G$ is complete intersection if and only if $G$  is an odd cycle if and only if $\mathcal{I}_G$ is complete intersection.  
 \end{theorem}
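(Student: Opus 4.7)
The plan is to combine the Conca--Welker characterization of complete intersection LSS ideals with a short graph-theoretic argument, and to bridge $L_G$ and $\mathcal{I}_G$ via the isomorphisms of Remark \ref{rmk-parity} after a harmless extension of scalars.

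First I would establish the equivalence between $L_G$ and $\mathcal{I}_G$ being complete intersections. When $\operatorname{char}(\K)=2$, Remark \ref{rmk-parity} gives $\mathcal{I}_G=L_G$ outright. Otherwise, set $\K'=\K(\sqrt{-1})$ and $S'=S\otimes_{\K}\K'$; the inclusion $S\hookrightarrow S'$ is faithfully flat, so both the height and the minimal number of generators of a homogeneous ideal are preserved by this base change, and in the Cohen--Macaulay ring $S$ the complete intersection property is the purely numerical condition $\mu(I)=\h(I)$. Hence it suffices to prove the equivalence after extending scalars. Over $S'$, Remark \ref{rmk-parity} applies and the composition $\Psi\circ\eta$ sends $\mathcal{I}_G S'$ isomorphically onto $L_G S'$, so the two ideals are complete intersections simultaneously.

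Next, I would invoke \cite[Theorem 1.4]{AV19}, which asserts that $L_G$ is a complete intersection if and only if $G$ contains neither an induced claw nor an induced even cycle, and then establish the combinatorial claim: a connected non-bipartite graph $G$ with these two properties is an odd cycle. Pick an odd cycle $C\subset G$ of shortest length. Any chord of $C$ would split it into two cycles whose lengths sum to $|C|+2$, one of which would be odd and strictly shorter than $C$; hence $C$ is induced. Assume for contradiction that $V(G)\neq V(C)$, and by connectedness pick $v\notin V(C)$ adjacent to some $u\in V(C)$; let $u_1,u_2$ be the two neighbors of $u$ on $C$. On $\{u,u_1,u_2,v\}$ the three edges at $u$ are present, and to avoid an induced claw one of $\{u_1,u_2\}$, $\{v,u_1\}$, $\{v,u_2\}$ must be in $E(G)$. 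The first is excluded because $C$ is induced; after relabeling, the remaining case gives $\{v,u_1\}\in E(G)$, and then replacing the edge $uu_1$ of $C$ by the path $u\text{--}v\text{--}u_1$ yields a cycle of length $|C|+1$, which is even, a contradiction. Therefore $G=C$ is an odd cycle.

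The main obstacle I anticipate is merely packaging the combinatorial step cleanly, but the claw/chord dichotomy above handles all cases. For the converse, the odd cycle $C_n$ has maximum degree two and admits only itself as a cycle, so it contains neither a claw nor an even cycle, and \cite[Theorem 1.4]{AV19} yields that $L_{C_n}$ is a complete intersection; the first step then also forces $\mathcal{I}_{C_n}$ to be a complete intersection, closing the circle of implications.
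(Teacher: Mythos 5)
There is a genuine gap in your combinatorial step, and it stems from reading ``claw'' and ``even cycle'' in \cite[Theorem 1.4]{AV19} as \emph{induced} subgraphs. Under that reading your lemma is false: the paw (a triangle $\{1,2,3\}$ with a pendant vertex $4$ attached to $1$) is connected, non-bipartite, contains no induced claw (the neighbors $2,3,4$ of vertex $1$ include the edge $\{2,3\}$) and no even cycle at all, yet it is not an odd cycle --- and indeed $L_G$ is not a complete intersection there, since $\mu(L_G)=4$ while the minimal prime $Q_{\{1\}}(G)$ has height at most $3$. Your proof of the lemma breaks at exactly this point: you dismiss the case $\{u_1,u_2\}\in E(G)$ ``because $C$ is induced,'' but when the shortest odd cycle $C$ is a triangle, $u_1$ and $u_2$ are adjacent \emph{on} $C$, so no induced claw arises and no contradiction follows. (Even for $|C|\geq 5$ your final step only produces an even cycle as a subgraph, not an induced one, so it would not contradict the induced reading either.) The repair is to use the correct, subgraph reading of Conca--Welker: containing no claw as a subgraph simply means $\Delta(G)\leq 2$, so a connected such graph is a path or a cycle, and excluding even cycles plus non-bipartiteness forces an odd cycle --- your elaborate case analysis then becomes unnecessary.

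With that correction, the rest of your argument is sound and takes a genuinely different route from the paper. Your reduction of the equivalence ``$L_G$ CI $\iff$ $\mathcal{I}_G$ CI'' to the case $\sqrt{-1}\in\K$ via faithfully flat extension of scalars (legitimate here because, for homogeneous ideals in the polynomial ring $S$, both $\mu$ and $\h$ are preserved under base field extension and complete intersection is the numerical condition $\mu=\h$) is cleaner than the paper's treatment, which runs the same height computation twice --- once for $\mathcal{I}_G$ using the minimal primes $\mathfrak{p}_T^{\sigma}(G)$ of Kahle et al.\ and once for $L_G$ using the $Q_T(G)$ of Herzog et al.\ when $\sqrt{-1}\notin\K$. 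The paper also proves the forward direction without invoking \cite{AV19} at all (it deduces $G$ is odd unicyclic from $\h=\mu=n$ and then kills degree-$3$ cycle vertices by exhibiting minimal primes of height $\leq n-1$), and proves the converse by showing $g_e$ is a nonzerodivisor modulo $L_{P_n}$ via Lemma \ref{colon-non-bipartite}, whereas you outsource both directions to Conca--Welker; your route is shorter but less self-contained, and note the paper advertises this theorem precisely as an alternate proof of that characterization.
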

 \begin{proof}
 First, assume that $\mathcal{I}_G$ is complete intersection. Since $G$ is a non-bipartite graph, $\mathfrak{p}^+(G)$ is a minimal prime ideal of $\mathcal{I}_G$ and $\h(\mathfrak{p}^+(G))=n$.  Therefore, $\h(\mathcal{I}_G)=n=\mu(\mathcal{I}_G)$ which implies that $G$ is an odd  unicyclic graph. Let $u$ be a vertex which is part of the unique odd cycle. Since $u$ is a bipartition vertex of $G$, $\{u\} \in \mathcal{C}(G)$.  If $\deg_G(u)\geq 3$, then  $b_G(\{u\})\geq 2$. Thus,  $\h(\mathfrak{p}_{\{u\}}^{\sigma}(G))=n+1-b_G(\{u\}) \leq n-1$ and $\mathfrak{p}_{\{u\}}^{\sigma}(G)$ is a minimal prime ideal of $\mathcal{I}_G$, which conflicts the fact that $\h(\mathcal{I}_G) =n$. Consequently, $\deg_G(u) =2$ and hence, $G$ is an odd cycle. 
 
 Now, we assume that $L_G$ is complete intersection and char$(\K) \neq 2$. If $\sqrt{-1} \in \K$, then by Remark \ref{rmk-parity}, $\mathcal{I}_G$ is complete intersection and hence, $G$ is an odd cycle.  Suppose $\sqrt{-1} \notin \K$, then 
  $Q_{\emptyset}(G)=I_{K_n}$ is a minimal prime of $L_G$ as $G$ is a non-bipartite graph.  It follows from  \cite[Proposition 2.3]{her3} that $\h(I_{K_n})=n$.  Therefore, $\h(L_G)=n=\mu(L_G)$ which implies that $G$ is an odd  unicyclic graph. If $u$ is a vertex  of the unique odd cycle, then $u$ is a bipartition vertex of $G$. Thus, $\{u\} \in \mathcal{C}(G)$. Now, if $\deg_G(u)\geq 3$, then  $b_G(\{u\})\geq 2$ and hence,  $\h(Q_{\{u\}}(G)) \leq n-1$, which is a contradiction. This implies that $\deg_G(u) = 2$. Hence, $G$ is an odd cycle. 
  
  Conversely, we have to prove that $L_{C_n}$ and $\mathcal{I}_{C_n}$ are  complete intersections, for $n$ odd. Let $e=\{1,n\}$, then $C_n \setminus e=P_n$. By Theorem \ref{bipartite-ci}, $L_{P_n}$ and $\mathcal{I}_{P_n}$ are complete intersections. Note that $L_{C_n}=L_{P_n}+(g_e)$ and $\mathcal{I}_{C_n}=\mathcal{I}_{P_n}+(\bar{g_e})$. Therefore, it is enough to prove that  $L_{P_n}:g_{e}=L_{P_n}$ and $\mathcal{I}_{P_n}:\bar{g_{e}}=\mathcal{I}_{P_n}$ which immediately follows from Lemma \ref{colon-non-bipartite}.  Hence, $L_{C_n}$ and $ \mathcal{I}_{C_n}$  are complete intersections.
   \end{proof}
   
It follows from \cite[Corollary 4.6]{her3} that if $G$ is a graph on $n$ vertices, then $\h(L_G) \leq n-b_G(\emptyset)$. As a consequence we have the following:
\begin{corollary}\label{graph-ci}
	Let $G$ be a graph on $[n]$. Then $L_G$ is complete intersection if and only if $\mathcal{I}_G$ is complete intersection if and only if  all the bipartite connected components of $G$ are paths and non-bipartite connected components are odd cycles.
\end{corollary}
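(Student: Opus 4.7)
The plan is to reduce to the connected case and then invoke Theorems \ref{bipartite-ci} and \ref{nonbipartite-ci}. First I would write $G = G_1 \sqcup \cdots \sqcup G_s$ as the disjoint union of its connected components, and observe that the ideals $L_{G_i}$ (respectively $\mathcal{I}_{G_i}$) live in polynomial rings in pairwise disjoint sets of variables $\K[x_j, y_j : j \in V(G_i)]$, and that $L_G = \sum_i L_{G_i}$ and $\mathcal{I}_G = \sum_i \mathcal{I}_{G_i}$ inside $S$.

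Next I would use the fact that for ideals supported on disjoint sets of variables, both the minimal number of generators and the height are additive: $\mu(L_G) = \sum_i \mu(L_{G_i}) = \sum_i |E(G_i)|$, and by standard minimal-prime considerations $\h(L_G) = \sum_i \h(L_{G_i})$. Since $\mu(L_{G_i}) \geq \h(L_{G_i})$ for each $i$, the equality $\mu(L_G) = \h(L_G)$ forces $\mu(L_{G_i}) = \h(L_{G_i})$ for every $i$. Thus $L_G$ is a complete intersection if and only if each $L_{G_i}$ is; the same argument works verbatim for $\mathcal{I}_G$.

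Once the problem is reduced to connected components, I would dispatch bipartite components by Theorem \ref{bipartite-ci} (which forces such a component, being connected, to be a single path) and non-bipartite components by Theorem \ref{nonbipartite-ci} (which forces the component to be an odd cycle). Conversely, if every bipartite component is a path and every non-bipartite component is an odd cycle, the same two theorems give that each $L_{G_i}$ (and each $\mathcal{I}_{G_i}$) is a complete intersection; concatenating the corresponding regular sequences in disjoint variables yields a regular sequence generating $L_G$ (respectively $\mathcal{I}_G$), so $L_G$ and $\mathcal{I}_G$ are complete intersections.

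The only mild subtlety, and the step that deserves the most care in the write-up, is the additivity of $\h(L_G) = \sum_i \h(L_{G_i})$. This follows from the primary decomposition recalled in Section~2, because every minimal prime of $L_G$ restricts to a minimal prime of each $L_{G_i}$ (the variable sets being disjoint), and conversely sums of such minimal primes give minimal primes of $L_G$; taking a codimension-minimizing choice on each side gives the equality. The analogous statement for $\mathcal{I}_G$ uses the sign-split primary decomposition of Kahle--Cisto--Taylor recalled earlier. Everything else is a direct appeal to the previously proved theorems.
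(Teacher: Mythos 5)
Your proof is correct, and it is essentially the argument the paper intends: the paper states Corollary \ref{graph-ci} without proof, as an immediate consequence of Theorems \ref{bipartite-ci} and \ref{nonbipartite-ci}, and your reduction to connected components via additivity of $\mu$ and $\h$ over disjoint variable sets is the natural way to make that deduction precise. The one place where your mechanisms diverge is the ``only if'' bookkeeping: the paper flags the bound $\h(L_G)\leq n-b_G(\emptyset)$ from \cite[Corollary 4.6]{her3}, which, combined with $\mu(L_G)=|E(G)|$ and the edge count $|E(G_i)|\geq |V(G_i)|-1$ (with equality only for trees) for bipartite components and $|E(G_i)|\geq |V(G_i)|$ for non-bipartite ones, forces the componentwise equality $\mu(L_{G_i})=\h(L_{G_i})$; you instead get the same conclusion from height additivity, which buys you a cleaner statement that $L_G$ is a complete intersection if and only if each $L_{G_i}$ is, without invoking \cite{her3}. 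One small imprecision in your justification of additivity: over a field that is not algebraically closed, a sum $P+Q$ of minimal primes of $L_{G_i}$ in disjoint variables need not itself be prime (e.g.\ $(x^2+1)+(y^2+1)$ over $\mathbb{Q}$), so ``sums of such minimal primes give minimal primes'' is not literally true in general. This does not break the argument: for the inequality $\h(L_G)\leq \sum_i \h(L_{G_i})$ you only need primes containing $L_G$, and the reverse inequality (indeed equality) follows from $\h(I)=\dim S-\dim(S/I)$ together with additivity of Krull dimension under tensor products of affine $\K$-algebras; alternatively, in the present setting the explicit decompositions $Q_T(G)=\sum_i Q_{T\cap V(G_i)}(G_i)$ and $\mathfrak{p}_T^{\sigma}(G)$ recalled in Section~2 give the additivity directly. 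With that justification tightened, everything else in your write-up is a direct and correct appeal to the two connected-case theorems.
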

   Now, we move on to find  connected  graphs whose LSS ideals and  parity binomial edge ideals are almost complete intersections. In \cite{JAR}, Jayanthan et al. characterized connected graphs whose binomial edge ideals are  almost complete intersections.
\begin{theorem}\label{bipartite-aci}
	Let $G$ be a connected bipartite graph on $[n]$ which is not a path. Then $L_G$ is an almost complete intersection ideal if and only if $\mathcal{I}_G$ is   almost complete intersection  if and only if $G$ is either obtained by adding an edge between two disjoints paths or by adding an edge between two vertices of a path  such that the girth of $G$ is even.
\end{theorem}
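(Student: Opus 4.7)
The plan is to reduce the statement entirely to the binomial edge ideal case using Remark~\ref{main-rmk} and then quote the corresponding classification for $J_G$ from \cite{JAR}. Since $G$ is bipartite, Remark~\ref{main-rmk} supplies $\KK$-algebra automorphisms $\varPhi_1,\varPhi_2:S\to S$ with $\varPhi_1(J_G)=L_G$ and $\varPhi_2(J_G)=\mathcal{I}_G$.

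Because $\varPhi_1$ and $\varPhi_2$ are ring automorphisms of $S$, they preserve the minimal number of generators of any ideal, its height, and its set of minimal primes via the bijection $\p\mapsto\varPhi_i(\p)$; moreover, for each minimal prime $\p$ of an ideal $I$, the localization $(\varPhi_i(I))_{\varPhi_i(\p)}$ is isomorphic to $I_{\p}$, so the local complete intersection property at minimal primes is also preserved. Consequently, being almost complete intersection is transported along these isomorphisms: $L_G$ is almost complete intersection if and only if $J_G$ is if and only if $\mathcal{I}_G$ is. This collapses the two claimed equivalences into a single statement about $J_G$.

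With the reduction in hand, the theorem follows by invoking the classification of connected bipartite graphs whose binomial edge ideals are almost complete intersections from \cite{JAR}: for a connected bipartite graph $G$ that is not a path, $J_G$ is almost complete intersection exactly when $G$ is either obtained by joining two vertex-disjoint paths by a single edge, or obtained by adding an edge between two vertices of a single path so that the unique resulting cycle has even length. Substituting this into the equivalence of the previous paragraph yields the desired characterization for $L_G$ and $\mathcal{I}_G$.

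The main (and essentially only) substantive point is the first step: one must check that \emph{almost complete intersection} really is an isomorphism-invariant notion. Since that notion is phrased entirely in terms of $\mu$, $\h$, and local complete intersections at minimal primes, and since $\varPhi_1,\varPhi_2$ are graded $\KK$-algebra automorphisms of $S$, this verification is routine; no additional combinatorial analysis beyond what is already known for $J_G$ is needed for this theorem.
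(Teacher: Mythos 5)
Your proposal is correct and follows exactly the paper's own route: apply Remark~\ref{main-rmk} to transport the question from $L_G$ and $\mathcal{I}_G$ to $J_G$, then invoke the classification of almost complete intersection binomial edge ideals of connected bipartite graphs from \cite{JAR} (the paper cites Theorems~4.3 and~4.4 there). Your explicit verification that the almost complete intersection property is invariant under the graded automorphisms $\varPhi_1,\varPhi_2$ is a point the paper leaves implicit, but it is the same argument.
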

\begin{proof}
Since $G$ is a  bipartite graph, by Remark \ref{main-rmk}, $L_G =\varPhi_1(J_G)$ and $\mathcal{I}_G =\varPhi_2(J_G)$. Hence, the proof follows from \cite[Theorems 4.3, 4.4]{JAR}. 	
\end{proof}  
For $A \subseteq [n]$ and $i \in A$, we define
$p_A(i) = |\{j \in A \mid j \leq i\}|$.  We now give complete classification of odd unicyclic graphs whose LSS ideals and  parity binomial edge ideals are  almost complete intersections. 
\begin{theorem}\label{odd-unicyclic-aci}
	Let $G$ be a connected odd unicyclic graph on $[n]$. Assume that char$(\K)\neq 2$. Then $L_G$ is an almost complete intersection ideal if and only if $\mathcal{I}_G$ is  almost complete intersection  if and only if $G$ is one of the following types:
	\begin{enumerate}
		\item $G$ is obtained by adding an edge $e$ between an odd cycle and a path,
		\item $G$ is obtained by adding an edge $e$ between two vertices of a path such that girth of $G$ is odd and at least one of the vertex  is an  internal vertex of the path, 
		\item $G$ is obtained by attaching a path of length  $\geq 1$ to each vertex of a triangle.
	\end{enumerate}
\end{theorem}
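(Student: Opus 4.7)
The strategy is to combine the primary decomposition $L_G = \bigcap_{T \in \mathcal{C}(G)} Q_T(G)$ from \cite{her3} with the analogous decomposition of $\mathcal{I}_G$ from \cite{KCT}, together with the height formula $\h(Q_T(G)) = n + |T| - b_G(T)$. Since $G$ is a connected odd unicyclic graph on $[n]$, it has exactly $n$ edges, so $\mu(L_G) = \mu(\mathcal{I}_G) = n$, and
\[
\h(L_G) \;=\; n \;-\; \max_{T \in \mathcal{C}(G)}\bigl(b_G(T) - |T|\bigr).
\]
Hence $L_G$ (resp. $\mathcal{I}_G$) is almost complete intersection if and only if $\max_{T \in \mathcal{C}(G)}(b_G(T) - |T|) = 1$ together with the local complete intersection condition at each minimal prime. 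The equivalence between $L_G$ and $\mathcal{I}_G$ is handled using Remark \ref{rmk-parity}: when $\sqrt{-1} \in \K$ the isomorphism $\Psi \circ \eta$ directly identifies the two ideals, and the general characteristic-not-two case follows by faithfully flat base change to $\K(\sqrt{-1})$, which preserves $\mu$, height, and the local complete intersection property at minimal primes.

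For the sufficiency direction I would exhibit, for each type, a witness $T \in \mathcal{C}(G)$ achieving $b_G(T) - |T| = 1$. For type (1) take $T = \{u\}$ where $u$ is the cycle vertex incident to the attached path: $G \setminus u$ consists of an even path (cycle minus $u$) and the attached path, both bipartite. For type (2) take $T$ equal to either chord endpoint (which is internal in the original path), and removal again yields two bipartite pieces. For type (3) take $T = \{u\}$ for any triangle vertex, so that the other two triangle vertices together with their attached paths form a single bipartite path, disjoint from the bipartite path $u$ contributed. A finite case analysis over $T' \in \mathcal{C}(G)$, organized by how $T'$ intersects the cycle and the branches, rules out any $T'$ with $b_G(T') - |T'| \geq 2$. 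The local complete intersection condition at each minimal prime $Q_T(G)$ reduces, after localizing and examining the induced subgraphs on the connected components of $G[\bar T]$ (which are disjoint unions of paths or, at worst, an odd cycle), to the complete intersection classifications given in Theorems \ref{bipartite-ci} and \ref{nonbipartite-ci} applied to those subgraphs.

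For the necessity direction I would assume $G$ is odd unicyclic and not of any of the three types, and exhibit $T \in \mathcal{C}(G)$ with $b_G(T) - |T| \geq 2$, forcing $\h(L_G) \leq n - 2$ and hence contradicting the almost complete intersection hypothesis. The excluded configurations fall into: (a) a cycle of length $\geq 5$ carrying branches at three or more distinct cycle vertices, where any three such vertices give $T$; (b) a cycle of length $\geq 5$ carrying branches at exactly two cycle vertices that are non-adjacent on the cycle, where those two vertices give $T$; (c) some branch attached to a cycle vertex that is not a path, in which case a branching vertex of that tree together with the adjacent cycle vertex furnishes $T$; and (d) the triangle case with a branch that is not a path, handled by the same branching-vertex device. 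The main obstacle will be the verification of the local complete intersection condition at every minimal prime $Q_T(G)$: while the height computation is purely combinatorial via \cite{her3}, checking that $(L_G)_{Q_T(G)}$ is a complete intersection of length $\h(Q_T(G))$ requires a careful inspection of the generators of $L_G$ after localization, identifying which generators become redundant so that the survivors form a regular sequence of the correct length.
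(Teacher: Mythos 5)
Your framework (the decompositions of \cite{her3} and \cite{KCT}, the height formula, and the reduction of the numerical condition to $\max_{T\in\mathcal{C}(G)}(b_G(T)-|T|)=1$ plus local complete intersection at minimal primes) is the same as the paper's, and your faithfully flat base change to $\K(\sqrt{-1})$ is a legitimate, arguably cleaner, alternative to the paper's device of simply re-running the $\p_T^{\sigma}(G)$ argument with $Q_T(G)$ when $\sqrt{-1}\notin\K$. But there are two concrete problems. First, in the necessity direction your witnesses are adjacency-sensitive and fail in sub-cases: in your configuration (a), if the three branch vertices are consecutive on the cycle, taking $T$ to be all three yields one arc plus three branches, so $b_G(T)-|T|=4-3=1$, not $\geq 2$ (the correct witness is a non-adjacent pair among them); similarly in (c), an \emph{adjacent} pair of degree-3 vertices gives $b_G(T)-|T|=1$, and indeed such adjacent configurations can be of type $(1)$ and admissible. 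The paper organizes the exclusion by degrees instead, which avoids these traps: $\deg_G(u)\leq 3$ for all $u$ (else $\{u\}$ gives $b_G(\{u\})\geq 3$), any two degree-3 vertices must be adjacent (else the pair gives $b_G(T)=4$, $|T|=2$), and three pairwise adjacent degree-3 vertices force a triangle, yielding type $(3)$.

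The serious gap is in sufficiency, at exactly the point you defer as ``the main obstacle'': the local complete intersection verification. Your proposed reduction is not correct as stated — localizing at $Q_T(G)$ does not discard the generators meeting $T$ or reduce the question to the induced subgraphs on the components of $G[\bar{T}]$, since $x_i,y_i$ for $i\in T$ lie in the prime and the generators involving them remain non-units and non-redundant a priori. The paper's actual mechanism for type $(3)$ is a syzygy argument: the claw relation $(-1)^{p_A(v)}f_{z,w}\bar{g}_{u,v}+(-1)^{p_A(z)}f_{v,w}\bar{g}_{u,z}+(-1)^{p_A(w)}f_{v,z}\bar{g}_{u,w}=0$ places $f_{z,w},f_{v,w},f_{v,z}$ among the entries of the presentation matrix; for each minimal prime of height $n-1$ one of these entries avoids the prime, so \cite[Lemma 1.4.8]{bh} gives $\mu((\mathcal{I}_G)_{\p_T^{\sigma}(G)})\leq n-1$, while Krull's height theorem \cite[Theorem 13.5]{hm} gives the reverse inequality, forcing local complete intersection. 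Moreover, for types $(1)$ and $(2)$ the per-prime check you envisage is unnecessary: there $G\setminus e$ is a disjoint union of paths and an odd cycle, so $L_{G\setminus e}$ and $\mathcal{I}_{G\setminus e}$ are complete intersections (Corollary \ref{graph-ci}) and radical since char$(\K)\neq 2$, whence $L_{G\setminus e}:g_e=L_{G\setminus e}:g_e^2$ and \cite[Theorem 4.7(ii)]{HMV89} yields almost complete intersection directly (with Lemma \ref{colon-non-bipartite} controlling the colon). Note this shortcut is unavailable precisely for type $(3)$, where $G\setminus e$ is a tree with an internal degree-3 vertex, hence $L_{G\setminus e}$ is not a complete intersection — which is why the syzygy argument is indispensable there and must appear in any complete proof.
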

\begin{proof}
First, assume that $\mathcal{I}_G$ is an almost complete intersection ideal. Therefore, $\h(\mathcal{I}_G)=\mu(\mathcal{I}_G)-1=n-1$.    We claim that $\deg_G(u)\leq 3$, for every $u \in V(G)$. Let if possible, there exist a vertex $u$ such that  $\deg_G(u) \geq 4$. Then $\{u\} \in \mathcal{C}(G)$ and $b_G(\{u\}) \geq 3$. Therefore, $\h(\p_{\{u\}}^{\sigma}(G))=n+1-b_G(\{u\})\leq n-2$, which is a contradiction. Hence, $\deg_G(u) \leq 3$ for all $u \in V(G)$.  Now, let $u,v \in V(G)$ be distinct vertices such that  $\deg_G(u) =3$ and $\deg_G(v)=3$.
If $\{u,v\} \notin E(G)$, then for $T=\{u,v\} $,  $T \in \mathcal{C}(G)$ and $b_G(T) = 4$. Consequently,
$\h(\p_T^{\sigma}(G))= n+|T|-b_G(T) = n-2$ which conflicts the fact that $\h(\mathcal{I}_G)=n-1$. 	
Thus, if
two vertices  have degree three, then they are adjacent. If the 
number of vertices of  degree three is at most $2$, then $G$ is either of type $(1)$ or type $(2)$. If  the number of vertices of degree three is more than two, then the odd cycle has length $3$, each vertex of the cycle has degree   three and these are only vertices with degree three.   Hence, $G$ is of type $(3)$. 
	
Now, we assume that $L_G$ is an almost complete intersection ideal. Therefore, $\h(L_G)=\mu(L_G)-1=n-1$.  Suppose $\sqrt{-1} \in \K$, then by Remark \ref{rmk-parity}, $\mathcal{I}_G$ is  almost complete intersection and hence, we are done.  Suppose $\sqrt{-1} \notin \K$,  then we claim that $\deg_G(u)\leq 3$, for every $u \in V(G)$. If not, then  there is a vertex $u$ such that  $\deg_G(u) \geq 4$. Clearly, $\{u\} \in \mathcal{C}(G)$ and $b_G(\{u\}) \geq 3$. Therefore, $\h(Q_{\{u\}}(G))=n+1-b_G(\{u\})\leq n-2$, which conflicts the fact that $\h(L_G)=n-1$. Hence $\deg_G(u) \leq 3$.  Now, let  $u ,v \in V(G)$  such that  $\deg_G(u) =3$, $\deg_G(v)=3$ and $u \neq v$.
If $\{u,v\} \notin E(G)$, then for $T=\{u,v\} $,  $T \in \mathcal{C}(G)$ and $b_G(T) =4$. Therefore,
$\h(Q_T(G))= n+|T|-b_G(T) = n-2$ which is a contradiction. 	
Now, the proof is in the same lines as the proof for $\mathcal{I}_G$.

Conversely, if $G$ is either of type $(1)$ or of type $(2)$, then $L_G = L_{G \setminus e}+ (g_e)$  and $\mathcal{I}_G= \mathcal{I}_{G\setminus e}+(\bar{g_e})$. By Corollary \ref{graph-ci}, $L_{G \setminus e}$ and $\mathcal{I}_{G\setminus e}$ are complete intersections.  Since char$(\K) \neq 2$,  $L_{G\setminus e}$ and $\mathcal{I}_{G\setminus e}$ are radical ideal. Therefore, $L_{G \setminus e}:g_e = L_{G \setminus e} : g_e^2$ and $\mathcal{I}_{G \setminus e}:\bar{g_e} = \mathcal{I}_{G \setminus e} : \bar{g_e}^2$.  Hence, by \cite[Theorem 4.7(ii)]{HMV89} and Theorem \ref{nonbipartite-ci}, the assertion follows.

Now, we assume that $G$ is of type $(3)$. Let $u,v,w $ be the vertices of the cycle and $T \in \mathcal{C}(G)$. We claim that  $\{u,v,w\} \cap T= \emptyset$ if and only if $\h(\p_T^{\sigma}(G))=n$. First assume that $\h(\p_T^{\sigma}(G))=n$. Since $\h(\p_T^{\sigma}(G))=n+|T|-b_G(T)=n$, we have $|T|=b_G(T)$. If possible, let $\{u,v,w\} \cap T \neq \emptyset$. Without loss of generality, we may assume that $u \in T$. One can note that $T\setminus \{u\} \in \mathcal{C}(G\setminus u)$ and $b_{G\setminus u}(T\setminus \{u\})=b_G(T)$. Since $G\setminus u$ is disjoint union of two paths, $\mathcal{I}_{G\setminus u}$ is complete intersection  and therefore, $\h(\mathcal{I}_{G\setminus u})=n-3=\h(\p_{T\setminus \{u\}}^{\sigma}(G\setminus u))=n-1+|T\setminus \{u\}|-b_{G\setminus u}({T\setminus \{u\}})$. Thus, we have $|T|=b_{G\setminus u}(T\setminus \{u\})-1=b_G(T)-1$, which is a contradiction.   Conversely, if $\{u,v,w\} \cap T=\emptyset $ and $T \neq \emptyset$, then every element of $T$ has degree two in $G$ and for every pair $u',v' \in T$, $\{u',v'\} \notin E(G)$. Thus, by deleting each of the elements of $T$ increases the number of bipartite connected components of the corresponding graph by one and hence, $b_G(T)=|T|$. From the proof of the claim, we observe that $\h(\mathcal{I}_G)=n-1=\mu(\mathcal{I}_G)-1$. Let $T \in \mathcal{C}(G)$ such that $\h(\p_T^{\sigma}(G)) = n-1$. Then $\{u,v,w\} \cap T \neq \emptyset$ and $\{u,v,w\} \not\subset T$. We may assume that $u \in T$. Let $N_G(u)=\{v,w,z\}$. Since $T \in \mathcal{C}(G)$, $z \notin T$. Note that $A=\{u,v,w,z\}$ forms a claw in $G$ with center $u$ and  $(-1)^{p_A(v)}f_{z,w}\bar{g}_{{u,v}} + (-1)^{p_A(z)}f_{v,w}\bar{g}_{{u,z}} + (-1)^{p_A(w)}f_{v,z}\bar{g}_{{u,w}}=0.$ 
The minimal presentation of $\mathcal{I}_G$ is \[S^{\beta_2(S/\mathcal{I}_G)} \stackrel{\phi}{\longrightarrow} S^n \longrightarrow \mathcal{I}_G \longrightarrow 0 .\] Therefore, $f_{z,w}, f_{v,w}, f_{v,z} \in I_1(\phi)$, where $I_1(\phi)$ is an ideal generated by entries of the matrix $\phi$. Since $z \notin T$ and $\{u,v,w\} \not\subset T$, we have  $I_1(\phi) \not\subset \p_T^{\sigma}(G)$. Consequently, by \cite[Lemma 1.4.8]{bh}, $\mu((\mathcal{I}_G)_{\p_T^{\sigma}(G)}) \leq n-1$. As $\h((\mathcal{I}_G)_{\p_T^{\sigma}(G)}) =n-1$, by \cite[Theorem 13.5]{hm}, $\mu((\mathcal{I}_G)_{\p_T^{\sigma}(G)})\geq n-1$. Hence, $(\mathcal{I}_G)_{\p_T^{\sigma}(G)}$ is  complete intersection. Now, if $T \in \mathcal{C}(G)$ such that $\h(\p_T^{\sigma}(G)) =n$, then  it follows from  \cite[Theorem 13.5]{hm}, $\mu((\mathcal{I}_G)_{\p_T^{\sigma}(G)})\geq n
$. Since $\mu((\mathcal{I}_G)_{\p_T^{\sigma}(G)}) \leq \mu(\mathcal{I}_G)=n$, we have  $\mu((\mathcal{I}_G)_{\p_T^{\sigma}(G)})= n$. Hence, $(\mathcal{I}_G)_{\p_T^{\sigma}(G)}$ is  complete intersection.	

Suppose  $\sqrt{-1} \in \K$, then  $\mathcal{I}_G$ is  almost complete intersection and hence, $L_G$ is almost complete intersection, by Remark \ref{rmk-parity}. It remains to  prove that if $G$ is of type $(3)$ and  $\sqrt{-1}  \notin \K$, then $L_G$ is an almost complete intersection ideal. The proof is in the same lines as the proof for $\mathcal{I}_G$ by replacing $\mathfrak{p}^{\sigma}_T(G)$ by $Q_T(G)$.	
\end{proof}	
One can observe that if $G$ is a connected non-bipartite graph, then $\p^{+}(G)$ is a minimal prime of $\mathcal{I}_G$. Therefore, $\h(\mathcal{I}_G)\leq \h(\p^{+}(G))=n$. If $\sqrt{-1} \notin \K$, char$(\K) \neq 2$ and $G$ is a non-bipartite graph, then  $I_{K_n}$ is one of the minimal primes of $L_G$. Therefore, $\h(L_G)\leq \h(I_{K_n})=n$. If $G$ is a connected non-bipartite graph such that $L_G$ or $\mathcal{I}_G$ is almost complete intersection, then $n \leq |E(G)| \leq n+1$. We now assume that $G$ is connected non-bipartite graph other than odd unicyclic graph, i.e. $|E(G)| = n+1$. So, $G$ is  obtained by  adding an edge in a unicyclic graph. First, we give  classification of a connected non-bipartite bicyclic cactus graph whose LSS ideals and  parity binomial edge ideals are almost complete intersections.
\begin{theorem}\label{bicycilc-aci}
Let $G$ be a connected non-bipartite bicyclic cactus graph on $[n]$. Assume that  char$(\K) \neq  2$.	Then $L_G$ is almost complete intersection if and only if $\mathcal{I}_G$ is almost complete intersection if and only if $G$ is obtained by adding an edge $e$ between two disjoint odd cycles.	
\end{theorem}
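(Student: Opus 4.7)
The plan is to mirror the strategy of Theorem~\ref{odd-unicyclic-aci}. By Remark~\ref{rmk-parity}, when $\sqrt{-1}\in\K$ the almost complete intersection properties of $L_G$ and $\mathcal{I}_G$ coincide, so it suffices to treat $\mathcal{I}_G$; when $\sqrt{-1}\notin\K$, the argument for $L_G$ is parallel with $\p_T^{\sigma}(G)$ replaced by $Q_T(G)$ throughout. Since $G$ is a connected non-bipartite bicyclic cactus on $[n]$, $|E(G)|=n+1$, so $\mu(\mathcal{I}_G)=n+1$; as $\mathfrak{p}^+(G)$ is a minimal prime of height $n$, $\h(\mathcal{I}_G)\leq n$. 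The almost complete intersection hypothesis therefore forces $\h(\mathcal{I}_G)=n$, equivalently $|T|\geq b_G(T)$ for every $T\in\mathcal{C}(G)$.

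For the forward direction, I first recover the two degree-constraints from Theorem~\ref{odd-unicyclic-aci}: every vertex of $G$ has degree at most $3$, and any two distinct degree-$3$ vertices must be adjacent (otherwise $T=\{v\}$ or $T=\{u,v\}$ yields a member of $\mathcal{C}(G)$ violating $|T|\geq b_G(T)$). Translated to the bicyclic cactus structure these force: (i) the two cycle-blocks cannot share a vertex, as that vertex would have degree $4$; (ii) the unique path joining the two cycle-blocks has length exactly one, else its two endpoints on the cycles form a non-adjacent pair of degree-$3$ vertices; (iii) no pendant edge or tree branch may be attached, since any attachment produces a degree-$3$ vertex $w\notin\{u,v\}$, and since $V(C_a)\cap V(C_b)=\emptyset$ and the only edge between the cycles is $e$, such $w$ must fail to be adjacent to at least one of the bridge endpoints $u,v$; and (iv) both cycles must be odd, since an even cycle $C_b$ makes $T=\{u\}$ satisfy $b_G(T)=2>1=|T|$ for $u$ the bridge endpoint in $C_a$. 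Combining (i)--(iv), $G$ is exactly two vertex-disjoint odd cycles joined by a single edge.

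For the converse, let $G=C_a\cup\{e\}\cup C_b$ with $a,b$ odd and $e=\{u,v\}$, so that $\mathcal{I}_G=\mathcal{I}_{C_a}+\mathcal{I}_{C_b}+(\bar{g}_e)$. Since $\mathcal{I}_{C_a}$ and $\mathcal{I}_{C_b}$ are complete intersections (Corollary~\ref{graph-ci}) in polynomial rings on disjoint variable sets, $\mathcal{I}_{C_a}+\mathcal{I}_{C_b}$ is a complete intersection of height $n$, whence $\h(\mathcal{I}_G)=n=\mu(\mathcal{I}_G)-1$. To verify the local complete intersection condition at a minimal prime $\p_T^{\sigma}(G)$, I split on $\h(\p_T^{\sigma}(G))\in\{n,n+1\}$. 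When $\h(\p_T^{\sigma}(G))=n+1$, the chain $n+1=\h(\p_T^{\sigma}(G))\leq\mu((\mathcal{I}_G)_{\p_T^{\sigma}(G)})\leq\mu(\mathcal{I}_G)=n+1$ (using Krull's height bound via \cite[Theorem~13.5]{hm}) forces $(\mathcal{I}_G)_{\p_T^{\sigma}(G)}$ to be complete intersection. When $\h(\p_T^{\sigma}(G))=n$, I would exploit the Pl\"ucker-type syzygy at the bridge endpoint $u$ with cycle-neighbours $u',u''$, namely
\[
(-1)^{p_A(u')}f_{u'',v}\bar{g}_{u,u'}+(-1)^{p_A(u'')}f_{u',v}\bar{g}_{u,u''}+(-1)^{p_A(v)}f_{u',u''}\bar{g}_{u,v}=0,
\]
where $A=\{u,u',u'',v\}$, mirroring the type-$(3)$ argument of Theorem~\ref{odd-unicyclic-aci}, and invoke \cite[Lemma~1.4.8]{bh} together with \cite[Theorem~13.5]{hm} to conclude $\mu((\mathcal{I}_G)_{\p_T^{\sigma}(G)})=n$.

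The principal obstacle is the converse's height-$n$ localisation step: one must check that for every such minimal prime at least one of the three $f$-elements $f_{u',u''},f_{u',v},f_{u'',v}$ (or the three symmetric ones centred at $v$) lies outside $\p_T^{\sigma}(G)$, which requires a case-analysis of how $T$ and the signs $\sigma$ interact with the bridge vertices, including the triangle cases $a=3$ or $b=3$ where the claw centred at a bridge endpoint is not induced. Exhausting the possible pendant-tree configurations in forward step (iii) will also need attention but is essentially routine.
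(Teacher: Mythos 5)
Your forward direction is, in substance, the paper's own argument: the paper likewise rules out a shared cut vertex, a connecting path of length $\geq 2$, an even cycle, and extra degree-$3$ vertices by exhibiting for each configuration a set $T \in \mathcal{C}(G)$ with $\h(\p_T^{\sigma}(G)) \leq n-1$, and handles $L_G$ either via Remark \ref{rmk-parity} (when $\sqrt{-1} \in \K$) or by the same argument with $Q_T(G)$. Your repackaging through the two degree-constraints is fine modulo the routine checks you flag (one does need $b_G(T) \geq 3$ for each non-adjacent degree-$3$ pair, which holds in a bicyclic cactus). Also, your worry about the triangle cases $a=3$, $b=3$ is a non-issue: the three-term identity among $\bar{g}_{u,u'}, \bar{g}_{u,u''}, \bar{g}_{u,v}$ holds whether or not $\{u',u''\}$ is an edge, since it only requires the three edges at $u$.

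The converse, however, has a genuine gap, and it sits exactly where you placed your ``principal obstacle.'' The prime $T=\emptyset$ gives minimal primes $\p^{\pm}(G)=(x_i \pm y_i : i \in [n])$ (the sign-split condition is vacuous for $T = \emptyset$) of height $n = \mu(\mathcal{I}_G)-1$, so this is a height-$n$ case your localisation step must handle. But modulo $x_i \equiv \mp y_i$ every $f_{i,j}=x_iy_j-x_jy_i$ vanishes, so $f_{u',u''}, f_{u',v}, f_{u'',v} \in \p^{\pm}(G)$, and the Koszul coefficients $\bar{g}_{i,j} \in \mathcal{I}_G \subset \p^{\pm}(G)$ as well; hence the claw relations at the bridge endpoints (and their symmetric counterparts at $v$) can never show $I_1(\varphi) \not\subset \p^{\pm}_{\emptyset}(G)$, and \cite[Lemma 1.4.8]{bh} gets no purchase. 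To salvage your route you would need a relation whose coefficients avoid $\p^{\pm}(G)$ --- for instance a monomial-coefficient relation coming from an even closed walk traversing both odd cycles and the bridge twice, in the spirit of the cycle syzygy the paper deploys precisely for the $T=\emptyset$ case in Theorem \ref{nonbipartite-even-aci} --- but you neither state nor prove such a relation. The paper avoids the prime-by-prime analysis altogether: since char$(\K) \neq 2$, the complete intersection $\mathcal{I}_{G\setminus e}$ (respectively $L_{G \setminus e}$) is radical, hence $\mathcal{I}_{G\setminus e}:\bar{g}_e = \mathcal{I}_{G\setminus e}:\bar{g}_e^{\,2}$, and \cite[Theorem 4.7(ii)]{HMV89} then yields the almost complete intersection property in one stroke --- the same device you could simply import here in place of your entire converse analysis.
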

\begin{proof}
First, assume that $\mathcal{I}_G$ is almost complete intersection. Therefore, $\h(\mathcal{I}_G)=\mu(\mathcal{I}_G)-1=n$.	We claim that the distance between the two cycles is $\geq 1$. If both cycles share a common vertex say $v$, then $\{v\} \in \mathcal{C}(G)$, $b_G(\{v\}) \geq 2$ and $\h(\p_{\{v\}}^{\sigma}(G)) \leq n-1$, which is not possible as $\h(\mathcal{I}_G)=n$. Now, we claim that both cycles of $G$ are odd cycles.  Let $u$ be the vertex of an odd cycle and $v$ be the vertex of  another cycle such that $d(u,v)$ is the distance between the two cycles. Clearly, $\{u\} \in \mathcal{C}(G)$. If $v$ is the vertex of an even cycle, then  $b_G(\{u\}) \geq 2$. So,  $\h(\p_{\{u\}}^{\sigma}(G))\leq n-1$ which is not possible. Thus, both  cycles of $G$ are odd cycles. If $d(u,v)\geq 2$, then $T=\{u,v\} \in \mathcal{C}(G)$ and $b_G(T) \geq 3$. Consequently, $\p_T^{\sigma}(G)$ is a minimal prime of $ \mathcal{I}_G$ and $\h(\p_T^{\sigma}(G)) \leq n-1$, which conflicts the fact that $\h(\mathcal{I}_G)=n$. Hence, $\{u,v\} \in E(G)$. Let $w \in V(G)\setminus \{u,v\}$. One can note that either $\{u,w\} \notin E(G)$ or $\{v,w\} \notin E(G)$. If $\deg_G(w) \geq 3$, then either $T=\{u,w\} \in \mathcal{C}(G)$ or $T=\{v,w\} \in \mathcal{C}(G)$. In either case $b_G(T) \geq 3$ which implies that $\h(\p_T^{\sigma}(G)) \leq n-1$. Therefore, $\deg_G(w) =2$, for $w \in V(G)\setminus \{u,v\}$. Hence, $G$ is obtained by adding an edge $e$ between two disjoint odd cycles.		
	
Now, assume that $L_G$ is almost complete intersection.  Suppose $\sqrt{-1} \in \K$, then by Remark \ref{rmk-parity}, $\mathcal{I}_G$ is  almost complete intersection and hence, $G$ satisfies the hypothesis.  Suppose $\sqrt{-1} \notin \K$, then the proof is in the same lines as the proof for $\mathcal{I}_G$.

Conversely, if $G$ is obtained by adding an edge $e$ between two disjoint odd cycles, then $L_G = L_{G \setminus e}+ (g_e)$ and $\mathcal{I}_G = \mathcal{I}_{G \setminus e}+ (\bar{g_e})$. By virtue of Corollary \ref{graph-ci}, both $L_{G \setminus e}$ and $\mathcal{I}_{G\setminus e}$ are complete intersections. As char$(\K) \neq 2$,  $L_{G\setminus e}$ and $\mathcal{I}_{G\setminus e}$ are radical ideal. Therefore, $L_{G \setminus e}:g_e = L_{G \setminus e} : g_e^2$ and $\mathcal{I}_{G \setminus e}:\bar{g_e} = \mathcal{I}_{G \setminus e} : \bar{g_e}^2$.  Hence, by \cite[Theorem 4.7(ii)]{HMV89} and Theorem \ref{nonbipartite-ci}, the assertion follows.
\end{proof}	
We now  consider the case when $G$ is a connected non-bipartite  graph on $[n]$ with $|E(G)|=n+1$ and it is  not a bicyclic cactus graph. Therefore, $G$ is obtained from a unicyclic graph $H$ on $m$ vertices by attaching a path $P_{n-m+2}$ between two distinct vertices of the unique cycle of $H$. More preciesly, let $H$ be a unicyclic graph on $m$ vertices and $u,v$  distinct vertices of the unique cycle of $H$. Let $G$ be the graph obtained from $H$ by attaching one end vertex  of $P_{n-m+2}$ at $u$ and another end vertex at $v$.  Note that $T=\{u,v\} \in \mathcal{C}(G)$. If $n-m>0$, then $b_G(T) \geq 3$ and therefore, $\h(\p_T^{\sigma}(G)) \leq n-1$ and $\h(Q_T(G)) \leq n-1$. Also, if $\deg_G(u) \geq 4$, then $u$ is a cut vertex of $G$ with $b_G(\{u\}) \geq 2$. So, $\{u\} \in \mathcal{C}(G)$ and $\h(\p_{\{u\}}^{\sigma}(G)) \leq n-1$, $\h(Q_{\{u\}}(G)) \leq n-1$. Similarily, if $\deg_G(v) \geq 4$, then $\{v\} \in \mathcal{C}(G)$ and  $\h(\p_{\{v\}}^{\sigma}(G)) \leq n-1$, $\h(Q_{\{v\}}(G)) \leq n-1$. Thus, if $\mathcal{I}_G$ or $L_G$ is almost complete intersection, then $n=m$ and $\deg_G(u) =\deg_G(v)=3$, i.e. $G$ is obtained by adding a chord $e =\{u,v\}$ in a unicyclic graph $H$ on $[n]$ such that $\deg_H(u)=\deg_H(v)=2$.
\begin{theorem}\label{non-bipartite-odd-aci}
Let $G$ be a connected graph which is obtained by adding a chord $e=\{u,v\}$ in an odd unicyclic graph $H$ such that $\deg_H(u)=\deg_H(v)=2$. Assume that  char$(\K) \neq  2$. Then $L_G$ is almost complete intersection if and only if   $\mathcal{I}_G$ is almost complete intersection if and only if $H$ is  an odd cycle.
\end{theorem}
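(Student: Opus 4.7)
For sufficiency, assume $H = C_n$ is an odd cycle, so $G = H + e$ where $e = \{u,v\}$ is the chord. Then $\mathcal{I}_G = \mathcal{I}_{G \setminus e} + (\bar{g}_e)$ and $L_G = L_{G \setminus e} + (g_e)$, with $G \setminus e = C_n$. By Theorem~\ref{nonbipartite-ci}, $\mathcal{I}_{C_n}$ and $L_{C_n}$ are complete intersections; since $\mathrm{char}(\K) \neq 2$ these ideals are radical, so $\mathcal{I}_{C_n}:\bar{g}_e = \mathcal{I}_{C_n}:\bar{g}_e^{2}$ and $L_{C_n}:g_e = L_{C_n}:g_e^{2}$. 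Applying \cite[Theorem 4.7(ii)]{HMV89}, exactly as in the converse parts of Theorems~\ref{odd-unicyclic-aci} and \ref{bicycilc-aci}, yields that both $\mathcal{I}_G$ and $L_G$ are almost complete intersections.

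For necessity I prove the contrapositive: if the odd unicyclic graph $H$ is not a cycle, then $\h(\mathcal{I}_G) < n$ and $\h(L_G) < n$, contradicting the almost complete intersection assumption (which forces $\mu-\h=1$ and hence $\h=n$, since $|E(G)| = n+1$). Since $H$ is not a cycle it carries a tree branch, hence admits a vertex $w$ with $\deg_H(w) \geq 3$; the hypothesis $\deg_H(u) = \deg_H(v) = 2$ forces $w \notin \{u,v\}$ and $\deg_G(w) \geq 3$. I would then produce $T \in \mathcal{C}(G)$ with $b_G(T) > |T|$. The associated prime $\mathfrak{p}_T^{\sigma}(G)$ (resp.\ $Q_T(G)$) has height $n+|T|-b_G(T) < n$, and because all components of $G[\bar{T}]$ will be bipartite, the sign-split requirement from \cite[Theorem 4.15]{KCT} is vacuous, so it is indeed a minimal prime of $\mathcal{I}_G$ (resp.\ of $L_G$, using \cite[Theorem 5.2]{her3}).

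To construct such a $T$, I pick one of $\{w\}$, $\{u,w\}$, $\{v,w\}$ according to the configuration: deleting $w$ breaks the odd cycle of $H$ into a path and detaches each tree branch rooted at $w$, and additionally deleting $u$ or $v$ destroys the chord and further splits the remaining path. In each of the possible situations --- $u,v$ both on the cycle of $H$, exactly one of them on the cycle, or $w$ lying in a tree branch --- at least one of these three choices produces a $G[\bar{T}]$ whose components are all bipartite and satisfy $c_G(T) = b_G(T) \geq |T|+1$. The main obstacle is the case analysis itself: verifying $T \in \mathcal{C}(G)$ at the borderline triangle configuration, when $u$ and $v$ are both adjacent to $w$, requires invoking the bipartition-vertex criterion rather than the more intuitive cut-vertex criterion. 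The correct choice among $\{w\}, \{u,w\}, \{v,w\}$ is ultimately dictated by the parity of the chord-cycle formed by $e$ together with one of the two arcs of the odd cycle of $H$.
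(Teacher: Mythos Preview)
Your proposal is correct and follows the same approach as the paper's proof, phrased contrapositively. The sufficiency direction matches exactly. For necessity, the paper also produces a small-height prime by taking $T=\{w\}$ when $w$ lies on the induced \emph{odd} cycle of $G$ and $T=\{u,w\}$ (or $\{v,w\}$, choosing whichever of $u,v$ is non-adjacent to $w$) when $w$ lies on the induced \emph{even} cycle---precisely the parity dichotomy you identify in your final sentence.

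Two simplifications would tighten your argument to the paper's. First, the case distinction ``$u,v$ both on the cycle of $H$ / exactly one on the cycle'' is moot: the word \emph{chord} (and the preceding discussion in the paper) forces both $u$ and $v$ to lie on the unique cycle of $H$, so only one of your listed situations actually occurs. Second, the case ``$w$ lying in a tree branch'' can be avoided entirely: if $H$ is not a cycle, some branch attaches at a cycle vertex $c$, whence $\deg_H(c)\ge 3$, so you may always choose your high-degree witness $w$ on the cycle. With these reductions your case analysis collapses to exactly the paper's two cases, and the ``borderline triangle'' you flag is absorbed into the odd-cycle case (where $T=\{w\}$ works with $w$ a genuine cut vertex), so no special appeal to the bipartition-vertex criterion is needed there.
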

\begin{proof}
First, we assume that $\mathcal{I}_G$ is almost complete intersection. Consequently, $\h(\mathcal{I}_G)=\mu(\mathcal{I}_G)-1 =n$. If $w \notin \{u,v\}$ is a vertex of an induced odd cycle of $G$ such that $\deg_G(w) \geq 3$, then  $\{w\} \in \mathcal{C}(G)$, $b_G(\{w\})\geq 2$ and hence,  $\h(\mathcal{I}_G)\leq \h(\p_{\{w\}}^{\sigma}(G))\leq n -1$. Now, if $w \notin \{u,v\}$ is a vertex of an even induced cycle such that $\deg_G(w) \geq 3$, then either $\{u,w\} \notin E(G)$ or $\{v,w\} \notin E(G)$. Assume that $\{u,w\} \notin E(G)$. Therefore,  $T= \{u,w\} \in \mathcal{C}(G)$, $b_G(T) \geq 3$ and hence, $\p_T^{\sigma}(G)$ is a minimal prime of  $\mathcal{I}_G$ with $\h(\mathcal{I}_G)\leq \h(\p_T^{\sigma}(G))\leq n -1$. We have a contradiction in each case. Thus, $\deg_G(w)=2$, for $w \in V(G) \setminus \{u,v\}$. 	Hence, $H$ is an odd cycle.	
	
Now, assume that $L_G$ is almost complete intersection.  Suppose $\sqrt{-1} \in \K$, then by Remark \ref{rmk-parity}, $\mathcal{I}_G$ is  almost complete intersection and hence, $H$ is an odd cycle.  Suppose $\sqrt{-1} \notin \K$, then the proof is in the same lines as the proof for $\mathcal{I}_G$.

 Conversely, if $G$ is obtained by adding a chord $e$  in an odd cycle $H$, then $L_G = L_{H}+ (g_e)$ and $\mathcal{I}_G = \mathcal{I}_{H}+ (\bar{g_e})$. By Theorem \ref{nonbipartite-ci}, $L_H$ and $\mathcal{I}_H$ are complete intersections. Since char$(\K) \neq 2$,  $L_{H}$ and $\mathcal{I}_{H}$ are radical ideal. Therefore, $L_{H}:g_e = L_{H} : g_e^2$ and $\mathcal{I}_{H}:\bar{g_e} = \mathcal{I}_{H} : \bar{g_e}^2$.  Hence, the assertion follows from \cite[Theorem 4.7(ii)]{HMV89} and Corollary \ref{graph-ci}. 
\end{proof}	
\begin{theorem}\label{nonbipartite-even-aci}
	Let $G$ be a connected non-bipartite graph which  is obtained by adding a chord $e=\{u,v\}$ in an even unicyclic graph $H$ such that $\deg_H(u)=\deg_H(v)=2$. Then $L_G$ is almost complete intersection  if and only if $\mathcal{I}_G$ is almost complete intersection  if and only if $H$ is one of the following:
	\begin{enumerate}
		\item $H$ is  an even cycle, 
		\item $H$ is obtained by attaching  a path to a vertex $i$ of an even cycle such that $\{u,i\}, \{v,i\}$ are edges of the even cycle.
	\end{enumerate}
\end{theorem}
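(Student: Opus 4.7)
The plan mirrors the strategy already deployed in Theorems~\ref{odd-unicyclic-aci} and~\ref{non-bipartite-odd-aci}: first deduce structural constraints on $H$ from the height identity $\h(\mathcal{I}_G) = n$ via the minimal prime formula $\h(\mathfrak{p}_T^{\sigma}(G)) = n + |T| - b_G(T)$, and then verify the converse by producing syzygies sufficient to certify local complete intersection at every minimal prime.

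For the necessity, suppose $\mathcal{I}_G$ is almost complete intersection. Since $|E(G)| = n+1$, we have $\mu(\mathcal{I}_G) = n+1$, and the paragraph preceding the theorem yields $\h(\mathcal{I}_G) = n$ together with $\deg_G(u) = \deg_G(v) = 3$. The inequality $\h(\mathfrak{p}_T^{\sigma}(G)) \geq n$ for every minimal prime rewrites as $b_G(T) \leq |T|$ for every $T \in \mathcal{C}(G)$. Taking $T = \{w\}$ for a vertex $w$ with $\deg_G(w) \geq 4$ forces $b_G(\{w\}) \geq 2$, a contradiction, so every vertex has degree at most $3$. If $w \in V(G) \setminus \{u, v\}$ has degree $3$ but fails to be adjacent to both $u$ and $v$, say $\{u, w\} \notin E(G)$, then $T = \{u, w\} \in \mathcal{C}(G)$ satisfies $b_G(T) \geq 3 > 2 = |T|$, again a contradiction. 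Hence every degree-$3$ vertex outside $\{u, v\}$ is adjacent to both $u$ and $v$; a similar argument with $T$ containing two such vertices rules out more than one, and forces the pendant subtree at such a vertex $i$ to be a single path. This pins $H$ to the two listed shapes.

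For the converse, realise $G$ explicitly: in Case~(1) as the theta graph $\Theta(P_1, P_2, \{e^*\})$ with both arcs of even length at least $2$, and in Case~(2) as this theta graph with $P_1$ the length-$2$ arc having middle vertex $i$, carrying a pendant path rooted at $i$. A direct analysis of the minimal primes $\mathfrak{p}_T^{\sigma}(G)$ shows that their minimum height equals $n$, so $\mu(\mathcal{I}_G) - \h(\mathcal{I}_G) = 1$. For local complete intersection at a minimal prime $\mathfrak{p} = \mathfrak{p}_T^{\sigma}(G)$ of height $n$, in Case~(1) the set $A = \{u, p_1, q_1, v\}$ is an induced claw centred at $u$ because both arcs have length at least $2$, so $p_1, q_1, v$ are pairwise non-adjacent. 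The claw syzygy
\[
(-1)^{p_A(p_1)} f_{q_1, v}\, \bar{g}_{u, p_1} + (-1)^{p_A(q_1)} f_{p_1, v}\, \bar{g}_{u, q_1} + (-1)^{p_A(v)} f_{p_1, q_1}\, \bar{g}_{u, v} = 0
\]
places $f_{p_1, q_1}$, $f_{p_1, v}$, $f_{q_1, v}$ as entries of the minimal presentation matrix $\phi$. For each such $\mathfrak{p}$, at least one of these $f$-entries escapes $\mathfrak{p}$, so $I_1(\phi) \not\subset \mathfrak{p}$, and \cite[Lemma 1.4.8]{bh} together with \cite[Theorem 13.5]{hm} yield $\mu((\mathcal{I}_G)_\mathfrak{p}) = n = \h((\mathcal{I}_G)_\mathfrak{p})$, exactly as in Type~(3) of Theorem~\ref{odd-unicyclic-aci}.

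Case~(2) demands more care, since the triangle on $\{u, i, v\}$ destroys every induced claw at $u$, $v$, and $i$. Here I would replace the claw relation by a syzygy built from the triangle generators $\bar{g}_{u,i}, \bar{g}_{v,i}, \bar{g}_{u,v}$ together with an edge of the long arc $P_2$ (and, when needed, of the pendant path), engineered to place suitable $f$-entries in $I_1(\phi)$ that separate the height-$n$ minimal primes. The corresponding $L_G$-statement follows by the same template, using $Q_T(G)$ and \cite[Theorem 5.2]{her3} in place of $\mathfrak{p}_T^{\sigma}(G)$ when $\sqrt{-1} \notin \K$, and invoking Remark~\ref{rmk-parity} when $\sqrt{-1} \in \K$. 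The main obstacle I anticipate is precisely the construction of this replacement syzygy in Case~(2), along with the combinatorial bookkeeping of which $f$-entries fall into which minimal primes.
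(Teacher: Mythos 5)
Your necessity argument is sound and matches the paper's, but your converse has a genuine gap at the minimal primes with $T=\emptyset$. Since $G$ is connected and non-bipartite, $\mathfrak{p}^{+}(G)$ and $\mathfrak{p}^{-}(G)$ are minimal primes of $\mathcal{I}_G$ of height exactly $n$, and \emph{every} $f_{i,j}$ lies in both of them: modulo $(x_k\mp y_k:k\in[n])$ one has $f_{i,j}=x_iy_j-x_jy_i\mapsto \pm(y_iy_j-y_jy_i)=0$, and likewise $\bar{g}_{i,j}\mapsto 0$. So your claim that ``at least one of these $f$-entries escapes $\mathfrak{p}$'' fails precisely for $\mathfrak{p}=\mathfrak{p}^{\pm}(G)=\mathfrak{p}^{\sigma}_{\emptyset}(G)$, and no claw-type relation, whose usable entries are all of the form $f_{i,j}$, can ever show $I_1(\varphi)\not\subset\mathfrak{p}^{\pm}(G)$. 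The paper closes exactly this case with a structurally different syzygy: the image under $\varPhi_2$ of the even-cycle relation of \cite[Theorem 3.5]{JAR}, whose coefficients $\varPhi_2((b)_k)$ are \emph{monomials} and hence lie outside the primes $\mathfrak{p}^{\pm}(G)$. Without an ingredient of this kind your argument does not establish local complete intersection at $T=\emptyset$ even in Case~(1), so the converse is incomplete as proposed.

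Concerning Case~(2), the obstacle you anticipate is illusory: the claw relation $(-1)^{p_A(v)}f_{z,w}\bar{g}_{u,v}+(-1)^{p_A(z)}f_{v,w}\bar{g}_{u,z}+(-1)^{p_A(w)}f_{v,z}\bar{g}_{u,w}=0$ is a formal identity that only requires $\{u,v\},\{u,z\},\{u,w\}\in E(G)$; it does not require the claw to be induced, and the paper itself applies it to non-induced claws (in type~(3) of Theorem~\ref{odd-unicyclic-aci} two of the leaves are adjacent triangle vertices). So at $u\in T$ with $N_G(u)=\{v,i,z\}$ the same relation puts $f_{i,z},f_{v,z},f_{v,i}$ into $I_1(\varphi)$, and the component/bipartition-class case analysis from type~(1) carries over; no new ``triangle syzygy'' needs to be engineered. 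For the same reason your justification in Case~(1) --- that $p_1,q_1,v$ are pairwise non-adjacent because both arcs have length at least $2$ --- is both unnecessary and false when an arc has length exactly two (e.g.\ $H=C_4$, where $v$ is adjacent to both neighbours of $u$). Finally, the height computation you dispatch with ``a direct analysis shows'' is where the paper does real work: it identifies exactly which $T\in\mathcal{C}(G)$ give $\h(\mathfrak{p}^{\sigma}_T(G))=n+1$ (reducing to $G\setminus u$, $G\setminus w$ and the classification of Theorem~\ref{odd-unicyclic-aci}), which is needed both to conclude $\h(\mathcal{I}_G)=n$ and to dispose of the height-$(n+1)$ localizations via \cite[Theorem 13.5]{hm}; as written this step is an assertion, not a proof.
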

\begin{proof}
First, assume that $\mathcal{I}_G$ is an almost complete intersection ideal. Therefore, $\h(\mathcal{I}_G)=\mu(\mathcal{I}_G)-1 =n$.  Note that $G$ has two induced  odd cycles. 
If $w$ is not a vertex of an induced cycle and $\deg_G(w) \geq 3$, then  $\{w\} \in \mathcal{C}(G)$, $b_G(\{w\})\geq 2$ and $ \h(\p_{\{w\}}^{\sigma}(G))\leq n -1$ which is a  contradiction to the fact that $\h(\mathcal{I}_G) =n$.  Therefore, $\deg_G(w) \leq 2$, if $w$ is not a vertex of an induced cycle. Now, we assume that $w$ is a  vertex of an induced odd cycle. We claim that $\deg_G(w) \leq 3$. Suppose that $\deg_G(w)\geq 4$, then $w$ is a bipartition vertex of $G$. Consequently,  $\{w\} \in \mathcal{C}(G)$, $b_G(\{w\}) \geq 2$ and $\h(\p_{\{w\}}^{\sigma}(G)) \leq n-1$, which is a contradiction. Hence, $\deg_G(w) \leq 3$, if $w$ is a vertex of an induced cycle. If $\deg_G(w)=2$, for $w \in V(G) \setminus \{u,v\}$, then $H$ is an even cycle. Now, let $w$ be a vertex of the cycle other than $u$ and $v$ such that $\deg_G(w)=3$. If either $\{u,w\} \notin E(G)$ or $\{v,w\} \notin E(G)$ then, for $T= \{u,w\} \text{ or } \{v,w\}$ respectively, $\p_T^{\sigma}(G)$ is a minimal prime of  $\mathcal{I}_G$ with $ \h(\p_T^{\sigma}(G))\leq n -1$ as $b_G(T)\geq 3$. Thus, if $w \in V(G) \setminus \{u,v\}$ such that $\deg_G(w)=3$, then $\{u,w\},\{v,w\} \in E(G)$. Now, if $w,w' \in V(G) \setminus \{u,v\}$ such that $\deg_G(w)=\deg_G(w')=3$ and $w \neq w'$, then $T=\{w,w'\} \in \mathcal{C}(G)$ and $b_G(T) \geq 3$, which conflicts the fact that $\h(\mathcal{I}_G) =n$. Therefore, $H$ is obtained  by attaching  a path to a vertex $i$ of an even cycle such that $\{u,i\}, \{v,i\}$ are edges of the even cycle.
	
Now, assume that $L_G$ is almost complete intersection and char$(\K) \neq 2$.  Suppose $\sqrt{-1} \in \K$, then by Remark \ref{rmk-parity}, $\mathcal{I}_G$ is  almost complete intersection and hence, $H$ is of the  required type.  Suppose $\sqrt{-1} \notin \K$, then the proof is in the same lines as the proof for $\mathcal{I}_G$.

We now prove the converse. Assume that $H$ is an even cycle. First, we prove that $\h(\mathcal{I}_G) =n$. Let $T \in \mathcal{C}(G)$ such that $T \neq \emptyset$. We claim that, if $u \in T$ or $v \in T$, then $|T|=b_G(T)$. We can assume that $u \in T$. Clearly, $T \setminus \{u\} \in \mathcal{C}(G\setminus u)$. One can note that $G \setminus u$ is path graph on $n-1$ vertices. Therefore, $L_{G \setminus u}$ and $ \mathcal{I}_{G\setminus u}$ are complete intersections and hence $\h(\mathcal{I}_{G\setminus u})=\h(L_{G \setminus u}) =n-2=n-1+|T\setminus \{u\}|-b_{G\setminus u}(T\setminus \{u\})$. Consequently, we have $|T|=b_G(T)$. Now, assume that $\{u,v\} \cap T =\emptyset$. Let $w \in T$. Then $T\setminus w \in \mathcal{C}(G\setminus w)$. Observe that $G\setminus w$ is an odd unicyclic graph such that $\mathcal{I}_{G\setminus w} $ is either complete intersection or almost complete intersection. If $\mathcal{I}_{G\setminus w}$ is complete intersection, then $\h(\mathfrak{p}^{\sigma}_{T\setminus \{w\}}(G\setminus w)) =n-1+|T\setminus \{w\}|-b_{G\setminus w}(T\setminus \{w\})= n-1$. Consequently, we have $|T|=b_G(T)+1$.  If $\mathcal{I}_{G\setminus w}$ is almost complete intersection, then $G\setminus w$ is of type $(2)$ graph in Theorem \ref{odd-unicyclic-aci}. Since each vertex of $T\setminus w$ has degree two in $G\setminus w$, deleting each of the elements of $T\setminus w$ increases the number of bipartite connected components of the corresponding graph by one. Therefore, $|T\setminus \{w\}|=b_{G\setminus w}(T\setminus \{w\})=b_G(T)$ which further implies that $|T|=b_G(T)+1$. Thus,   $\h(\p_T^{\sigma}(G)) =n+1$ if and only if $T \neq \emptyset$ and $T \cap \{u,v\} =\emptyset$.   By \cite[Theorem 13.5]{hm},  $(\mathcal{I}_G)_{\p_T^{\sigma}(G)}$ is complete intersection ideal, if $\h(\p_T^{\sigma}(G))=n+1$. Now, let $T \in \mathcal{C}(G)$ such that $\h(\p_T^{\sigma}(G))=n$. The minimal presentation of $\mathcal{I}_G$ is \[S^{\beta_2(S/\mathcal{I}_G)} \stackrel{\varphi}{\longrightarrow} S^{n+1} \longrightarrow \mathcal{I}_G \longrightarrow 0 .\]Let $Y=y_1\cdots y_n$.  Define $b \in S^n$ as follows:
\[
(b)_k = \frac{Y}{y_ky_{k+1}} \text{ for }1 \leq k \leq n-1,
(b)_n = \frac{Y}{y_1y_n}. \] It follows from  \cite[Theorem 3.5]{JAR} that $\sum_{k=1}^{n-1}
(b)_kf_{{k,k+1}} - (b)_nf_{{1,n}} =0.$ Consequently, we have  $\sum_{k=1}^{n-1}
\varPhi_2((b)_k)\bar{g}_{{k,k+1}} - \varPhi_2((b)_n)\bar{g}_{{1,n}} =0.$ Therefore, $\varPhi_2((b)_k) \in I_1(\varphi)$, for $1 \leq k \leq n$. If $T = \emptyset$, then $\varPhi_2((b)_k) \notin \mathfrak{p}^{\sigma}_{\emptyset}(G)$ which implies that $I_1(\varphi) \not\subset \mathfrak{p}^{\sigma}_{\emptyset}(G)$. We now consider that $T \neq \emptyset$. Since $\h(\mathfrak{p}^{\sigma}_{T}(G))=n$, we have $\{u,v\} \cap T \neq \emptyset$. Without loss of generality assume that $u \in T$. As $\deg_G(u)=3$, let $N_G(u)=\{v,w,z\}$. Note that $T\setminus \{u\} \in \mathcal{C}(G \setminus u)$. Therefore, $w, z \notin T$. Notice that $A=\{u,v,w,z\}$ forms a claw in $G$ with center $u$ and  $(-1)^{p_A(v)}f_{z,w}\bar{g}_{{u,v}} + (-1)^{p_A(z)}f_{v,w}\bar{g}_{{u,z}} + (-1)^{p_A(w)}f_{v,z}\bar{g}_{{u,w}}=0.$ Consequently, $f_{z,w}, f_{v,w}, f_{v,z} \in I_1(\varphi)$. If $z$ and $w$ belong to different components of $G[\bar{T}]$, then $f_{z,w} \notin \mathfrak{p}^{\sigma}_T(G)$. In the case of $z$ and $w$ belongs to same component of $G[\bar{T}]$,  then $v$ and $z$ belong to different partition of bipartite graph $G\setminus u$. Therefore, $f_{v,z} \notin \mathfrak{p}^{\sigma}_{T}(G)$. Thus, $I_1(\varphi) \not\subset \mathfrak{p}^{\sigma}_{T}(G)$, if $\h(\mathfrak{p}^{\sigma}_{T}(G)) =n$.   By virtue of \cite[Lemma 1.4.8]{bh}, $\mu((\mathcal{I}_G)_{\mathfrak{p}^{\sigma}_{T}(G)}) \leq n$, if $\h(\mathfrak{p}^{\sigma}_{T}(G))=n$. Hence, it follows from \cite[Theorem 13.5]{hm} that $\mathcal{I}_G$ is almost complete intersection.

We now assume that $H$ satisfies hypothesis $(2)$. Let $T \in \mathcal{C}(G)$ such that $T \neq \emptyset$. If $u \in T$ or $v \in T$, then following the proof of type $(1)$, $|T|=b_G(T)$ . So, assume that $\{u,v\} \cap T =\emptyset$. Let $w \in V(G) \setminus \{u,v\}$ be such that $\deg_G(w) =3$. If $w \in T$, then $T \setminus \{w\} \in \mathcal{C}(G\setminus w)$ and $b_G(T)=b_{G\setminus w}(T\setminus \{w\})$. Since $G\setminus w$ is disjoint union of a path and an odd cycle, $\mathcal{I}_{G\setminus w}$ is complete intersection. Therefore, $\h(\mathfrak{p}^{\sigma}_{T\setminus \{w\}}(G\setminus w)) =n-2+|T\setminus \{w\}|-b_G(T)=n-3$ which further implies that $b_G(T) =|T|$. Now, we consider that $\{u,v,w\} \cap T = \emptyset$. In the case that $T$ contains a vertex of cycle say $z$, then $T\setminus \{z\} \in \mathcal{C}(G\setminus z)$ and $\mathcal{I}_{G\setminus z}$ is almost complete intersection of type $(3)$ in Theorem \ref{odd-unicyclic-aci}. As we have proved in Theorem \ref{odd-unicyclic-aci}, $\h(\mathfrak{p}^{\sigma}_{T\setminus \{z\}}(G\setminus z))=n-1+|T\setminus \{z\}|-b_{G\setminus z}(T\setminus \{z\})=n-1$ which implies that $|T|=b_G(T)+1$.  If none of the vertices of the cycle belongs to $T$, then  removing each of the elements of $T$ increases the number of bipartite connected components of the corresponding graph by  one. Consequently, we have $|T| = b_G(T)$. Thus, $\h(\mathcal{I}_G)=n$. Also, $\h(\mathfrak{p}^{\sigma}_T(G))=n+1$ if and only if $T \neq \emptyset$, $T \cap \{u,v,w\} = \emptyset$ and  $T$ contains at least one vertex of cycle.  Following the proof for type $(1)$, one can prove that $(\mathcal{I}_G)_{\mathfrak{p}^{\sigma}_T(G)}$ is complete intersection for all $T \in \mathcal{C}(G)$. Hence, $\mathcal{I}_G$ is an almost complete intersection ideal.

Suppose  $\sqrt{-1} \in \K$ and char$(\K) \neq 2$, then  $L_G$ is  almost complete intersection, by Remark \ref{rmk-parity} and the above paragraph. Now, it remains to  prove that if $H$ is either even cycle or $H$ satisfies hypothesis $(2)$, $\sqrt{-1}  \notin \K$ and char$(\K) \neq 2$, then $L_G$ is an almost complete intersection ideal. The proof of which is in the same lines as the proof for $\mathcal{I}_G$ by replacing $\mathfrak{p}^{\sigma}_T(G)$ by $Q_T(G)$.	
\end{proof}

We conclude this section by characterizing disconnected graphs whose LSS ideals and parity binomial edge ideals are  almost complete intersections.
\begin{corollary}
Let $G=G_1 \sqcup \cdots \sqcup G_k$ be a disconnected graph on $[n]$. Then $\mathcal{I}_G$ is almost complete intersection if and only if $L_G$ is almost complete intersection if and only if for some $i$, $\mathcal{I}_{G_i} $ is almost complete intersection and for $j \neq i$, $\mathcal{I}_{G_j}$ are complete intersections. 
\end{corollary}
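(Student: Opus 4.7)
The plan is to reduce the corollary to additivity of the minimal number of generators $\mu$ and the height $\h$ under disjoint unions. Since $G = G_1 \sqcup \cdots \sqcup G_k$, the ideals $\mathcal{I}_{G_i}$ (respectively $L_{G_i}$) are supported on pairwise disjoint subsets of the variables $\{x_j,y_j\}$. Hence $\mathcal{I}_G = \mathcal{I}_{G_1} + \cdots + \mathcal{I}_{G_k}$ in $S$, with
\[
\mu(\mathcal{I}_G) = \sum_{i=1}^k \mu(\mathcal{I}_{G_i}), \qquad \h(\mathcal{I}_G) = \sum_{i=1}^k \h(\mathcal{I}_{G_i}),
\]
and the same identities hold for $L_G$. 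Both identities are immediate: $\mu$ is additive because the generators in different components share no variables and hence cannot cancel each other in $S/\mathfrak{m}I$, while additivity of $\h$ follows from the primary decompositions recalled in Section~2, since every minimal prime of $\mathcal{I}_G$ splits as a sum of minimal primes of the components.

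First, using Krull's bound $\h(I) \leq \mu(I)$ for every homogeneous ideal, the equation $\mu(\mathcal{I}_G) = \h(\mathcal{I}_G) + 1$ forces exactly one of the nonnegative integers $\mu(\mathcal{I}_{G_i}) - \h(\mathcal{I}_{G_i})$ to equal $1$, with the remaining summands vanishing. By definition, each vanishing summand means the corresponding $\mathcal{I}_{G_j}$ is a complete intersection, while the single $+1$ summand says that $\mathcal{I}_{G_i}$ satisfies the numerical part of the almost complete intersection condition. In the converse direction the same arithmetic produces $\mu(\mathcal{I}_G) = \h(\mathcal{I}_G) + 1$ from the hypothesis that one component is almost complete intersection and the others are complete intersections.

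Second, I would verify the minimal-prime condition in the definition of almost complete intersection. Disjointness of variables, combined with the primary decomposition of $\mathcal{I}_G$ from Section~2, shows that every minimal prime $\mathfrak{P}$ of $\mathcal{I}_G$ has the form $\mathfrak{P}_1 + \cdots + \mathfrak{P}_k$ with each $\mathfrak{P}_i$ a minimal prime of $\mathcal{I}_{G_i}$. Because the $\mathcal{I}_{G_i}$ live in disjoint variables, localizing $\mathcal{I}_G$ at $\mathfrak{P}$ decomposes into the independent localizations $(\mathcal{I}_{G_i})_{\mathfrak{P}_i}$ on the respective variable blocks, so $(\mathcal{I}_G)_{\mathfrak{P}}$ is a complete intersection if and only if each $(\mathcal{I}_{G_i})_{\mathfrak{P}_i}$ is. The almost complete intersection component contributes this property at each of its minimal primes by hypothesis, and the complete intersection components contribute it trivially. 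Combined with the numerical step, this yields both equivalences for $\mathcal{I}_G$, and the identical argument, invoking the $Q_T(G)$ decomposition instead of the $\mathfrak{p}_T^{\sigma}(G)$ decomposition, settles $L_G$. The only real bookkeeping, and hence the main obstacle, is to justify that minimal primes of $\mathcal{I}_G$ and $L_G$ split cleanly across the components and that localization respects this splitting; both facts are immediate once one notes that the variable supports of the summands are pairwise disjoint.
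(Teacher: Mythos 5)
Your argument is correct in substance, and it is worth noting that the paper offers \emph{no} proof of this corollary at all: it is stated as an immediate consequence of the connected-graph classifications (Theorems \ref{bipartite-ci}, \ref{nonbipartite-ci}, \ref{bipartite-aci}--\ref{nonbipartite-even-aci}), whereas you give a classification-free argument from additivity of $\mu$ and $\h$ over disjoint variable blocks together with the componentwise splitting of minimal primes. Both additivity claims are fine here ($\mu(\mathcal{I}_G)=|E(G)|$ since the quadratic generators have distinct monomial supports, and height is additive because Krull dimension of a tensor product of affine $\K$-algebras adds), and your localization step is sound; for instance $\mathcal{I}_{G_i}S\cap\mathcal{I}_{G_j}S=\mathcal{I}_{G_i}\mathcal{I}_{G_j}S$ gives $\mu\bigl((\mathcal{I}_G)_{\mathfrak{P}}\bigr)=\sum_i\mu\bigl((\mathcal{I}_{G_i})_{\mathfrak{P}_i}\bigr)$ after localizing, which is what both directions of the local complete intersection check need. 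One caution: the bare slogan ``sums of primes in disjoint variables are prime'' fails over non-algebraically-closed fields, so the splitting of minimal primes is \emph{not} immediate from disjoint supports alone; it is rescued by exactly what you cite, namely that every minimal prime of $\mathcal{I}_G$ is a sign-split $\mathfrak{p}_T^{\sigma}(G)$, which visibly decomposes as $\sum_i \mathfrak{p}_{T_i}^{\sigma_i}(G_i)$ with $T=T_1\sqcup\cdots\sqcup T_k$, the sign-split condition being local to a connected component (and analogously for $Q_T(G)$, which however requires $\sqrt{-1}\notin\K$ and char$(\K)\neq 2$, the remaining cases being handled by Remark \ref{rmk-parity}). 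The only genuine loose end is the last equivalence: your ``identical argument with $Q_T(G)$'' proves that $L_G$ is almost complete intersection if and only if some $L_{G_i}$ is almost complete intersection and the rest are complete intersections, which is not yet the stated third condition, phrased in terms of the $\mathcal{I}_{G_j}$. To close the chain you must identify the componentwise conditions for $L$ and $\mathcal{I}$, and this is supplied by the paper's own results: $L_H$ is complete intersection iff $\mathcal{I}_H$ is (Corollary \ref{graph-ci}), and for connected $H$, $L_H$ is almost complete intersection iff $\mathcal{I}_H$ is (Theorems \ref{bipartite-aci}--\ref{nonbipartite-even-aci}); alternatively invoke the isomorphisms of Remark \ref{rmk-parity} when $\sqrt{-1}\in\K$ and the equality $L_G=\mathcal{I}_G$ in characteristic $2$. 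With that citation added, your proof is complete, and arguably more robust than the intended one, since apart from this bridge it nowhere uses the classification of which graphs occur.
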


\section{Cohen-Macaulayness of the Rees Algebra}
Let $G$ be a simple graph on $[n]$ and $R=S[T_{\{i,j\}} ~ : ~\{i,j\} \in E(G) \text{ with } i<j]$.  
Let $\delta, \gamma : R \to S[t]$ be given by
$\delta(T_{\{i,j\}}) = g_{i,j}t, \; \gamma(T_{\{i,j\}}) = \bar{g}_{i,j}t$. Then  Im$(\delta) = \R(L_G)$, Im$(\gamma) = \R(\mathcal{I}_G)$ and $\ker (\delta)$, $\ker(\gamma)$ are
called the defining ideals of $\R(L_G)$, $\R(\mathcal{I}_G)$ respectively. 
We now study the Cohen-Macaulayness of the Rees algebra of almost complete intersection LSS ideals and parity binomial edge ideals.  We first recall a result
that characterizes the Cohen-Macaulayness of the Rees algebra and the
associated graded ring of an almost complete intersection ideal.
\begin{theorem}\cite[Corollary 1.8]{Herr}\label{aci-cmrees}
	Let $A$ be a Cohen-Macaulay local (graded) ring and $I \subset A$ be an
	 almost complete intersection (homogeneous) ideal in $A$. Then 
	\begin{enumerate}
		\item $\gr_A(I)$ is Cohen-Macaulay if and only if
		$\depth(A/I) \geq \dim(A/I) -1.$
		\item $\R(I)$ is Cohen-Macaulay if and only if $\h(I) > 0$ and
		$\gr_A(I)$ is Cohen-Macaulay.
	\end{enumerate}
\end{theorem}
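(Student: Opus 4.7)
The plan is to treat both parts through the approximation complexes of Herzog, Simis, and Vasconcelos, which are the natural machinery for almost complete intersection ideals. Write $I = (f_1, \ldots, f_n)$ with $n = \h(I) + 1$, and consider the Koszul complex $K_\bullet = K_\bullet(f_1, \ldots, f_n; A)$. Since $A$ is Cohen-Macaulay and $I_\mathfrak{p}$ is a complete intersection at every minimal prime $\mathfrak{p}$ of $I$, every higher Koszul homology $H_i(K_\bullet)$ for $i \geq 1$ is supported in $V(I)$; moreover, for almost complete intersections the module $H_1(K_\bullet)$ is cyclic and, after identifying the one extra syzygy beyond the trivial Koszul syzygies, behaves essentially like $A/I$ as an $A$-module. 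This is the local Koszul input we will exploit.

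For part (1), the approximation complex $\mathcal{M}_\bullet(I)$ has terms built from $H_i(K_\bullet) \otimes_A \Sym_A(A^n)$ and surjects onto $\gr_A(I)$; its almost-acyclicity and the Cohen-Macaulayness of $\gr_A(I)$ are linked by the standard depth-lemma/spectral-sequence arguments of Herzog--Simis--Vasconcelos. Because the only nontrivial Koszul homology for an almost complete intersection is concentrated in $H_1(K_\bullet)$, the criterion for $\gr_A(I)$ to be Cohen-Macaulay collapses to a single depth condition on this module; unwinding the identification of $H_1(K_\bullet)$ with $A/I$ gives exactly $\depth_A(A/I) \geq \dim(A/I) - 1$, which is the statement in (1).

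For part (2), I would chase depth through the two standard short exact sequences
\begin{equation*}
0 \to \R(I)_+ \to \R(I) \to A \to 0, \qquad 0 \to I\R(I) \to \R(I) \to \gr_A(I) \to 0,
\end{equation*}
where $\R(I)_+ = \bigoplus_{n \geq 1} I^n$ and $I\R(I) \cong \R(I)_+$ as $A$-modules up to a degree shift. The hypothesis $\h(I) > 0$ is what ensures $\dim \R(I) = \dim A + 1$, so Cohen-Macaulayness of $\R(I)$ means $\depth \R(I) = \dim A + 1$. Applying the depth lemma to both sequences yields the standard comparison $\depth \R(I) = \min\{\depth A + 1,\ \depth \gr_A(I) + 1\}$ under the current hypotheses, from which the equivalence in (2) follows. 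Without $\h(I) > 0$ the dimension count for $\R(I)$ collapses and the equivalence fails, explaining the extra hypothesis.

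The main obstacle I anticipate is the precise identification of $H_1(K_\bullet)$ and the careful bookkeeping showing that the approximation complex produces the bound $\dim(A/I) - 1$ rather than the bound $\dim(A/I)$ one sees for genuine complete intersections (where $H_1(K_\bullet)$ vanishes); tracking the ``off-by-one'' through the spectral sequence is the delicate step. Once this local Koszul-theoretic computation is in hand, part (2) becomes a largely formal consequence of depth chasing in the two sequences above.
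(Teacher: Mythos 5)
The paper itself offers no proof of this statement: it is quoted verbatim from Herrmann--Ribbe--Zarzuela \cite[Corollary 1.8]{Herr}, and that source's actual proof does run through the Herzog--Simis--Vasconcelos approximation-complex machinery for almost complete intersections, so your overall strategy points at the right toolbox. However, two of your load-bearing steps are wrong as stated. In part (1), the identification of the Koszul homology is incorrect: writing $I=J+(f)$ with $J=(f_1,\dots,f_h)$ a regular sequence of length $h=\h(I)$, the standard recursion for Koszul homology gives $H_1(K_\bullet)\cong (J:_A f)/J=\operatorname{Hom}_A(A/I,A/J)$, the linkage module of $A/I$ --- a canonical-type module, which in general is neither cyclic nor ``essentially $A/I$''. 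For instance, for $I=(xy,xz,yz)\subset k[[x,y,z]]$ (an almost complete intersection of height $2$, generically a complete intersection) this module requires two generators, since $A/I$ has Cohen--Macaulay type $2$. Since your derivation of the bound $\depth(A/I)\geq\dim(A/I)-1$ consists precisely of ``unwinding the identification of $H_1$ with $A/I$,'' the key inequality in (1) is not established; what is actually needed is a depth analysis of $(J:_A f)/J$, equivalently of the module of cycles $Z_1$ via $0\to B_1\to Z_1\to H_1\to 0$, and this is where the real work in \cite{Herr} lies.

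In part (2) the gap is sharper: the formula $\depth\R(I)=\min\{\depth A+1,\ \depth\gr_A(I)+1\}$ is false, and the claim that (2) is ``a largely formal consequence of depth chasing'' cannot be repaired, because the statement you are chasing is false without the almost complete intersection hypothesis --- which your argument for (2) never invokes. Concretely, take $A=k[[t^2,t^3]]$ and $I=\mathfrak{m}=(t^2,t^3)$: here $A$ is Cohen--Macaulay, $\h(I)=1>0$, and $\gr_A(I)\cong k[x,y]/(x^2)$ is Cohen--Macaulay with Hilbert series $(1+t)/(1-t)$, yet $\R(I)$ is not Cohen--Macaulay, because by the Ikeda--Trung criterion $\R(I)$ is Cohen--Macaulay if and only if $\gr_A(I)$ is Cohen--Macaulay and $a(\gr_A(I))<0$, while here the $a$-invariant equals $0$. (This $I$ is not an almost complete intersection in the paper's sense, since $I_\mathfrak{m}$ is not a complete intersection --- which is exactly the point: the hypothesis must enter part (2), and in your sketch it never does.) Your two short exact sequences yield only depth inequalities unless the connecting maps in local cohomology are controlled; the genuinely missing input is the top-degree vanishing $a(\gr_A(I))<0$, which in \cite{Herr} is extracted from the graded structure of the approximation complex for an almost complete intersection. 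So: right machinery in outline, but the two decisive computations --- the structure of $H_1$ in (1) and the $a$-invariant bound in (2) --- are respectively wrong and absent.
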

Thus,  to prove that  $\R(\mathcal{I}_G)$ is
Cohen-Macaulay, it is enough to prove that $ \depth(S/\mathcal{I}_G) \geq
\dim(S/\mathcal{I}_G)-1,$ which is equivalent to prove that $S/\mathcal{I}_G$ is either Cohen-Macaulay or almost Cohen-Macaulay. Similarly,  to prove that  $\R(L_G)$ is
Cohen-Macaulay, it is enough to prove that  $S/L_G$ is either Cohen-Macaulay or almost Cohen-Macaulay. 

\begin{lemma}\label{betti-lemma}
Let $G$ be a graph on $[n]$. Then $\beta_{i,j}(S/L_G) =\beta_{i,j}(S/\mathcal{I}_G)$ for all $i,j$. In particular, $\pd(S/L_G)=\pd(S/\mathcal{I}_G)$, $\dim(S/L_G)=\dim(S/\mathcal{I}_G)$ and $\depth(S/L_G)= \depth(S/\mathcal{I}_G)$.
\end{lemma}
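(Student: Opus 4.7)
The plan is to prove the graded Betti number equality $\beta_{i,j}(S/L_G) = \beta_{i,j}(S/\mathcal{I}_G)$ by exhibiting, up to a harmless flat base change, an explicit graded $\K$-algebra isomorphism between $S/L_G$ and $S/\mathcal{I}_G$. The argument splits into three cases according to the characteristic of $\K$ and whether $\sqrt{-1}\in\K$, all driven by Remark \ref{rmk-parity}.

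If $\mathrm{char}(\K)=2$, Remark \ref{rmk-parity} gives $\mathcal{I}_G=L_G$ as ideals, so there is nothing to prove. If $\mathrm{char}(\K)\neq 2$ and $\sqrt{-1}\in\K$, the map $\Psi\circ\eta\colon S\to S$ from Remark \ref{rmk-parity} sends each variable to a $\K$-linear combination of $x_i,y_i$, hence is a graded $\K$-algebra automorphism of $S$, and it carries $\mathcal{I}_G$ onto $L_G$. Therefore $S/L_G\cong S/\mathcal{I}_G$ as graded $\K$-algebras, and all graded Betti numbers coincide termwise.

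The one remaining case, $\mathrm{char}(\K)\neq 2$ and $\sqrt{-1}\notin\K$, is handled by faithfully flat base change. Set $\K'=\K(\sqrt{-1})$ and $S'=S\otimes_{\K}\K'$, which is a polynomial ring over $\K'$ in the same variables. Since $\K'/\K$ is a finite free field extension, $S'$ is a free (hence faithfully flat) $S$-module, and a minimal graded free resolution of $S/L_G$ tensored with $S'$ remains minimal and resolves $S'/L_GS'$. In particular $\beta_{i,j}^{S}(S/L_G)=\beta_{i,j}^{S'}(S'/L_GS')$ and similarly for $\mathcal{I}_G$. Since $\sqrt{-1}\in\K'$, the previous case applies over $S'$ and the two sides agree. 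Combining the three cases yields the desired equality of graded Betti numbers.

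The stated consequences follow at once. The equality $\pd(S/L_G)=\pd(S/\mathcal{I}_G)$ is immediate from the definition, and then $\depth(S/L_G)=\depth(S/\mathcal{I}_G)$ by the Auslander--Buchsbaum formula. For $\dim(S/L_G)=\dim(S/\mathcal{I}_G)$, Betti numbers alone do not suffice; however, the explicit isomorphism in cases $(1)$ and $(2)$ shows the heights $\h(L_G)$ and $\h(\mathcal{I}_G)$ agree, and in case $(3)$ faithful flatness of $S\hookrightarrow S'$ preserves heights so that $\h(L_G)=\h(L_GS')=\h(\mathcal{I}_GS')=\h(\mathcal{I}_G)$, whence $\dim(S/L_G)=2n-\h(L_G)=2n-\h(\mathcal{I}_G)=\dim(S/\mathcal{I}_G)$. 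The only subtlety is the verification that minimality of the free resolution is preserved under the base change in case $(3)$, but this is standard since $\K'$ is flat over $\K$ and the maximal graded ideal of $S$ extends to that of $S'$.
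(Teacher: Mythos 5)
Your proposal is correct and follows essentially the same route as the paper: the case char$(\K)=2$ is immediate from $L_G=\mathcal{I}_G$, the case $\sqrt{-1}\in\K$ uses the graded automorphism $\Psi\circ\eta$ of Remark \ref{rmk-parity}, and the case $\sqrt{-1}\notin\K$ is handled by exactly the paper's faithfully flat base change to $\K(\sqrt{-1})$, with minimality of the extended resolution preserved because the differentials are unchanged. One minor correction: your claim that ``Betti numbers alone do not suffice'' for the dimension statement is false --- the graded Betti numbers determine the Hilbert series of $S/I$, whose pole order at $t=1$ gives the Krull dimension --- though your alternative argument via heights and flat base change is also valid.
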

\begin{proof}
If char$(\K) =2$, then $L_G = \mathcal{I}_G$ and hence, we are done. Assume now that char$(\K) \neq 2$.
If $\sqrt{-1} \in \K$, then the assertion follows from Remark \ref{rmk-parity}. Suppose that $\sqrt{-1} \notin \K$ and set $\mathbb{L}=\K(\sqrt{-1})$, $S'=\mathbb{L} \otimes_{\K} S$. Let $(\mathcal{F}_{{\cdot}}, d^{\mathcal{F}}_{\cdot})$ and $(\mathcal{G}_{\cdot}, d^{\mathcal{G}}_{\cdot} )$ be  minimal free resolution of $S/L_G$ and $S/\mathcal{I}_G$, respectively. Since $\K \subset \mathbb{L}$ is faithfully flat extension, $(\mathbb{L}\otimes_{\K} \mathcal{F}_{\cdot}, \textbf{1}_{\mathbb{L}} \otimes_{\K} d^{\mathcal{F}}_{\cdot})$ and $(\mathbb{L}\otimes_{\K} \mathcal{G}_{\cdot},\textbf{1}_{\mathbb{L}} \otimes_{\K} d^{\mathcal{G}}_{\cdot})$ are   free resolutions of $S'/L_G$ and $S'/\mathcal{I}_G$ respectively. Since for each $i$, $\textbf{1}_{\mathbb{L}} \otimes_{\K} d^{\mathcal{F}}_{i}  =d^{\mathcal{F}}_{i}$ and $\textbf{1}_{\mathbb{L}} \otimes_{\K} d^{\mathcal{G}}_{i}  =d^{\mathcal{G}}_{i}$, $(\mathbb{L}\otimes_{\K} \mathcal{F}_{\cdot}, \textbf{1}_{\mathbb{L}} \otimes_{\K} d^{\mathcal{F}}_{\cdot})$ and $(\mathbb{L}\otimes_{\K} \mathcal{G}_{\cdot},\textbf{1}_{\mathbb{L}} \otimes_{\K} d^{\mathcal{G}}_{\cdot})$ are minimal   free resolution of $S'/L_G$ and $S'/\mathcal{I}_G$ respectively. Consequently, $\beta_{i,j}^S(S/L_G) =\beta_{i,j}^{S'}(S'/L_G) =\beta_{i,j}^{S'}(S'/\mathcal{I}_G)=\beta_{i,j}^S(S/\mathcal{I}_G)$.
\end{proof}   
Due to Lemma \ref{betti-lemma}, it is enough to study the Cohen-Macaulayness of almost complete intersection parity binomial edge ideals.

The following fundamental property of projective dimension is used repeatedly in this section.

\begin{lemma}\label{pd-lemma}
	Let $S$ be a standard graded polynomial ring. Let $M,N$ and $P$ be finitely generated graded $S$-modules. 
	If $ 0 \rightarrow M \xrightarrow{f}  N \xrightarrow{g} P \rightarrow 0$ is a 
	short exact sequence with $f,g$  
	graded homomorphisms of degree zero, then 
	\begin{enumerate}
		\item[(i)] $\pd_S(M) \leq \max \{\pd_S(N), \pd_S(P)-1\},$
		\item[(ii)] $\pd_S(P) \leq \max \{\pd_S(N), \pd_S(M)+1\}$,
		\item[(iii)] $\pd_S(P) = \pd_S(N)$ if  $\pd_S(N) > \pd_S(M)$.
	\end{enumerate}	
\end{lemma}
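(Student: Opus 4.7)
The plan is to deduce all three statements from the long exact sequence of $\Tor$ against the residue field $\sk = S/\mathfrak{m}$, where $\mathfrak{m}$ is the graded maximal ideal of $S$. Since $S$ is a polynomial ring and the modules are finitely generated graded, the projective dimension of any such module $X$ admits the characterization
\[
\pd_S(X) \;=\; \max\{\,i \;:\; \Tor_i^S(X,\sk) \neq 0\,\}.
\]
The short exact sequence $0 \to M \to N \to P \to 0$ induces a long exact sequence
\[
\cdots \longrightarrow \Tor_{i+1}^S(P,\sk) \longrightarrow \Tor_i^S(M,\sk) \longrightarrow \Tor_i^S(N,\sk) \longrightarrow \Tor_i^S(P,\sk) \longrightarrow \Tor_{i-1}^S(M,\sk) \longrightarrow \cdots,
\]
and the entire lemma is a routine bookkeeping exercise on this sequence.

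For part (i), I would fix $i > \max\{\pd_S(N), \pd_S(P) - 1\}$. Then $\Tor_i^S(N,\sk) = 0$ (since $i > \pd_S(N)$) and $\Tor_{i+1}^S(P,\sk) = 0$ (since $i + 1 > \pd_S(P)$). The segment $\Tor_{i+1}^S(P,\sk) \to \Tor_i^S(M,\sk) \to \Tor_i^S(N,\sk)$ then forces $\Tor_i^S(M,\sk) = 0$, giving the desired bound on $\pd_S(M)$. For part (ii), I would symmetrically fix $i > \max\{\pd_S(N), \pd_S(M) + 1\}$, obtain vanishing of $\Tor_i^S(N,\sk)$ and $\Tor_{i-1}^S(M,\sk)$, and read off $\Tor_i^S(P,\sk) = 0$ from $\Tor_i^S(N,\sk) \to \Tor_i^S(P,\sk) \to \Tor_{i-1}^S(M,\sk)$.

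For part (iii), set $p = \pd_S(N)$ and assume $p > \pd_S(M)$. The upper bound $\pd_S(P) \leq p$ follows from (ii), because $\max\{p, \pd_S(M)+1\} = p$ under the hypothesis. For the reverse inequality, the exact segment
\[
\Tor_p^S(M,\sk) \longrightarrow \Tor_p^S(N,\sk) \longrightarrow \Tor_p^S(P,\sk)
\]
has $\Tor_p^S(M,\sk) = 0$ because $p > \pd_S(M)$, so the map on the right is injective; since $\Tor_p^S(N,\sk) \neq 0$ by definition of $p$, we conclude $\Tor_p^S(P,\sk) \neq 0$ and hence $\pd_S(P) \geq p$.

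There is no real obstacle here; the statement is standard and the only thing to watch is the indexing in the long exact sequence (making sure the shift of $1$ lands on the correct term in each of (i) and (ii)). Since the paper uses this lemma repeatedly in later sections to control $\pd_S$ through short exact sequences built from colon ideals, the proof may simply be cited from a textbook (e.g.\ \cite{bh}) rather than written out, but the three-line Tor argument above suffices.
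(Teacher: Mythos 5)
Your proposal is correct, and it is exactly the standard argument: the paper itself states Lemma \ref{pd-lemma} without proof, as a ``fundamental property of projective dimension,'' so the expected justification is precisely the long exact sequence of $\Tor(-,\sk)$ together with the characterization $\pd_S(X)=\max\{i:\Tor_i^S(X,\sk)\neq 0\}$, which holds for finitely generated graded modules over a polynomial ring by graded Nakayama. Your indexing in all three parts is right (in particular the injectivity of $\Tor_p^S(N,\sk)\to\Tor_p^S(P,\sk)$ in (iii) from the vanishing of $\Tor_p^S(M,\sk)$), so there is nothing to correct beyond the harmless convention that $\pd_S(0)=-\infty$.
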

It follows from  \cite[Theorems 4.3, 4.7]{JAR} that if $G$ is a tree, then  $\mathcal{I}_G$ is almost complete intersection ideal if and only if $S/\mathcal{I}_G$ is almost Cohen-Macaulay. Consequently, $\R(\mathcal{I}_G)$ and $\gr_S(\mathcal{I}_G)$ are Cohen-Macaulay. Now, we prove the same for odd unicyclic graphs. First, we compute the  projective dimension of parity binomial edge ideal of an odd unicyclic graph.
\begin{theorem}\label{pd-odd-unicyclic}
	Let $G$  be a connected odd unicyclic graph on $[n]$. Then $\pd(S/\mathcal{I}_G)=n$.
\end{theorem}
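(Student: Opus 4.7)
The approach is to establish matching lower and upper bounds on $\pd(S/\mathcal{I}_G)$.

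For the lower bound, since $G$ is non-bipartite, $\mathfrak{p}^{+}(G) = (x_i + y_i : i \in [n])$ is a minimal prime of $\mathcal{I}_G$ of height $n$. After localizing at $\mathfrak{p}^{+}(G)$, the ideal $(\mathcal{I}_G)_{\mathfrak{p}^{+}(G)}$ is primary to the maximal ideal of the regular local ring $S_{\mathfrak{p}^{+}(G)}$ of dimension $n$, so Auslander--Buchsbaum yields $\pd_{S_{\mathfrak{p}^{+}(G)}}(S_{\mathfrak{p}^{+}(G)}/(\mathcal{I}_G)_{\mathfrak{p}^{+}(G)}) = n$. Since projective dimension cannot increase under localization, $\pd_S(S/\mathcal{I}_G) \ge n$.

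For the upper bound I would induct on $n$. In the base case $G = C_n$ is an odd cycle, and Theorem \ref{nonbipartite-ci} says that $\mathcal{I}_G$ is a complete intersection of $n$ generators in $S$, so $\pd(S/\mathcal{I}_G) = n$. Otherwise $G$ contains a leaf $v$; let $u$ be its unique neighbor and set $e = \{u, v\}$. Then $G \setminus v$ is a connected odd unicyclic graph on $n - 1$ vertices, and from $\mathcal{I}_G = \mathcal{I}_{G \setminus v} + (\bar{g}_e)$ one obtains the short exact sequence
\[
0 \to \bigl(S/(\mathcal{I}_{G \setminus v} : \bar{g}_e)\bigr)(-2) \xrightarrow{\cdot \bar{g}_e} S/\mathcal{I}_{G \setminus v} \to S/\mathcal{I}_G \to 0.
\]
Since $\mathcal{I}_{G \setminus v}$ is extended from the polynomial subring in which $x_v$ and $y_v$ do not appear, the inductive hypothesis gives $\pd_S(S/\mathcal{I}_{G \setminus v}) = n - 1$. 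By Lemma \ref{pd-lemma}(ii), the desired bound $\pd(S/\mathcal{I}_G) \le n$ reduces to $\pd_S\bigl(S/(\mathcal{I}_{G \setminus v} : \bar{g}_e)\bigr) \le n - 1$.

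The main obstacle is the analysis of this colon ideal. Assuming $\mathrm{char}(\K) \neq 2$, $\mathcal{I}_{G \setminus v}$ is radical, so the colon equals the intersection of those minimal primes $\mathfrak{p}_T^{\sigma}(G \setminus v)$ that do not contain $\bar{g}_e = x_u x_v - y_u y_v$. Such a prime contains $\bar{g}_e$ iff $u \in T$: if $u \in T$ then $x_u, y_u \in \mathfrak{p}_T^{\sigma}$, while if $u \notin T$ then $\bar{g}_e$ reduces to a nonzero linear form in $x_v, y_v$ over the domain $S/\mathfrak{p}_T^{\sigma}$. In analogy with Lemma \ref{colon-non-bipartite} I would conjecture
\[
\mathcal{I}_{G \setminus v} : \bar{g}_e = \mathcal{I}_{G \setminus v} + (f_{a,b} : a, b \in N_{G \setminus v}(u)),
\]
allowing one to relate $S/(\mathcal{I}_{G \setminus v} : \bar{g}_e)$ to the parity binomial edge ideal (or, via $\varPhi_2$, the binomial edge ideal) of the auxiliary graph obtained from $G \setminus v$ by completing the neighborhood of $u$ to a clique. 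Verifying this identification carefully and then bounding the projective dimension of the resulting ideal by $n - 1$, likely via a secondary induction distinguishing whether $u$ lies on the odd cycle or on an attached tree, is the heart of the proof. The characteristic-two case reduces to the analogous statement for $L_G$ through Remark \ref{rmk-parity}, handled via the same short-exact-sequence framework.
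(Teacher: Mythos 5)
Your lower bound coincides with the paper's (the paper simply states $\pd(S/\mathcal{I}_G)\geq \h(\mathfrak{p}^{+}(G))=n$; your localization-plus-Auslander--Buchsbaum justification is the correct reason), and your overall skeleton --- a short exact sequence on a colon ideal combined with Lemma \ref{pd-lemma} --- matches the paper's. But the upper bound contains a genuine gap, and it stems from which edge you remove. The paper deletes an edge $e=\{u,v\}$ \emph{of the cycle}, so that $G\setminus e$ is a tree, hence bipartite; bipartiteness is exactly what makes Lemma \ref{colon-non-bipartite} applicable, giving $\mathcal{I}_{G\setminus e}:\bar{g}_e=\varPhi_2\bigl(J_{(G\setminus e)_e}\bigr)$ with $(G\setminus e)_e$ a block graph, so that both $\pd(S/\mathcal{I}_{G\setminus e})=n-1$ and $\pd\bigl(S/(\mathcal{I}_{G\setminus e}:\bar{g}_e)\bigr)=n-1$ are read off from the known projective dimension of binomial edge ideals of block graphs (\cite[Theorem 1.1]{her1}, via Remark \ref{main-rmk}). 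You instead delete a leaf, so $G\setminus v$ remains non-bipartite, and then the two steps you yourself flag as the heart of the matter are not carried out: (i) the colon identity $\mathcal{I}_{G\setminus v}:\bar{g}_e=\mathcal{I}_{G\setminus v}+(f_{a,b}: a,b\in N_{G\setminus v}(u))$ is only conjectured, and it cannot be obtained ``in analogy with'' Lemma \ref{colon-non-bipartite}, because that lemma's proof transports everything to $J_{G\setminus e}$ through $\varPhi_2$ and then quotes \cite[Proposition 2.1]{RR14} --- and $\varPhi_2$ exists only for bipartite graphs; your minimal-prime analysis correctly shows the colon is the intersection of the $\mathfrak{p}_T^{\sigma}(G\setminus v)$ with $u\notin T$, but identifying that intersection with your candidate ideal is precisely the missing non-bipartite analogue of the cited proposition. (ii) Even granting the formula, the bound $\pd\bigl(S/(\mathcal{I}_{G\setminus v}:\bar{g}_e)\bigr)\leq n-1$ is unproved, and your proposed route is blocked: the ideal $\mathcal{I}_{G\setminus v}+(f_{a,b}:\ldots)$ is not the parity binomial edge ideal of the clique-completed auxiliary graph (those are generated by $\bar{g}$'s, not $f$'s), and it cannot be converted ``via $\varPhi_2$'' into a binomial edge ideal since $G\setminus v$ admits no bipartition.

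A further concrete defect is your characteristic-two remark: in characteristic two one has $\mathcal{I}_G=L_G$ identically (Remark \ref{rmk-parity}), so ``reducing to $L_G$'' changes nothing, while your colon computation relied on $\mathcal{I}_{G\setminus v}$ being radical, which fails in characteristic two. The paper's proof, by contrast, is characteristic-free, because Lemma \ref{colon-non-bipartite} rests on a Gr\"obner-basis argument for the (characteristic-independent) primary decomposition of $J_{G\setminus e}$. To repair your argument with minimal change, replace leaf-deletion by deletion of a cycle edge, which lands you exactly in the paper's proof; alternatively, you would need to prove your conjectured colon formula for non-bipartite base graphs and then establish the projective dimension bound for the resulting ideal by an independent induction --- neither of which is routine.
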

\begin{proof}
	Since $\mathfrak{p}^{+}(G)$ is a minimal prime of $\mathcal{I}_G$, we get  that $\pd(S/\mathcal{I}_G) \geq \h(\mathfrak{p}^+(G))=n$. Let $e=\{u,v\}$ be an edge of the cycle. Now, consider the short exact sequence 
	\begin{equation}\label{ses-LS}
	0 \longrightarrow \frac{S}{\mathcal{I}_{G\setminus e}:\bar{g}_e}(-2) \stackrel{\cdot \bar{g}_e}\longrightarrow \frac{S}{\mathcal{I}_{G\setminus e}} \longrightarrow \frac{S}{\mathcal{I}_G} \longrightarrow 0.
	\end{equation}  Observe that $G \setminus e$ is a tree and  $(G\setminus e)_e$ is a block graph on $[n]$. It follows from \cite[Theorem 1.1]{her1} that $\pd(S/J_{G\setminus e}) =n-1$ and $\pd(S/J_{(G\setminus e)_e}) =n-1$. Therefore, by virtue of Lemma \ref{colon-non-bipartite}, $\pd(S/\mathcal{I}_{G \setminus e}:\bar{g}_e)=\pd(S/J_{(G\setminus e)_e}) =n-1$ and by Remark \ref{main-rmk}, $\pd(S/\mathcal{I}_{G\setminus e}) =\pd(S/J_{G\setminus e}) =n-1$. Hence, by applying Lemma \ref{pd-lemma} on the  short exact sequence \eqref{ses-LS},  $\pd(S/\mathcal{I}_G) \leq n$.
\end{proof}
\begin{theorem}\label{rees-cohen1}
	Let $G$ be a connected  odd unicyclic graph on $[n]$.  Assume that  char$(\K) \neq 2$. Then the following are equivalent:
	\begin{enumerate}
		\item $S/\mathcal{I}_G$ is almost Cohen-Macaulay,
		\item $\mathcal{I}_G$ is almost complete intersection.
	\end{enumerate}
	In particular, $\R(\mathcal{I}_G)$ and $\gr_S(\mathcal{I}_G)$ are Cohen-Macaulay, if $\mathcal{I}_G$ is almost complete intersection.
	\begin{proof}
		By  Auslander-Buchsbaum formula and Theorem \ref{pd-odd-unicyclic}, $\depth(S/\mathcal{I}_G)=n$. Therefore, $S/\mathcal{I}_G$ is almost Cohen-Macaulay if and only if $\dim(S/\mathcal{I}_G)=n+1$ if and only if $\h(\mathcal{I}_G)=n-1$ if and only if $\mathcal{I}_G$ is almost complete intersection, by Theorem \ref{odd-unicyclic-aci}. 
	\end{proof}	
\end{theorem}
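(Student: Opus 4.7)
The plan is to reduce both conditions to a single numerical equality via the Auslander--Buchsbaum formula and then invoke the already-established classification of almost complete intersection parity binomial edge ideals of odd unicyclic graphs.

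First I would apply Theorem \ref{pd-odd-unicyclic} to pin down the depth: since $G$ is a connected odd unicyclic graph, $\pd(S/\mathcal{I}_G) = n$, and the Auslander--Buchsbaum formula in the $2n$-variable ring $S$ gives $\depth(S/\mathcal{I}_G) = 2n - n = n$. Consequently, $S/\mathcal{I}_G$ is almost Cohen--Macaulay if and only if $\dim(S/\mathcal{I}_G) = n+1$, i.e.\ $\h(\mathcal{I}_G) = n-1$. Since $G$ is connected unicyclic, $|E(G)| = n$ and therefore $\mu(\mathcal{I}_G) = n$; the height condition $\h(\mathcal{I}_G) = n-1$ is thus exactly the generator-count condition $\mu(\mathcal{I}_G) = \h(\mathcal{I}_G)+1$ in the definition of an almost complete intersection.

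To close the loop with full ACI (which also requires the local condition at minimal primes), I would appeal to Theorem \ref{odd-unicyclic-aci}: its forward implication uses only $\mu = \h + 1$ to force $G$ into one of three explicit families, and the converse of that theorem verifies the local complete intersection property at every minimal prime for each family. Hence $\h(\mathcal{I}_G) = n-1$ is equivalent to $\mathcal{I}_G$ being ACI, completing the equivalence (1) $\Leftrightarrow$ (2).

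For the consequence about blowup algebras, assuming (2), the equivalence just proved gives $\depth(S/\mathcal{I}_G) \geq \dim(S/\mathcal{I}_G) - 1$, so Theorem \ref{aci-cmrees}(1) yields Cohen--Macaulayness of $\gr_S(\mathcal{I}_G)$. Since $\h(\mathcal{I}_G) = n - 1 > 0$, part (2) of the same theorem then gives that $\R(\mathcal{I}_G)$ is Cohen--Macaulay. The potential obstacle in this argument is not in the depth computation itself (which is immediate from Theorem \ref{pd-odd-unicyclic}), but rather in knowing that $\mu = \h + 1$ alone is enough to force ACI for this class of graphs; this is precisely the content of the classification in Theorem \ref{odd-unicyclic-aci}, so the argument is essentially a formal assembly once that classification is in hand.
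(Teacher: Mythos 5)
Your proposal is correct and follows essentially the same route as the paper: compute $\depth(S/\mathcal{I}_G)=n$ from Theorem \ref{pd-odd-unicyclic} via Auslander--Buchsbaum, translate almost Cohen--Macaulayness into $\h(\mathcal{I}_G)=n-1$, and close the equivalence with the classification in Theorem \ref{odd-unicyclic-aci} before invoking Theorem \ref{aci-cmrees} for the blowup algebras. Your added remark that the forward direction of Theorem \ref{odd-unicyclic-aci} uses only $\mu=\h+1$ while its converse supplies the local complete intersection condition is a correct and slightly more explicit accounting of a step the paper's proof leaves implicit.
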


\begin{remark}\label{depth-rmk}{\em
	Let $G$ be a connected   even unicyclic graph such that $\mathcal{I}_G$ is an almost complete intersection ideal. Then, it follows from \cite[Lemma 4.6]{JAR} and Remark \ref{main-rmk} that $\pd(S/\mathcal{I}_G) \leq n$. By virtue of  \cite[Theorem 6.1]{dav}, $S/\mathcal{I}_G$ is Cohen-Macaulay if and  only if $G$ is obtained by attaching  a path of length $\geq 1$ to two adjacent vertices of $C_4$. Since $\dim(S/\mathcal{I}_G) =n+1$,  if $S/\mathcal{I}_G$ is not Cohen-Macaulay, then $\depth(S/\mathcal{I}_G)=n$ and hence, $S/\mathcal{I}_G$ is almost Cohen-Macaulay. Moreover,  $\gr_S(\mathcal{I}_G)$ and $\R(\mathcal{I}_G)$ are Cohen-Macaulay.}
\end{remark}

Now, we move on to study the Cohen-Macaulayness of $\R(\mathcal{I}_G)$, where $G$ is obtained by adding a chord in a unicyclic graph such that $\mathcal{I}_G$ is almost complete intersection. To do that, we need to compute the depth of $S/\mathcal{I}_G$. 

A graph $G$ is said to be \textit{closed}, if generating set  of $J_G$ is a  Gr$\ddot{\text{o}}$bner basis with respect to lexicographic order induced by $x_1 > \cdots > x_n >y_1>\cdots>y_n$. Let $H$ be a connected closed graph on $[n]$ such that $S/J_H$ is Cohen-Macaulay. By \cite[Theorem 3.1]{her1},
there exist integers $1=a_1 < a_2< \cdots < a_s <a_{s+1} = n$ such that for $1\leq i \leq s$, $F_i=[a_i,a_{i+1}]$ is a maximal clique and if $F$ is a maximal clique, then $F=F_i$ for some $1 \leq i \leq s$. Set $e=\{1,n\}$. The graph $G=H \cup \{e\}$ 
is called the \textit{quasi-cycle} graph associated to $H$. In \cite{FM}, Mohammadi and Sharifan have studied the Hilbert series of binomial edge ideal of quasi-cycles.
\begin{remark}\cite[Remark 3.3]{AR3}\label{quasi-rmk}
	Let $G$ be the quasi-cycle graph associated with a Cohen-Macaulay closed graph $H$. 
	Let  $F_1,\ldots,F_s$ be a leaf order on $\Delta(H)$. Let $\iv(G)$ denote the number of internal vertices in $G$. If $H \neq P_3$, then $\iv(G)\geq s$ 
	and $\iv(H) =s-1$.
\end{remark}

\begin{theorem}\label{quasi-block}
	Let $H$ be a connected closed graph on $[n]$ such that $S/J_H$ is Cohen-Macaulay and $G=H \cup \{e\}$ be a quasi-cycle graph associated to $H$.  Then $\pd(S/J_G) \leq n$. Moreover, if $H \neq P_3$, then $\pd(S/J_G) =n$. 
\end{theorem}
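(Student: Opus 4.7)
The plan is to combine the short exact sequence
\[
0 \longrightarrow \bigl(S/(J_H : f_e)\bigr)(-2) \xrightarrow{\cdot f_e} S/J_H \longrightarrow S/J_G \longrightarrow 0
\]
with Lemma \ref{pd-lemma} for the upper bound, and a minimal-prime count for the lower bound. Since $H$ is connected and $S/J_H$ is Cohen--Macaulay of dimension $n+1$, Auslander--Buchsbaum gives $\pd_S(S/J_H)=n-1$, so the first task is to show that $\pd_S(S/(J_H:f_e))\le n-1$.

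By \cite[Theorem 3.1]{her1} the graph $H$ is the clique path $F_1,\ldots,F_s$ with $F_i=[a_i,a_{i+1}]$, and every $T\subseteq\{a_2,\ldots,a_s\}$ has the cut point property, so the minimal primes of $J_H$ are exactly $\{P_T(H):T\subseteq\{a_2,\ldots,a_s\}\}$. The generator $f_e=f_{1,n}$ lies in $P_T(H)$ iff $T$ fails to separate $1$ from $n$ in $H$; by the interval structure of $H[\bar T]$, this happens only when $T=\emptyset$, so
\[
J_H:f_e \;=\; \bigcap_{\emptyset\neq T\subseteq\{a_2,\ldots,a_s\}}P_T(H).
\]
Each $P_T(H)$ has height $n-1$ and defines a Cohen--Macaulay quotient of projective dimension $n-1$. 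A Mayer--Vietoris induction on $s$, using exact sequences of the form $0 \to S/(I\cap P_T) \to S/I\oplus S/P_T \to S/(I+P_T) \to 0$ and controlling that each sum $I+P_T$ has projective dimension at most $n$, is the plan to propagate the bound $\pd_S(S/(J_H:f_e))\le n-1$; this is where I expect the main obstacle to sit, since the $2^{s-1}-1$ primes must be combined so that the intermediate sums do not lose depth. Feeding this into the initial short exact sequence via Lemma \ref{pd-lemma} yields $\pd_S(S/J_G)\le\max\{n-1,(n-1)+1\}=n$.

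For the ``moreover'' part, I would exhibit, when $H\neq P_3$, a cut set $T$ in $G$ with $|T|=c_G(T)=2$, so that $P_T(G)$ is a minimal prime of $J_G$ of height $n$. If $s=2$, the hypothesis $H\neq P_3$ forces $|F_1|\ge 3$ or $|F_2|\ge 3$; take $T=\{1,a_2\}$ in the first case and $T=\{n,a_2\}$ in the second. If $s\ge 3$, take $T=\{1,a_3\}$. In each case a direct check, using that the chord $\{1,n\}\in E(G)$ reconnects the outer cliques $F_1$ and $F_s$ while the clique-path structure of $H$ still disconnects the interior upon removing the two chosen cut vertices, verifies the cut point property and gives $c_G(T)=2$. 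Then $P_T(G)$ is a minimal, hence associated, prime of $J_G$ of height $n$; localizing at $P_T(G)$ produces a zero-dimensional local ring, and Auslander--Buchsbaum there forces $\pd_S(S/J_G)\ge n$, matching the upper bound.
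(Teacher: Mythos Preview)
Your approach differs from the paper's. The paper does not use the edge short exact sequence at all; instead it invokes Ohtani's decomposition $J_G = J_{G_v} \cap \bigl((x_v,y_v)+J_{G\setminus v}\bigr)$ for an internal vertex $v$ of $H$, obtaining
\[
0 \longrightarrow S/J_G \longrightarrow S/J_{G_v}\oplus S/\bigl((x_v,y_v)+J_{G\setminus v}\bigr) \longrightarrow S/\bigl((x_v,y_v)+J_{G_v\setminus v}\bigr) \longrightarrow 0,
\]
and argues by induction on $\iv(G)$. The point is that $G_v$ and $G_v\setminus v$ are again quasi-cycles (or complete graphs) with $\iv$ strictly smaller, while $G\setminus v$ is Cohen--Macaulay closed; so every term in the recursion stays inside a class where $\pd$ is already controlled, and the induction closes cleanly.

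Your route has a genuine gap at exactly the place you flag. Showing $\pd_S\bigl(S/(J_H:f_e)\bigr)\le n-1$ via a Mayer--Vietoris over the primes $P_T(H)$ requires, at each stage, a uniform bound on $\pd_S\bigl(S/(I+P_T)\bigr)$ for the intermediate intersections $I$. Those sums are not binomial edge ideals of any graph, and you provide no invariant that decreases or any structural description of them; without this the induction cannot even start. There is also no shortcut via $J_H:f_e=J_{H_e}$ here, because $1$ and $n$ lie in the same component of $H$, so the colon genuinely drops only the single prime $P_\emptyset(H)$ rather than reducing to a known binomial edge ideal. This is precisely why the paper avoids the edge sequence for this theorem and uses the vertex decomposition instead.

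Two smaller points. Your assertion that \emph{every} $T\subseteq\{a_2,\ldots,a_s\}$ has the cut point property is false when some $|F_i|=2$: if $a_{i+1}=a_i+1$ then in $T=\{a_i,a_{i+1}\}$ the vertex $a_i$ is not a cut vertex of $H\setminus a_{i+1}$. This does not invalidate the displayed formula for $J_H:f_e$ (the extra primes are redundant), but the justification should be adjusted. Your lower-bound argument is correct and is essentially the same as the paper's choice of $T=\{a_1,a_3\}$ or $T=\{a_i,a_{i+1}\}$.
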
 
\begin{proof}
	If $H = P_3$, then $G=K_3$ and  the result follows from \cite[Theorem 1.1]{her1}. We now assume that $H \neq P_3$.  We proceed by induction on $\iv(G)$. By virtue of Remark \ref{quasi-rmk}, $\iv(G)\geq 2$.
	If $\iv(G)=2$,  then $H=G\setminus e$ is a  block graph with exactly one internal vertex. Let $v \in V(H)$ be the 
	internal vertex of $H$. Therefore, $v$ is also an internal vertex of $G$. By \cite[Lemma 4.8]{oh}, $J_G=J_{G_v} \cap ((x_v,y_v)+J_{G\setminus v})$.
	Note that  $G_v$ is a complete graph on $[n]$ and $G\setminus v$  is a block graph on $n-1$ vertices.    Therefore, by \cite[Theorem 3.1]{her1},  $\pd(S/J_{G_v})=n-1$, 
	$\pd(S/((x_v,y_v)+J_{G \setminus v}))=n $. Note that $J_{G_v} +((x_v,y_v)+J_{G\setminus v}) = (x_v,y_v)+J_{G_v \setminus v}$.  
	Therefore, we have the following short exact sequence:
	\begin{equation}\label{ses1}
	0  \longrightarrow  \dfrac{S}{J_{G} } \longrightarrow
	\dfrac{S}{J_{G_v}} \oplus \dfrac{S}{(x_v,y_v)+J_{G \setminus v}} \longrightarrow \dfrac{S}{(x_v,y_v)+J_{G_v \setminus v} } \longrightarrow 0.
	\end{equation} Observe that $G_v \setminus v$ is a complete graph on $n-1$ vertices. Consequently, by \cite[Theorem 3.1]{her1}, $\pd(S/((x_v,y_v)+J_{G_v \setminus v}))=n $. Thus, by Lemma \ref{pd-lemma} and the short exact sequence (\ref{ses1}), $\pd(S/J_G)\leq n$.
	
	Now assume that $\iv(G)>2$.  Let  $v \in V(H)$ be an internal vertex of $H$. Therefore, $v$ is 
	an internal vertex of $G$. Notice that $G\setminus v$ is a connected Cohen-Macaulay closed graph on $n-1$ vertices, therefore by \cite[Theorem 3.1]{her1},  $\pd(S/((x_v,y_v)+J_{G\setminus v})) =n$. Also, observe that $G_v$ is a quasi-cycle graph with $\iv(G_v)=\iv(G)-1$, 
	hence, by induction $\pd(S/J_{G_v}) \leq n$.  Since $G_v \setminus v$ is a quasi-cycle on $n-1$ vertices with $\iv(G_v \setminus v)= \iv(G)-1$, by induction,  $\pd(S/((x_v,y_v)+J_{G_v \setminus v})) \leq n+1$. Hence, using Lemma \ref{pd-lemma} in the short exact sequence (\ref{ses1}), we conclude that  $\pd(S/J_G) \leq n$. Now, if $H \neq P_3$, then either $s \geq 3$ or for some $1 \leq i \leq s$, $|F_i| >2$. In first case  $T = \{a_1,a_3\}$ has the cut point property and in second case $T=\{a_i,a_{i+1}\}$ has the cut point property. In both the cases $c_G(T)=2$, consequently, $\h(P_T(G))=n+|T|-c_G(T)=n$. Hence, $\pd(S/J_G) \geq \h(P_T(G)) =n$.
\end{proof}
It follows from \cite[Corollary 4.2]{FM} and Theorem \ref{quasi-block} that if $H \neq P_3$, then $S/J_G$ is almost Cohen-Macaulay.
\begin{lemma}\label{quasi-path}
	Let $H$ be a connected closed graph on $[n-m+1]$ such that $S/J_H$ is Cohen-Macaulay and $G'=H \cup \{e'\}$ be a quasi-cycle graph associated to $H$. Let $v$ be an internal vertex of $H$ and $G$ be a graph on $[n]$ obtained by attaching a path $P_m$ to the vertex $v$ of  $G'$.  Then $\pd(S/J_G) \leq n$. Moreover, if $\iv(G')=2$, then $\pd(S/J_G)=n-1$.
\end{lemma}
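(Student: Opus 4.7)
The plan is to apply Ohtani's lemma at the internal vertex $v$ of $G$ and to bound the projective dimension of each of the three modules in the resulting short exact sequence via Lemma~\ref{pd-lemma}. First, $v$ is an internal vertex of $G$: it lies in both of the maximal cliques $F_j, F_{j+1}$ of $H$ meeting at $v$ and also in the new maximal clique $\{v,w_1\}$ coming from the attached path. Hence \cite[Lemma 4.8]{oh} gives
\[
J_G = J_{G_v} \cap \left((x_v,y_v)+J_{G\setminus v}\right),
\]
and, since $J_{G_v}+(x_v,y_v)+J_{G\setminus v} = (x_v,y_v)+J_{G_v\setminus v}$, we obtain the short exact sequence
\begin{equation*}
0 \longrightarrow \dfrac{S}{J_G} \longrightarrow \dfrac{S}{J_{G_v}} \oplus \dfrac{S}{(x_v,y_v)+J_{G\setminus v}} \longrightarrow \dfrac{S}{(x_v,y_v)+J_{G_v\setminus v}} \longrightarrow 0.
\end{equation*}

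The middle module is easy to control. Writing $v=a_{j+1}$, the graph $G\setminus v$ is the disjoint union of the path $P_{m-1}$ on $\{w_1,\ldots,w_{m-1}\}$ and of $G'\setminus v$; and $G'\setminus v$ is itself a connected closed Cohen-Macaulay graph on $n-m$ vertices, whose maximal cliques form the single chain
\begin{equation*}
F_j\setminus\{v\},\ F_{j-1},\ \ldots,\ F_1,\ \{1,n-m+1\},\ F_s,\ \ldots,\ F_{j+2},\ F_{j+1}\setminus\{v\}.
\end{equation*}
That is, the chord $e'$ simply re-joins the two chains resulting from splitting $H$ at $v$. By \cite[Theorem 3.1]{her1} together with the disjoint-union formula for projective dimension,
\[
\pd(S/J_{G\setminus v}) = (n-m-1)+(m-2) = n-3,
\]
so $\pd(S/((x_v,y_v)+J_{G\setminus v}))=n-1$.

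The other two modules require more work. Making $N_G(v) = (F_j\cup F_{j+1}\setminus\{v\})\cup\{w_1\}$ into a clique fuses $F_j, F_{j+1}$ and $w_1$ into a single clique $F_j\cup F_{j+1}\cup\{w_1\}$. Hence $G_v$ (respectively $G_v\setminus v$) decomposes as the union of a closed graph $\widetilde{H}$ on $V(H)\cup\{w_1\}$ having $s-1$ maximal cliques (respectively $\widetilde{H}\setminus v$), the chord $e'$, and the residual path $w_1,w_2,\ldots,w_{m-1}$ meeting the closed part only at the \emph{simplicial} vertex $w_1$. This is the same setup as in the lemma except that the attachment point is simplicial instead of internal; one can nevertheless bound the pd of both $G_v$ and $G_v\setminus v$ by a secondary recursion applying Ohtani at an internal vertex of the closed part and invoking Theorem~\ref{quasi-block} and \cite[Theorem 3.1]{her1} for the block-graph and quasi-cycle pieces that emerge. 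The resulting bounds $\pd(S/J_{G_v}) \leq n$ and $\pd(S/J_{G_v\setminus v}) \leq n-1$, the latter giving $\pd(S/((x_v,y_v)+J_{G_v\setminus v})) \leq n+1$, feed into the short exact sequence via Lemma~\ref{pd-lemma}(i) to yield $\pd(S/J_G) \leq n$. The main obstacle is precisely this secondary recursion: since $G_v$ and $G_v\setminus v$ fall slightly outside the lemma's hypothesis, one must either carry out a careful case analysis or mildly strengthen the inductive statement to allow attachment at a simplicial vertex.

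For the moreover part, when $\iv(G')=2$ we have $s=2$ and $H=F_1\cup F_2$ with $F_1\cap F_2=\{v\}$. Then $\widetilde{H}$ collapses to the single clique $F_1\cup F_2\cup\{w_1\}$, the chord $e'$ lies entirely inside it, and both $G_v$ and $G_v\setminus v$ become block graphs on $n$ and $n-1$ vertices respectively. By \cite[Theorem 3.1]{her1} the sharper bounds $\pd(S/J_{G_v}) \leq n-1$ and $\pd(S/J_{G_v\setminus v}) \leq n-2$ hold, and substituting into the short exact sequence gives $\pd(S/J_G) \leq n-1$. Conversely, $P_\emptyset(G) = J_{K_n}$ is a minimal prime of $J_G$ of height $n-1$, whence $\pd(S/J_G) \geq n-1$ by the standard bound on projective dimension in terms of an associated prime, completing the proof.
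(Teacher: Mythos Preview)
Your approach via Ohtani's decomposition at $v$ differs from the paper's, which instead deletes the attaching edge $e=\{u,v\}$ (with $u$ the path-endpoint glued to $v$) and uses the exact sequence
\[
0 \longrightarrow \frac{S}{J_{G\setminus e}:f_e}(-2) \stackrel{\cdot f_e}{\longrightarrow} \frac{S}{J_{G\setminus e}} \longrightarrow \frac{S}{J_G} \longrightarrow 0.
\]
Here $G\setminus e = P_{m-1}\sqcup G'$ and, by \cite[Theorem 3.4]{FM}, $J_{G\setminus e}:f_e=J_{(G\setminus e)_e}$ with $(G\setminus e)_e=P_{m-1}\sqcup G'_v$. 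Since $G'_v$ is either a quasi-cycle with one fewer internal vertex or a complete graph, Theorem~\ref{quasi-block} bounds both terms directly, and Lemma~\ref{pd-lemma} yields $\pd(S/J_G)\le n$. When $\iv(G')=2$, $G'_v$ is complete, so $\pd(S/J_{(G\setminus e)_e})=n-2<n-1=\pd(S/J_{G\setminus e})$ and Lemma~\ref{pd-lemma}(iii) forces $\pd(S/J_G)=n-1$.

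Your argument has a genuine gap, which you yourself flag: after applying Ohtani at $v$, the graphs $G_v$ and $G_v\setminus v$ are of the form ``quasi-cycle with a path attached at a \emph{simplicial} vertex of the underlying closed graph'', which lies outside the lemma's hypothesis, so you cannot invoke the lemma inductively. You assert the bounds $\pd(S/J_{G_v})\le n$ and $\pd(S/J_{G_v\setminus v})\le n-1$ but do not prove them, deferring to an unspecified ``secondary recursion'' or a strengthened inductive statement you do not formulate or verify. Without these bounds, the main inequality $\pd(S/J_G)\le n$ is not established. The paper's edge-deletion route sidesteps this entirely: both $G\setminus e$ and $(G\setminus e)_e$ split as a path disjoint from a graph already handled by Theorem~\ref{quasi-block}, so no auxiliary induction is needed. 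Your treatment of the ``moreover'' case ($\iv(G')=2$) is, by contrast, complete and correct, since there $G_v$ and $G_v\setminus v$ genuinely collapse to block graphs.
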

\begin{proof}
	If $H=P_3$, then $G$ is a closed graph such that $S/J_G$ is Cohen-Macaulay. Thus, $\pd(S/J_G)=n-1$. Assume that $H\neq P_3$. Let $e=\{u,v\} \in E(G)$ such that $G \setminus e$ is the disjoint union of a  path $P_{m-1}$ and a quasi-cycle graph $G'$. Assume that $u \in V(P_{m-1})$. It follows from \cite[Theorem 3.4]{FM} that $J_{G\setminus e} :f_e=J_{(G\setminus e)_e}$.	Observe that $(G\setminus e)_e$ is the disjoint union of a path $P_{m-1}$ and  $G'_v$. Note that $G'_v$ is either a quasi-cycle or a complete graph. Therefore, $\pd(S/J_{(G\setminus e)_e}) =m-2+\pd(S/J_{G'_v})\leq n-1$, by Theorem \ref{quasi-block}. Also, $\pd(S/J_{G\setminus e})=m-2+\pd(S/J_{G'}) =n-1$. From the following exact sequence:
	\begin{align}\label{main-ses}
	0\longrightarrow \frac{S}{J_{G\setminus e}:f_e}(-2) \stackrel{\cdot f_e}{\longrightarrow} \frac{S}{J_{G\setminus e}} \longrightarrow \frac{S}{J_G} \longrightarrow 0 ,
	\end{align} we get,  $\pd(S/J_G) \leq n$. Now, if $\iv(G') =2$, then $(G\setminus e)_e$ is the disjoint union of a path $P_{m-1}$ and a complete graph on $n-m+1$ vertices. Consequently, $\pd(S/J_{(G\setminus e)_e})=n-2$. Hence, by Lemma \ref{pd-lemma} and the short exact sequence \eqref{main-ses}, $\pd(S/J_G)=n-1$.
\end{proof}
We now consider the case that $G$ is obtained by adding a chord in a unicyclic graph and $\mathcal{I}_G$ is almost complete intersection. Let $G$ be a graph  and $H$ a subgraph of $G$. Then $G$ is said to be $H$-free graph if $H$ is not an induced subgraph of $G$.
\begin{theorem}\label{depth-odd-aci}
	Let $G$ be a graph obtained by adding a chord $e'=\{u,v\}$ in  an odd cycle $C_n$. Then $\pd(S/\mathcal{I}_G)=n+1$, if $G$ is $C_4$-free and $\pd(S/\mathcal{I}_G)=n$, if $C_4$ is an induced subgraph of $G$. 
\end{theorem}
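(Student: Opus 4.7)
The plan is to apply the short exact sequence
\[
0 \to \tfrac{S}{\mathcal{I}_{G\setminus e}:\bar{g}_e}(-2) \xrightarrow{\cdot \bar{g}_e} \tfrac{S}{\mathcal{I}_{G\setminus e}} \to \tfrac{S}{\mathcal{I}_G} \to 0
\]
with a carefully chosen edge $e$. Let $e' = \{u,v\}$ be the chord; it splits $C_n$ into two paths of lengths $a$ and $b$ with $a+b=n$ odd, so exactly one of $a,b$ is even. By symmetry assume $a$ is even, so the odd sub-cycle of $G$ has length $a+1$ and the even sub-cycle has length $m := b+1$. Taking $e = \{u, u_1\}$, the edge incident to $u$ on the odd side, makes $G\setminus e$ bipartite, namely $C_m$ with a pendant path of length $a-1$ attached at $v$. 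Note that $G$ contains an induced $C_4$ iff $m=4$, while $G$ is $C_4$-free iff $m \geq 6$.

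By Remark \ref{main-rmk} and Lemma \ref{colon-non-bipartite}, the projective dimensions on both ends of the SES reduce to those of binomial edge ideals: $\pd(S/\mathcal{I}_{G\setminus e}) = \pd(S/J_{G\setminus e})$ and $\pd(S/(\mathcal{I}_{G\setminus e}:\bar{g}_e)) = \pd(S/J_{(G\setminus e)_e})$. The graph $(G\setminus e)_e$ arises from $G\setminus e$ by adjoining an edge between the two remaining neighbors of $u$, producing a quasi-cycle with a pendant path attached at $v$. In the $C_4$-induced case ($m=4$), this quasi-cycle has $\iv = 2$, so Lemma \ref{quasi-path} yields $\pd(S/J_{(G\setminus e)_e}) = n-1$; similarly $\pd(S/J_{G\setminus e}) = n-1$ by standard SES arguments on binomial edge ideals of $C_4$ with a pendant path (cf.\ \cite{dav}). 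Lemma \ref{pd-lemma} then gives $\pd(S/\mathcal{I}_G) \leq n$, and the universal lower bound $\pd(S/\mathcal{I}_G) \geq \h(\mathcal{I}_G) = n$ (almost complete intersection by Theorem \ref{non-bipartite-odd-aci}) forces $\pd(S/\mathcal{I}_G) = n$.

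In the $C_4$-free case ($m \geq 6$), $\iv$ of the quasi-cycle inside $(G\setminus e)_e$ exceeds $2$, so Lemma \ref{quasi-path} combined with Theorem \ref{quasi-block} still gives only $\pd(S/J_{(G\setminus e)_e}) \leq n$. Equality $\pd(S/J_{(G\setminus e)_e}) = n$ is established by exhibiting a cut-point set $T$ of $(G\setminus e)_e$ of size $2$ yielding a minimal prime $P_T$ of $J_{(G\setminus e)_e}$ of height $n$. Meanwhile the bound $\pd(S/J_{G\setminus e}) \leq n-1$ persists (even cycle with a single pendant path), so the strict inequality $\pd(S/J_{(G\setminus e)_e}) > \pd(S/J_{G\setminus e})$ triggers Lemma \ref{pd-lemma}(iii) applied to the SES, giving $\pd(S/\mathcal{I}_G) = \pd(S/J_{(G\setminus e)_e}) + 1 = n+1$. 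The main obstacle is pinning down the exact value $\pd(S/J_{(G\setminus e)_e}) = n$ in the $C_4$-free case, because Lemma \ref{quasi-path} supplies only the upper bound; matching it requires a careful identification of a cut-point/bipartition-vertex configuration in the non-block graph $(G\setminus e)_e$ that forces the projective dimension upward as $m$ grows beyond $4$.
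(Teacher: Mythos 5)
Your skeleton is the paper's: the short exact sequence \eqref{ses-LS} for an edge $e$ of the odd induced cycle, Lemma \ref{colon-non-bipartite} to identify the colon ideal with $\varPhi_2(J_{(G\setminus e)_e})$, and Lemmas \ref{quasi-path} and \ref{pd-lemma}. Your $C_4$ case reaches the correct conclusion, though note that your intermediate claim $\pd(S/J_{G\setminus e})=n-1$ is false: $G\setminus e$ is an even unicyclic graph with a \emph{single} pendant path, its parity binomial edge ideal is almost complete intersection (Theorem \ref{bipartite-aci}) but $S/\mathcal{I}_{G\setminus e}$ is \emph{not} Cohen--Macaulay by \cite[Theorem 6.1]{dav} (that requires paths attached at two adjacent vertices of a $C_4$), so Remark \ref{depth-rmk} forces $\pd(S/\mathcal{I}_{G\setminus e})=n$. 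In the $C_4$ case this error is harmless, since Lemma \ref{pd-lemma}(ii) still gives the upper bound $n$ and your lower bound $\pd(S/\mathcal{I}_G)\geq \h(\mathcal{I}_G)=n$ is valid (the paper instead applies \ref{pd-lemma}(iii) with $\pd(S/\mathcal{I}_{G\setminus e})=n>n-1$).

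In the $C_4$-free case, however, the same error is fatal and there is a genuine gap. Your plan is to prove $\pd(S/J_{(G\setminus e)_e})=n$ exactly and then invoke the strict inequality $\pd(M)>\pd(N)$ to force $\pd(S/\mathcal{I}_G)=\pd(M)+1=n+1$. But with the correct value $\pd(S/\mathcal{I}_{G\setminus e})=n$, you would have $\pd(M)=\pd(N)=n$, and when the two projective dimensions in a short exact sequence coincide, the long exact sequence of Tor permits cancellation; no version of Lemma \ref{pd-lemma} then gives a lower bound, only $\pd(S/\mathcal{I}_G)\leq n+1$. So the equality $\pd(S/J_{(G\setminus e)_e})=n$ --- which you yourself flag as the unresolved obstacle --- would not help even if established, and the paper does not need it: Lemma \ref{quasi-path} is used only for the bound $\pd(S/J_{(G\setminus e)_e})\leq n$, which yields $\pd(S/\mathcal{I}_G)\leq n+1$ via \ref{pd-lemma}(ii). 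The missing idea is the paper's direct lower bound from the primary decomposition: since the induced even cycle has length $\geq 6$, one can choose nonadjacent vertices $i,j$ of that cycle with $i,j\notin\{u,v\}$; then $T=\{i,j\}\in\mathcal{C}(G)$ with $b_G(T)=1$, so $\mathfrak{p}^{\sigma}_T(G)$ is a minimal prime of $\mathcal{I}_G$ of height $n+|T|-b_G(T)=n+1$, and localizing at this associated prime gives $\pd(S/\mathcal{I}_G)\geq n+1$. Your proposal never produces a lower bound of this kind, so as written it does not prove $\pd(S/\mathcal{I}_G)=n+1$ in the $C_4$-free case.
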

\begin{proof}
	Let $e=\{v,w\} \in E(G)$ be an edge of the induced odd cycle. Observe that $G\setminus e$ is an even unicyclic graph such that $\mathcal{I}_{G\setminus e}$ is almost complete intersection, by Theorem \ref{bipartite-aci} and $S/\mathcal{I}_{G\setminus e}$ is not Cohen-Macaulay, by \cite[Theorem 6.1]{dav}.  By Remark \ref{depth-rmk}, $\pd(S/\mathcal{I}_{G\setminus e})=n$. It follows from Lemma \ref{colon-non-bipartite} that $\mathcal{I}_{G \setminus e} :\bar{g}_e=\varPhi_2(J_{(G\setminus e)_e})$. Notice that $(G\setminus e)_e =(G\setminus e)_v$ is a graph obtained by attaching a path to an internal vertex of a quasi-cycle graph $G'$. If $C_4$ is an induced subgraph of $G$, then $\iv(G') =2$ and  hence,   $\pd(S/J_{(G\setminus e)_e}) =n-1$, by Lemma \ref{quasi-path}.   Now, by the short exact sequence \eqref{ses-LS} and Lemma \ref{pd-lemma}, $\pd(S/\mathcal{I}_G)=n$. In the case that $G$ is a $C_4$-free graph,  the induced even cycle has length $\geq 6$. Let $i,j \notin \{u,v\}$ be vertices of the induced even cycle such that $i$ is not adjacent to $j$.  Clearly, $T=\{i,j\} \in \mathcal{C}(G)$ and $b_G(T)=1$.  Consequently, $\pd(S/\mathcal{I}_G) \geq \h(\mathfrak{p}^{\sigma}_T(G))=n+1$. By Lemma \ref{quasi-path},  $\pd(S/J_{(G\setminus e)_e}) \leq n$. Thus, by applying Lemma \ref{pd-lemma} on the short exact sequence \eqref{ses-LS}, we get,  $\pd(S/\mathcal{I}_G) \leq n+1$,  which proves the assertion.
\end{proof}

\begin{theorem}\label{depth-even-aci}
	Let $G$ be a non-bipartite graph obtained by adding a chord $e=\{u,v\}$ in  an even cycle $C_n$. Then $\pd(S/\mathcal{I}_G)=n+1$. 
\end{theorem}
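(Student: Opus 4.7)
The plan is to establish $\pd(S/\mathcal{I}_G) \geq n+1$ and $\pd(S/\mathcal{I}_G) \leq n+1$ separately, following the template of Theorem~\ref{depth-odd-aci}. Label $C_n$ by $u_1, u_2, \ldots, u_n$ cyclically and write $u = u_1$, $v = u_{p+1}$. Non-bipartiteness of $G$ forces both sub-cycles obtained by splitting $C_n$ along the chord to be odd, so $p$ and $n-p$ are both even with $2 \leq p \leq n-2$.

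For the lower bound I would take $T = \{u_2, u_n\}$, the two cycle-neighbors of $u$. A direct inspection shows that $u_2$ is a bipartition vertex of $G \setminus u_n$ (and symmetrically for $u_n$): after removing $u_n$ one is left with an odd unicyclic graph, while removing both $u_2$ and $u_n$ destroys both odd sub-cycles and leaves the connected bipartite tree formed by the path $u_3 u_4 \cdots u_{n-1}$ together with the pendant edge $\{u_1, u_{p+1}\}$ coming from the chord. Hence $T \in \mathcal{C}(G)$, $c_G(T) = b_G(T) = 1$, and the sign tuple $\sigma$ is vacuous, so $\mathfrak{p}_T(G)$ is a (trivially sign-split) minimal prime of height $n + |T| - b_G(T) = n+1$. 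Since char$(\KK) \neq 2$ forces $\mathcal{I}_G$ to be radical, this minimal prime is associated, and thus $\pd(S/\mathcal{I}_G) \geq n + 1$.

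For the upper bound, denote the chord by $e = \{u,v\}$ and consider the short exact sequence
\[
0 \longrightarrow \frac{S}{\mathcal{I}_{C_n} : \bar{g}_e}(-2) \xrightarrow{\cdot \bar{g}_e} \frac{S}{\mathcal{I}_{C_n}} \longrightarrow \frac{S}{\mathcal{I}_G} \longrightarrow 0.
\]
By Theorem~\ref{bipartite-aci}, $\mathcal{I}_{C_n}$ is almost complete intersection, and by Remark~\ref{depth-rmk} $S/\mathcal{I}_{C_n}$ is not Cohen-Macaulay (as $C_n$ is not obtained by attaching a path to two adjacent vertices of $C_4$), whence $\pd(S/\mathcal{I}_{C_n}) = n$. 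Since $G$ is non-bipartite and $C_n$ is bipartite, Lemma~\ref{colon-non-bipartite} yields $\mathcal{I}_{C_n} : \bar{g}_e = \varPhi_2(J_{(C_n)_e})$, so the projective dimension of the colon module equals $\pd(S/J_{(C_n)_e})$. Lemma~\ref{pd-lemma} then reduces the upper bound to the claim $\pd(S/J_{(C_n)_e}) \leq n$, which gives $\pd(S/\mathcal{I}_G) \leq \max\{n,\, \pd(S/J_{(C_n)_e}) + 1\} \leq n+1$.

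The main obstacle is establishing $\pd(S/J_{(C_n)_e}) \leq n$. The graph $(C_n)_e$ is $C_n$ augmented by the two short chords $\{u_2, u_n\}$ and $\{u_p, u_{p+2}\}$, which creates triangles at $u_1$ and $u_{p+1}$ (both of which become simplicial). My strategy would be a further short exact sequence obtained by deleting $\{u_p, u_{p+2}\}$,
\[
0 \longrightarrow \frac{S}{J_H : f_{u_p, u_{p+2}}}(-2) \longrightarrow \frac{S}{J_H} \longrightarrow \frac{S}{J_{(C_n)_e}} \longrightarrow 0,
\]
where $H = C_n \cup \{u_2, u_n\}$, together with the colon identity $J_H : f_{u_p, u_{p+2}} = J_{H_{\{u_p, u_{p+2}\}}}$ from \cite[Proposition~2.1]{RR14}. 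Analyzing $H$ and $H_{\{u_p, u_{p+2}\}}$ via Ohtani's formula at the internal degree-three vertices (such as $u_2$ or $u_p$) and iterating the deletion-and-colon reduction, one is left with block graphs whose projective dimensions are controlled by \cite[Theorem~3.1]{her1}; this should yield $\pd(S/J_{(C_n)_e}) \leq n$ and, together with the lower bound, complete the proof.
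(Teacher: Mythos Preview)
Your lower bound argument via $T=\{u_2,u_n\}$ is correct and is essentially the paper's choice (the paper picks any $i,j\notin\{u,v\}$ lying on distinct induced odd cycles, and $u_2,u_n$ are precisely the simplest such pair). The remark about radicality is unnecessary, since minimal primes are always associated.

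For the upper bound, your reduction to $\pd(S/J_{(C_n)_e})\le n$ is also exactly what the paper does, and your computation of $\pd(S/\mathcal I_{C_n})=n$ via Remark~\ref{depth-rmk} is a valid alternative to the paper's route. The gap is in the last step: you propose to peel off the chord $\{u_p,u_{p+2}\}$, invoke a further colon identity, and then ``iterate'' Ohtani's decomposition until only block graphs remain---but you do not carry this out, and the bookkeeping (especially when $p=2$ or $p=n-2$, where the two new chords interact at a common vertex) is not trivial. As written this is a sketch, not a proof.

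The paper avoids this entirely by a single observation you are missing: $(C_n)_e$ is \emph{itself} a quasi-cycle. Indeed, its maximal cliques are the two triangles $\{u_n,u_1,u_2\}$ and $\{u_p,u_{p+1},u_{p+2}\}$ together with the remaining edges of $C_n$, arranged in a cycle of cliques each meeting the next in a single vertex; deleting one edge of the long arc gives a Cohen--Macaulay closed graph in the sense of Theorem~\ref{quasi-block}. Hence Theorem~\ref{quasi-block} applies directly to give $\pd(S/J_{(C_n)_e})=n$ (and, incidentally, also $\pd(S/J_{C_n})=n$, since $C_n$ is the quasi-cycle associated to $P_n$). So the whole inductive apparatus you set up---the second short exact sequence, the analysis of $H=C_n\cup\{u_2,u_n\}$ and of $H_{\{u_p,u_{p+2}\}}$---is unnecessary: one line replaces it.
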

\begin{proof}
	By virtue of Lemma \ref{colon-non-bipartite}, $ \mathcal{I}_{G\setminus e} :\bar{g}_e =\varPhi_2(J_{(G\setminus e)_e})$. Observe that $(G\setminus e)_e$ is a quasi-cycle graph on $n$ vertices which is not a triangle. Therefore, $\pd(S/\mathcal{I}_{G\setminus e}:\bar{g}_e) =\pd(S/J_{(G\setminus e)_e}) =n,$  by Theorem \ref{quasi-block}. Also, $G\setminus e$ is a quasi-cycle graph on $[n]$ so that by Theorem \ref{quasi-block}, $\pd(S/\mathcal{I}_{G\setminus e})= n$. Thus, using Lemma \ref{pd-lemma} on the short exact sequence \eqref{ses-LS}, we have $\pd(S/\mathcal{I}_G) \leq n+1$. Observe that $G$ has two induced odd cycles. Let $i,j \notin V(G)\setminus \{u,v\}$  such that $i$  and $j$ are vertices of distinct induced odd cycles in $G$. Then  $T=\{i,j\} \in \mathcal{C}(G)$ and $b_G(T)=1$. Hence,  $\pd(S/\mathcal{I}_G) \geq \h(\mathfrak{p}^{\sigma}_T(G)) = n+1$ which completes   the proof.
\end{proof}
 A graph $G$ on $[5]$ with edge set $E(G)=\{\{1,2\},\{2,3\},\{3,4\},\{1,4\},\{2,4\},\{3,5\}\}$ is called \textit{Kite} graph.
\begin{theorem}\label{depth-kite-aci}
	Let $G$ be a non-bipartite graph on $[n]$. Assume that $G$ satifies hypothesis of Theorem \ref{nonbipartite-even-aci}$(2)$. Then $\pd(S/\mathcal{I}_G)= n+1$, if $G$ is a Kite-free graph and $\pd(S/\mathcal{I}_G)= n$, if Kite is an induced subgraph of $G$. 
\end{theorem}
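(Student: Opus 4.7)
The plan is to adapt the short-exact-sequence approach of Theorems~\ref{depth-odd-aci} and~\ref{depth-even-aci}, taking $e=\{u,v\}$ to be the chord of $G$. Write $H=G\setminus e$, which is the bipartite even unicyclic graph obtained by attaching a path at the vertex $i$ of $C_{2k}$, with $\{u,i\},\{v,i\}\in E(C_{2k})$. From
\[
0\longrightarrow \frac{S}{\mathcal{I}_H:\bar{g}_e}(-2)\xrightarrow{\ \cdot\bar{g}_e\ }\frac{S}{\mathcal{I}_H}\longrightarrow \frac{S}{\mathcal{I}_G}\longrightarrow 0
\]
together with Lemma~\ref{colon-non-bipartite}, it suffices to control $\pd(S/\mathcal{I}_H)$ and $\pd(S/J_{H_e})$. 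By Theorem~\ref{bipartite-aci}, $\mathcal{I}_H$ is almost complete intersection; since $H$ is not of the special form singled out by \cite[Theorem 6.1]{dav}, $S/\mathcal{I}_H$ is not Cohen--Macaulay, and Remark~\ref{depth-rmk} gives $\pd(S/\mathcal{I}_H)=n$.

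Let $u'$ (resp.~$v'$) denote the cycle-neighbor of $u$ (resp.~$v$) other than $i$, so $H_e=H\cup\{\{i,u'\},\{i,v'\}\}$. Assume first that Kite is an induced subgraph of $G$, so $k=2$ and $u'=v'$. Then $H_e$ is the diamond $K_4$ minus the edge $\{u,v\}$ with the pendant path of $H$ still attached at $i$. In the labeling $u'=1,\,u=2,\,i=3,\,v=4$, the diamond is the quasi-cycle associated to the paw graph $H'$ with edges $\{1,2\},\{1,3\},\{2,3\},\{3,4\}$; this $H'$ is closed in this labeling, its two maximal cliques $\{1,2,3\},\{3,4\}$ meet in the singleton $\{3\}$ (so $S/J_{H'}$ is Cohen--Macaulay), and $i=3$ is internal in $H'$. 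Since $\iv(\text{diamond})=2$, the moreover clause of Lemma~\ref{quasi-path} yields $\pd(S/J_{H_e})=n-1$, so Lemma~\ref{pd-lemma} applied to the exact sequence above gives $\pd(S/\mathcal{I}_G)\le n$; the matching bound $\pd(S/\mathcal{I}_G)\ge\h(\mathfrak{p}^{+}(G))=n$ follows from the non-bipartiteness of $G$.

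Now suppose $G$ is Kite-free, so $k\ge 3$ and $u'\ne v'$. I relabel the cycle as $u'=1,\,u=2,\,i=3,\,v=4,\,v'=5,\,w_1=6,\ldots,w_{2k-5}=2k$, making the cycle part of $H_e$ the graph on $[2k]$ with edges $\{1,2\},\{2,3\},\ldots,\{2k-1,2k\},\{1,2k\}$ (the $2k$ cycle edges) together with chords $\{1,3\}$ and $\{3,5\}$. Delete the edge $\{1,2k\}$ to obtain a graph $H'$ whose maximal cliques $\{1,2,3\},\{3,4,5\},\{5,6\},\{6,7\},\ldots,\{2k-1,2k\}$ form a chain with singleton intersections; a direct check shows $H'$ is closed in this labeling and $S/J_{H'}$ is Cohen--Macaulay. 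Since $i=3$ is internal in $H'$ and the pendant path of $H$ is attached at $i$ in the quasi-cycle $G'=H'\cup\{1,2k\}$, Lemma~\ref{quasi-path} yields $\pd(S/J_{H_e})\le n$, whence Lemma~\ref{pd-lemma} gives $\pd(S/\mathcal{I}_G)\le n+1$.

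For the matching lower bound set $T=\{u',v'\}$. Removing $u'$ and $v'$ from $G$ produces two components: the non-bipartite one containing the triangle $\{u,v,i\}$ and the pendant path, and a bipartite one carried by $\{w_1,\ldots,w_{2k-5}\}$ (a path, or a single vertex when $k=3$); hence $c_G(T)=2$ and $b_G(T)=1$. Each of $u',v'$ is a cut vertex of $G\setminus v'$ (resp.\ $G\setminus u'$), so $T\in\mathcal{C}(G)$. Furthermore $b_G(\{u'\})=b_G(\{v'\})=0$ since each of $G\setminus u',G\setminus v'$ is connected and contains the triangle $\{u,v,i\}$; thus $\mathcal{A}_T=\emptyset$ and the sign-split condition is vacuous, making $\mathfrak{p}^{\sigma}_T(G)$ a minimal prime of $\mathcal{I}_G$ of height $n+|T|-b_G(T)=n+1$. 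This forces $\pd(S/\mathcal{I}_G)\ge n+1$. The main technical step is the quasi-cycle identification of the cycle part of $H_e$ above, after which everything reduces to Lemma~\ref{quasi-path} for the upper bound and the minimal-prime computation for the lower bound.
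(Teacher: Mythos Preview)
Your proof is correct and follows essentially the same approach as the paper: both delete the chord $e=\{u,v\}$, use the short exact sequence together with Lemma~\ref{colon-non-bipartite}, identify $(G\setminus e)_e$ as a quasi-cycle with a path attached at the internal vertex $i$, and invoke Lemma~\ref{quasi-path}. Your version is more explicit where the paper is terse---you write down the closed labeling of the underlying Cohen--Macaulay graph and you exhibit the concrete set $T=\{u',v'\}$ for the lower bound in the Kite-free case (the paper just points back to the proof of Theorem~\ref{nonbipartite-even-aci}). One small streamlining: in the Kite case you prove $\pd\le n$ from Lemma~\ref{pd-lemma}(ii) and then separately argue $\pd\ge n$ via the associated prime $\mathfrak p^{+}(G)$; since $\pd(S/\mathcal I_{G\setminus e})=n>n-1=\pd(S/J_{(G\setminus e)_e})$, Lemma~\ref{pd-lemma}(iii) gives $\pd(S/\mathcal I_G)=n$ in one stroke, which is how the paper phrases it.
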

\begin{proof}
	Note that $G \setminus e$  is an even unicyclic graph.  By Theorem \ref{bipartite-aci}, $\mathcal{I}_{G\setminus e}$ is almost complete intersection and by \cite[Theorem 6.1]{dav}, $S/\mathcal{I}_{G\setminus e}$ is not Cohen-Macaulay.  Therefore, it follows from Remark \ref{depth-rmk} that $\pd(S/\mathcal{I}_{G\setminus e}) =n$. By virtue of Lemma \ref{colon-non-bipartite}, $\mathcal{I}_{G \setminus e}:\bar{g}_e =\varPhi_2(J_{(G\setminus e)_e})$. Notice that $(G\setminus e)_e$ is a graph obtained by attaching a path to an internal vertex of a quasi-cycle graph $G'$. Now, if both the induced odd cycles of $G$ have girth three, then $\iv(G') =2$.  Thus, by Lemma  \ref{quasi-path}, $\pd(S/\mathcal{I}_{G\setminus e}:\bar{g}_e)= \pd(S/J_{(G \setminus e)_e})= n-1$. Hence, by the short exact sequence \eqref{ses-LS}, $\pd(S/\mathcal{I}_G) =n$. If $G$ has an  induced odd cycle of girth $\geq 5$, then by the  proof of Theorem \ref{nonbipartite-even-aci}(2), there exists $T \in \mathcal{C}(G)$ such that $\h(\mathfrak{p}^{\sigma}_T(G))=n+1$. Therefore, $\pd(S/\mathcal{I}_G) \geq \h(\mathfrak{p}^{\sigma}_T(G))=n+1$. By virtue of Lemmas \ref{colon-non-bipartite}, \ref{quasi-path}, $\pd(S/\mathcal{I}_{G\setminus e}:\bar{g}_e)=\pd(S/J_{(G\setminus e)_e}) \leq n$. Hence, by Lemma \ref{pd-lemma} and the short exact sequence \eqref{ses-LS}, the desired result follows.
\end{proof}

The following theorem is an immediate consequence of Theorems \ref{non-bipartite-odd-aci}, \ref{nonbipartite-even-aci}, \ref{depth-odd-aci}, \ref{depth-even-aci} and Theorem \ref{depth-kite-aci}.
\begin{theorem}\label{rees-cohen2}
	Let $G$ be a graph on $[n]$ obtained by adding a chord in a unicyclic graph. Assume that  char$(\K)\neq  2$ and $\mathcal{I}_G$ is almost complete intersection. Then
	 \begin{enumerate}
		\item $S/\mathcal{I}_G$ is Cohen-Macaulay if and only if  either $C_4$ is an induced  subgraph of $G$ or  Kite graph is an induced subgraph of $G$.
		\item $S/\mathcal{I}_G$ is almost Cohen-Macaulay if and only if  $G$ is $C_4$-free and Kite-free.
	\end{enumerate}
	Moreover, $\R(\mathcal{I}_G)$ and $\gr_S(\mathcal{I}_G)$ are Cohen-Macaulay.
\end{theorem}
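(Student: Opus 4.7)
The plan is to reduce the statement to the projective dimension computations already in hand and then apply Theorem \ref{aci-cmrees}. Since $G$ is obtained by adding a chord in a unicyclic graph on $n$ vertices, it has $n+1$ edges, so $\mu(\mathcal{I}_G) = n+1$. The almost complete intersection hypothesis then forces $\h(\mathcal{I}_G) = n$ and therefore $\dim(S/\mathcal{I}_G) = 2n - n = n$. By the Auslander–Buchsbaum formula, $\depth(S/\mathcal{I}_G) = 2n - \pd(S/\mathcal{I}_G)$, so $S/\mathcal{I}_G$ is Cohen–Macaulay exactly when $\pd(S/\mathcal{I}_G) = n$ and almost Cohen–Macaulay exactly when $\pd(S/\mathcal{I}_G) = n+1$. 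So the task reduces to computing $\pd(S/\mathcal{I}_G)$ in terms of the structure of $G$.

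Next I would invoke Theorems \ref{non-bipartite-odd-aci} and \ref{nonbipartite-even-aci} to split $G$ into three mutually exclusive structural classes: (a) a chord added to an odd cycle; (b) a chord added to an even cycle producing a non-bipartite graph; (c) a chord added in the configuration of Theorem \ref{nonbipartite-even-aci}(2). In each class I would appeal to the corresponding projective dimension theorem: Theorem \ref{depth-odd-aci} for (a), Theorem \ref{depth-even-aci} for (b), and Theorem \ref{depth-kite-aci} for (c).

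The main point to verify is that in each case the projective dimension dichotomy lines up with the induced-subgraph dichotomy claimed in (1) and (2). In case (a), only the two chord endpoints have degree $\geq 3$, whereas Kite has three such vertices, so $G$ is automatically Kite-free, and Theorem \ref{depth-odd-aci} gives $\pd = n$ iff $C_4$ is induced, else $\pd = n+1$. In case (b), the chord must join two vertices of the same bipartition class of the even cycle (otherwise $G$ stays bipartite), so the two cycles created by the chord have odd length and $G$ is $C_4$-free; it is also Kite-free for the degree reason above, and Theorem \ref{depth-even-aci} delivers $\pd = n+1$. In case (c), the chord plus the two cycle edges through the common vertex $i$ creates a triangle, while the other cycle through the chord has odd length $2k-1$; hence $G$ is always $C_4$-free, and Theorem \ref{depth-kite-aci} gives $\pd = n$ iff Kite is induced (which happens exactly when the even cycle has length $4$), else $\pd = n+1$. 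This combinatorial case-check is the only real work, and I expect it to be the main (though modest) obstacle.

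Combining the three cases, $\pd(S/\mathcal{I}_G) = n$ iff $G$ has an induced $C_4$ or induced Kite, and $\pd(S/\mathcal{I}_G) = n+1$ otherwise, proving (1) and (2). For the moreover assertion, note that in every case $\depth(S/\mathcal{I}_G) \geq n-1 = \dim(S/\mathcal{I}_G) - 1$, so Theorem \ref{aci-cmrees}(1) yields that $\gr_S(\mathcal{I}_G)$ is Cohen–Macaulay; since $\h(\mathcal{I}_G) = n > 0$, Theorem \ref{aci-cmrees}(2) then gives that $\R(\mathcal{I}_G)$ is Cohen–Macaulay as well.
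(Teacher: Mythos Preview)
Your proposal is correct and follows the same approach as the paper, which simply states that the result is an immediate consequence of Theorems \ref{non-bipartite-odd-aci}, \ref{nonbipartite-even-aci}, \ref{depth-odd-aci}, \ref{depth-even-aci}, and \ref{depth-kite-aci}. You have filled in the details the paper leaves implicit: the reduction of Cohen--Macaulay/almost Cohen--Macaulay to $\pd = n$ versus $\pd = n+1$ via Auslander--Buchsbaum, and the combinatorial verification that the $C_4$/Kite dichotomies in the three structural classes match the global dichotomy in the statement.
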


\section{First Syzygy of LSS ideals}
In this section,  we compute the defining ideal of symmetric algebra of LSS ideals of trees and odd unicyclic graphs. Let $A$ be a Noetherian ring and $I \subset A$ be an ideal.  Let $A^m \stackrel{\phi}\longrightarrow A^n \longrightarrow I \longrightarrow 0$ be a presentation of $I$ and $T=[T_1 \cdots T_n]$ be a $1 \times n$ matrix of variables over ring $A$. Then the defining ideal of symmetric algebra of $I$, denoted by $Sym(I)$, is generated by entries of the matrix $T\phi$. Thus, to compute the defining ideal of symmetric algebra of LSS ideals of trees and odd unicyclic graphs, we compute the  first
syzygy of LSS ideals of trees and odd unicyclic graphs. The second graded Betti numbers of binomial edge ideals of trees are computed in \cite[Theorem 3.1]{JAR}. The results from \cite{JAR} and Remark \ref{main-rmk} gives us the  second graded Betti number LSS ideals of trees.
\begin{theorem}\label{betti-tree}
Let $G$ be a tree on $[n]$. Then  \[\beta_2(S/L_G)=	  \beta_{2,4}(S/L_G)={n-1 \choose 2}+\sum_{v \in V(G)} {\deg_G(v) \choose 3}.\]
\end{theorem}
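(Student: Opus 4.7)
The plan is to reduce this computation for the LSS ideal $L_G$ to the known computation for the binomial edge ideal $J_G$, using the fact that a tree is bipartite. Since trees are bipartite, Remark \ref{main-rmk} supplies a $\K$-algebra automorphism $\varPhi_1: S \to S$ with $\varPhi_1(J_G) = L_G$. I would first record that $\varPhi_1$ is a graded automorphism of $S$: on each pair of variables $\{x_i, y_i\}$ it acts either as the identity (if $i \in V_1$) or by $(x_i, y_i) \mapsto (y_i, -x_i)$ (if $i \in V_2$), both of which are degree-preserving invertible $\K$-linear maps on the degree-one piece of $S$.

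Next, I would observe that any graded $\K$-algebra automorphism of $S$ preserves all graded Betti numbers of finitely generated graded modules: if $F_\bullet \to S/J_G$ is a minimal graded free resolution, then applying $\varPhi_1$ to the differentials yields a minimal graded free resolution of $S/\varPhi_1(J_G) = S/L_G$ of the same shape. Consequently $\beta_{i,j}(S/L_G) = \beta_{i,j}(S/J_G)$ for all $i, j$, and in particular in homological degree $2$.

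Finally, I would invoke the explicit formula of Jayanthan--Kumar--Sarkar (\cite[Theorem 3.1]{JAR}), which states that for a tree $G$ on $[n]$, all second syzygies of $J_G$ live in degree $4$ and
\[
\beta_2(S/J_G) = \beta_{2,4}(S/J_G) = \binom{n-1}{2} + \sum_{v \in V(G)} \binom{\deg_G(v)}{3}.
\]
Combining this with the identity $\beta_{2,j}(S/L_G) = \beta_{2,j}(S/J_G)$ obtained in the previous step yields the stated formula for $\beta_2(S/L_G) = \beta_{2,4}(S/L_G)$.

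There is essentially no obstacle to overcome: the result is a direct transfer through the automorphism $\varPhi_1$. The only point requiring a line or two of justification is that a graded automorphism of $S$ indeed carries a minimal graded free resolution to a minimal graded free resolution with identical graded ranks; this is standard but worth mentioning explicitly.
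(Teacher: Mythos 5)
Your proposal is correct and is exactly the paper's argument: the paper derives Theorem \ref{betti-tree} by combining Remark \ref{main-rmk} (the graded automorphism $\varPhi_1$ with $\varPhi_1(J_G)=L_G$, which preserves all graded Betti numbers) with the formula of \cite[Theorem 3.1]{JAR} for trees. Your added remark that a graded automorphism carries minimal graded free resolutions to minimal graded free resolutions is the same standard justification the paper leaves implicit.
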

We now describe the first syzygy of $LSS$ ideals of trees.    
\begin{theorem}\label{syzygy-tree}
Let $G$ be a tree on $[n]$. Let $\left\{e_{\{i,j\}} ~ : ~ \{i, j\} \in E(G) \right\}$ be the standard basis of $S^{n-1}$. Then the first syzygy of $L_G$
is minimally generated by elements of the form
\begin{enumerate}
  \item[(a)] $g_{i,j}e_{\{k,l\}} - g_{k,l}e_{\{i,j\}}, \text{ where }
	\{i,j\}\neq \{k,l\}\in E(G) \text{ and }$
  \item[(b)]  $(-1)^{p_A(j)}f_{k,l}e_{\{i,j\}} + (-1)^{p_A(k)}
	f_{j,l}e_{\{i,k\}} + (-1)^{p_A(l)}f_{j,k}e_{\{i,l\}}, \\ \text{ where
	} A = \{i,j,k,l\} \in \mathfrak{C}_G \text{ with center at } i.$
\end{enumerate}
\end{theorem}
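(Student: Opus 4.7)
The plan has three steps: verify that the listed elements are syzygies, count them against $\beta_2(S/L_G)$ from Theorem \ref{betti-tree}, and deduce that they form a minimal generating set. For the verification, the type (a) elements are trivial Koszul relations $g_{i,j}g_{k,l} - g_{k,l}g_{i,j} = 0$. For type (b), given a claw $A$ centered at $i$ with leaves $\{j,k,l\}$, one checks by direct expansion that the listed three-term combination equals zero in $S$; the signs $(-1)^{p_A(\cdot)}$ are arranged so that every degree-four monomial appearing in the three products of an $f$-binomial with a $g$-binomial cancels.

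For the count, type (a) contributes $\binom{n-1}{2}$ pairs of edges. For type (b), the tree hypothesis gives a crucial simplification: any three neighbors of a vertex $v$ automatically form an induced claw centered at $v$, since an edge between two such neighbors would complete a triangle, contradicting acyclicity. Hence induced claws centered at $v$ correspond bijectively to $3$-subsets of $N_G(v)$, contributing $\binom{\deg_G(v)}{3}$. The total $\binom{n-1}{2} + \sum_v \binom{\deg_G(v)}{3}$ matches $\beta_2(S/L_G) = \beta_{2,4}(S/L_G)$ by Theorem \ref{betti-tree}.

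For minimality, observe that $\beta_{2,j}(S/L_G) = 0$ for $j \neq 4$ forces the first syzygy module $Z_1$ to vanish in degrees below $4$; hence $\mathfrak{m}\cdot Z_1$ contributes nothing in degree $4$, and a minimal generating set of $Z_1$ is precisely a $\K$-basis of the degree-$4$ component $(Z_1)_4$, of dimension $\beta_{2,4}$. It thus suffices to verify $\K$-linear independence of the listed elements. Reading off the coefficient of each basis vector $e_{\{i,j\}}$ in a hypothetical vanishing $\K$-combination, a type (a) syzygy pairing $\{i,j\}$ with another edge $\{k,l\}$ contributes a scalar multiple of $g_{k,l}$, while a type (b) syzygy centered at an endpoint of $\{i,j\}$ with $\{i,j\}$ as one of its three edges contributes a scalar multiple of $f_{k',l'}$ for the other two leaves $k',l'$. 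Since these $g$- and $f$-forms are distinct quadratic binomials in $S$ with disjoint supports of monomials, they are $\K$-linearly independent, forcing all scalars to vanish.

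The main obstacle is the bookkeeping in the minimality step: one must identify which $g$- and $f$-polynomials occur as coefficients of a fixed basis vector $e_{\{i,j\}}$ and confirm their $\K$-linear independence in $S$. As an independent check, one can transport from the binomial edge ideal via Remark \ref{main-rmk}: since a tree is bipartite, $\varPhi_1(J_G) = L_G$, so applying $\varPhi_1$ to a minimal generating set of the first syzygy module of $J_G$ (known from \cite[Theorem 3.1]{JAR}, the source of Theorem \ref{betti-tree}) yields, after sign adjustments governed by the bipartition, a minimal generating set for that of $L_G$ spanning the same $\K$-vector space $(Z_1)_4$.
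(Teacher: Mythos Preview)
Your argument is correct. The paper's own proof, however, is precisely the one-line transport you relegate to a closing remark: since a tree is bipartite, Remark~\ref{main-rmk} furnishes the ring automorphism $\varPhi_1$ with $\varPhi_1(J_G)=L_G$, and applying $\varPhi_1$ entrywise to the known minimal generating set of the first syzygy of $J_G$ from \cite[Theorem~3.2]{JAR} immediately produces the stated generators for $L_G$. Your main route---verify the relations, count them against Theorem~\ref{betti-tree}, and establish $\K$-linear independence in $(Z_1)_4$---is genuinely different: it is self-contained and does not depend on the external syzygy computation in \cite{JAR}, at the cost of the bookkeeping you flag. The paper's route is shorter exactly because it outsources that work. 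One minor correction: the syzygy result you want from \cite{JAR} is Theorem~3.2 there; Theorem~3.1 is the Betti-number formula behind Theorem~\ref{betti-tree}.
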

\begin{proof}
The proof follows from \cite[Theorem 3.2]{JAR} and Remark \ref{main-rmk}.
\end{proof}

We now compute the second graded Betti number of LSS ideals of odd unicyclic graphs. 

\begin{theorem}\label{betti-odd-unicyclic}
Let $G$ be an odd unicyclic graph on $[n]$. Then \[\beta_2(S/L_G) = \beta_{2,4}(S/L_G)=\binom{n}{2}+ \sum_{i \in [n]} \binom{\deg_G(i)}{3}.\]
\end{theorem}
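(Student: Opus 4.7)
The plan is to reduce to the tree case of Theorem \ref{betti-tree} via a short exact sequence. I would fix an edge $e=\{u,v\}$ of the unique odd cycle of $G$; then $G\setminus e$ is a tree on $[n]$ while $G$ itself is non-bipartite, which is precisely the hypothesis of Lemma \ref{colon-non-bipartite}. The workhorse is the sequence
\[
0 \longrightarrow \frac{S}{L_{G\setminus e}:g_e}(-2) \xrightarrow{\,\cdot\, g_e\,} \frac{S}{L_{G\setminus e}} \longrightarrow \frac{S}{L_G} \longrightarrow 0.
\]
Lemma \ref{colon-non-bipartite} identifies $L_{G\setminus e}:g_e = L_{G\setminus e}+(f_{i,j}:i,j\in N_{G\setminus e}(u)\text{ or }i,j\in N_{G\setminus e}(v))$, which is equi-generated in degree $2$. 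Since $G\setminus e$ is a tree, $u$ and $v$ share at most one common neighbour (otherwise a $4$-cycle appears), so the added binomials $f_{i,j}$ are pairwise distinct and $\K$-linearly independent from the $g_{k,l}$. Counting gives
\[
\mu(L_{G\setminus e}:g_e) \;=\; (n-1)+\binom{\deg_{G\setminus e}(u)}{2}+\binom{\deg_{G\setminus e}(v)}{2}.
\]

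Next I would extract the degree-$4$ strand of the long exact sequence in $\Tor^S(-,\K)$. Because both $L_{G\setminus e}$ and its colon are generated in degree $2$, the flanking pieces $\Tor_2(S/(L_{G\setminus e}:g_e),\K)_2$ and $\Tor_1(S/L_{G\setminus e},\K)_4$ vanish, leaving the short exact sequence
\[
0 \longrightarrow \beta_{2,4}(S/L_{G\setminus e}) \longrightarrow \beta_{2,4}(S/L_G) \longrightarrow \mu(L_{G\setminus e}:g_e) \longrightarrow 0.
\]
Substituting Theorem \ref{betti-tree} for the tree term and applying Pascal's identity $\binom{d+1}{3}=\binom{d}{3}+\binom{d}{2}$ at $u$ and $v$ (whose $G$-degrees each exceed their $G\setminus e$-degrees by one, while every other vertex is unchanged) should absorb the $\binom{\cdot}{2}$ contributions into the sum $\sum_i\binom{\deg_G(i)}{3}$; the remaining constants combine via $\binom{n-1}{2}+(n-1)=\binom{n}{2}$ to give the claimed formula.

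Finally, I would prove that $\beta_2(S/L_G)=\beta_{2,4}(S/L_G)$, i.e.\ that $\beta_{2,j}(S/L_G)=0$ for $j\neq 4$. Degrees $j\le 3$ are immediate from $L_G$ being generated in degree $2$. For $j>4$, the same long exact sequence sandwiches $\beta_{2,j}(S/L_G)$ between $\beta_{2,j}(S/L_{G\setminus e})$ (zero by Theorem \ref{betti-tree}, which gives that all of $\beta_2$ sits in degree $4$ for trees) and $\beta_{1,j-2}(S/(L_{G\setminus e}:g_e))$ (zero because the colon is equi-generated in degree $2$), forcing the middle term to vanish. The main obstacle I anticipate is the combinatorial bookkeeping in the second step: verifying that the two $\binom{\cdot}{2}$ corrections land exactly in the right places in $\sum_i\binom{\deg_G(i)}{3}$, and justifying cleanly that the possible common neighbour of $u$ and $v$ contributes nothing (because $\binom{1}{2}=0$), so that no overcount corrupts $\mu(L_{G\setminus e}:g_e)$.
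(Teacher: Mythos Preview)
Your proposal is correct and follows essentially the same route as the paper: remove an edge $e=\{u,v\}$ of the odd cycle, use the short exact sequence $0\to S/(L_{G\setminus e}:g_e)(-2)\to S/L_{G\setminus e}\to S/L_G\to 0$, identify the colon via Lemma~\ref{colon-non-bipartite}, and read off $\beta_{2,4}(S/L_G)=\beta_{2,4}(S/L_{G\setminus e})+\beta_{1,2}(S/(L_{G\setminus e}:g_e))$ from the degree-$4$ strand of the Tor long exact sequence, using Theorem~\ref{betti-tree} for the tree term. Your count $\mu(L_{G\setminus e}:g_e)=(n-1)+\binom{\deg_{G\setminus e}(u)}{2}+\binom{\deg_{G\setminus e}(v)}{2}$ agrees with the paper's $n-1+\binom{\deg_G(u)-1}{2}+\binom{\deg_G(v)-1}{2}$, and your Pascal/$\binom{n-1}{2}+(n-1)=\binom{n}{2}$ bookkeeping simply makes explicit what the paper leaves to the reader.
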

\begin{proof}
 Let $e=\{u,v\} \in E(G)$ such that $e$ is an edge of the cycle. One can note that $G\setminus e$ is a tree. We consider the  following short exact sequence:
 \begin{equation}\label{ses-LSS}
 	0 \longrightarrow \frac{S}{L_{G\setminus e}:{g}_e}(-2) \stackrel{\cdot {g}_e}\longrightarrow \frac{S}{L_{G\setminus e}} \longrightarrow \frac{S}{L_G} \longrightarrow 0.
 \end{equation} 
 The long exact sequence of Tor corresponding to the short exact sequence \eqref{ses-LSS} is 
\begin{equation}\label{Tor-les}
\cdots	\rightarrow \Tor_{2,j}^S\left(\frac{S}{L_{G\setminus e}},\K\right)\rightarrow \Tor_{2,j}^S\left(\frac{S}{L_{G}},\K\right) 
\rightarrow \Tor_{1,j}^S\left(\frac{S}{L_{G\setminus e}:g_e}(-2),\K\right)\rightarrow\cdots
\end{equation} Note that \[\Tor_{1,j}^S\left(\frac{S}{L_{G\setminus e}:g_e}(-2),\K\right) \simeq \Tor_{1,j-2}^S\left(\frac{S}{L_{G\setminus e}:g_e},\K\right).\] It follows from Lemma \ref{colon-non-bipartite} that $\beta_{1,j-2}(S/L_{G\setminus e}:g_e)=0$, if $j \neq 4$ and \[\beta_{1,2}(S/L_{G\setminus e}:g_e) =n-1+ \binom{\deg_G(u)-1}{2}+ \binom{\deg_G(v)-1}{2}.\] Now, by virtue of Theorem \ref{betti-tree}, \[\beta_2(S/L_{G\setminus e})=	  \beta_{2,4}(S/L_{G\setminus e})={n-1 \choose 2}+\sum_{i \in [n]} {\deg_{G \setminus e}(i) \choose 3}.\]
Therefore, $\beta_{2,j}(S/L_G)=0$, for $j \neq 4$. Since $\beta_{2,2}(S/L_{G\setminus e}:g_e)=0$ and $\beta_{1,4}(S/L_{G\setminus e})=0$, by \eqref{Tor-les}, $\beta_{2,4}(S/L_G)=\beta_{2,4}(S/L_{G\setminus e})+\beta_{1,2}(S/L_{G\setminus e}:g_e)$. Hence, the desired result follows.
\end{proof}
We now compute the minimal generators of the first syzygy of LSS ideals of odd unicyclic graphs.

\vskip 2mm \noindent
\textbf{Mapping Cone Construction:} 
Let $(\mathbf{F}.,d^{\mathbf{F}}.)$ and
$(\mathbf{G}.,d^{\mathbf{G}}.)$ be minimal $S$-free resolutions of
${S}/{L_{G\setminus e}}$ and $[{S}/{L_{G\setminus e}:g_e}](-2)$
respectively. Let $\varphi. :(\mathbf{G}.,d^{\mathbf{G}}.) \longrightarrow (\mathbf{F}.,d^{\mathbf{F}}.)
 $ be the
complex morphism induced by the multiplication by $g_e$. 
The mapping cone $(\mathbf{M}(\varphi).,\delta.)$ is the $S$-free resolution of
$S/L_G$
such that $(\mathbf{M}(\varphi))_i=\mathbf{F}_i\oplus
\mathbf{G}_{i-1}$ and the differential maps are
$\delta_i(x,y)=(d^{\mathbf{F}}_i(x)+\varphi_{i-1}(y),-d^{\mathbf{G}}_{i-1}(y))$
for $x\in \mathbf{F}_i$ and $y\in \mathbf{G}_{i-1}$. 
The mapping cone need not necessarily be a minimal free
resolution. We refer the reader to \cite{eisenbud95} for more details on the
mapping cone.

 \begin{theorem}\label{syzygy-odd-unicyclic}
Let $G$ be an odd unicyclic graph on $[n]$.
Let $\{e_{\{i,j\}} ~: ~ \{i, j\} \in E(G) \}$ denote the standard basis of $S^{n}$.	
Then the first syzygy of $L_G$ is minimally generated by elements of
the form
\begin{enumerate}
  \item[(a)] $g_{i,j}e_{\{k,l\}} - g_{k,l}e_{\{i,j\}}$, where $
 	\{i,j\}\neq \{k,l\}\in E(G)$
  \item[(b)] $(-1)^{p_A(v)}f_{z,w}e_{\{u,v\}} + (-1)^{p_A(z)}f_{v,w}e_{\{u,z\}} + (-1)^{p_A(w)}f_{v,z}e_{\{u,w\}},$
	where
 	$A=\{u,v,w,z\}$ forms a claw in $G$ with center $u$.
 	\end{enumerate}
 \end{theorem}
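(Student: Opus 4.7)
The natural plan is to apply the mapping cone construction to the short exact sequence \eqref{ses-LSS} already exploited in the proof of Theorem \ref{betti-odd-unicyclic}, namely
\begin{equation*}
0 \longrightarrow \frac{S}{L_{G\setminus e}:g_e}(-2) \xrightarrow{\cdot g_e} \frac{S}{L_{G\setminus e}} \longrightarrow \frac{S}{L_G} \longrightarrow 0,
\end{equation*}
with $e=\{u,v\}$ an edge of the unique odd cycle of $G$. Since $G\setminus e$ is a tree, its first syzygy is described minimally by Theorem \ref{syzygy-tree}, and Lemma \ref{colon-non-bipartite} gives the explicit minimal generators of the colon ideal $L_{G\setminus e}:g_e$: the edge binomials $g_{i,j}$ for $\{i,j\}\in E(G\setminus e)$, together with the $f_{i,j}$ for $i,j\in N_{G\setminus e}(u)$ or $i,j\in N_{G\setminus e}(v)$.

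Let $\mathbf{F}_\bullet$ and $\mathbf{G}_\bullet$ be the minimal graded free resolutions of $S/L_{G\setminus e}$ and of $(S/L_{G\setminus e}:g_e)(-2)$, and let $\varphi_\bullet:\mathbf{G}_\bullet\to\mathbf{F}_\bullet$ be a lift of multiplication by $g_e$. In the mapping cone, the second free module is $\mathbf{F}_2\oplus\mathbf{G}_1$, and its basis produces the following contributions to $\ker(S^n\to L_G)$. The basis of $\mathbf{F}_2$ gives, by Theorem \ref{syzygy-tree}, the type (a) syzygies for pairs of edges in $G\setminus e$ and the type (b) syzygies for claws contained in $G\setminus e$. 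The basis of $\mathbf{G}_1$, indexed by the minimal generators $h$ of $L_{G\setminus e}:g_e$, produces the syzygy $\varphi_1(e_h)-h\,e_{\{u,v\}}$. For $h=g_{i,j}$ one chooses $\varphi_1(e_h)=g_{u,v}\,e_{\{i,j\}}$, yielding the $n-1$ type (a) relations involving $e$. For $h=f_{i,j}$ with $i,j\in N_{G\setminus e}(u)$, the polynomial identity
\begin{equation*}
f_{i,j}\,g_{u,v} \;=\; f_{v,j}\,g_{u,i} \;-\; f_{v,i}\,g_{u,j},
\end{equation*}
verifiable by direct expansion in $S$, determines a lift $\varphi_1(e_h)=f_{v,j}\,e_{\{u,i\}}-f_{v,i}\,e_{\{u,j\}}$, so the resulting syzygy is, up to the sign conventions carried over from Theorem \ref{syzygy-tree}, the type (b) relation at the new claw $A=\{u,v,i,j\}$ centered at $u$. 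The case $i,j\in N_{G\setminus e}(v)$ is symmetric, producing claws centered at $v$ with $u$ as a leaf.

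It remains to check that the mapping cone is minimal at homological position $2$, so that the syzygies exhibited above are in fact a minimal generating set. Using Theorem \ref{betti-tree} and Lemma \ref{colon-non-bipartite},
\begin{equation*}
\rank(\mathbf{F}_2)+\rank(\mathbf{G}_1)=\binom{n-1}{2}+\sum_i\binom{\deg_{G\setminus e}(i)}{3}+(n-1)+\binom{\deg_G(u)-1}{2}+\binom{\deg_G(v)-1}{2},
\end{equation*}
and the Pascal identity $\binom{d}{3}-\binom{d-1}{3}=\binom{d-1}{2}$ applied at $u$ and $v$ collapses this to $\binom{n}{2}+\sum_i\binom{\deg_G(i)}{3}$, which equals $\beta_2(S/L_G)$ by Theorem \ref{betti-odd-unicyclic}. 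Hence the mapping cone is indeed minimal at this step, and the relations of the form (a) and (b) are a minimal system of generators of the first syzygy of $L_G$. The main obstacle is the bookkeeping of signs: pinning down a lift $\varphi_1$ on each new generator $f_{i,j}$ via the identity above and verifying that the resulting syzygy matches the uniform $(-1)^{p_A(\cdot)}$ pattern of the statement rather than merely agreeing up to sign with the tree-case formula of Theorem \ref{syzygy-tree}.
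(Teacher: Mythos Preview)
Your proposal is correct and follows essentially the same route as the paper: both apply the mapping cone to the short exact sequence \eqref{ses-LSS} with $e$ an edge of the odd cycle, use Theorem \ref{syzygy-tree} for the tree $G\setminus e$ and Lemma \ref{colon-non-bipartite} for the colon ideal, lift the generators $f_{i,j}$ via the same claw identity $f_{i,j}g_{u,v}=f_{v,j}g_{u,i}-f_{v,i}g_{u,j}$, and certify minimality by matching the rank $\rank(\mathbf{F}_2)+\rank(\mathbf{G}_1)$ with $\beta_2(S/L_G)$ from Theorem \ref{betti-odd-unicyclic}. The paper is slightly more explicit in writing out the sign-adjusted formula for $\varphi_1$ on each of the three pieces $\mathcal{S}_1,\mathcal{S}_2,\mathcal{S}_3$ of the basis of $\mathbf{G}_1$, which is exactly the bookkeeping you flag as the remaining obstacle.
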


\begin{proof}
From Theorem \ref{betti-odd-unicyclic}, we know that the minimal presentation of $L_G$ is of the form
 \[  S^{\beta_{2,4}(S/L_G)} \longrightarrow
 S^{n} \longrightarrow L_{G}\longrightarrow
 0, \]
 where
 \[
   \beta_{2,4}(S/L_G)= 
	 {n \choose 2}+\sum_{v \in V(G)} {\deg_G(v)
	 \choose 3}=\binom{n}{2}+|\mathfrak{C}_G|.\]
Let $e=\{u,v\} \in E(G)$ such that $e$ is an edge of the unique odd cyclic. Since $G \setminus e$ is a tree, by Theorem \ref{syzygy-tree}, we get a minimal generating set
of the first syzygy of $L_{G\setminus e}$ as 

 \noindent
\begin{enumerate}
  \item[(a)] $g_{i,j}e_{\{k,l\}} - g_{k,l}e_{\{i,j\}}$, where $\{i,j\}\neq \{k,l\}\in E(G \setminus e),$ 
  \item[(b)] $(-1)^{p_A(j)}f_{k,l}e_{\{i,j\}} + (-1)^{p_A(k)}f_{j,l}e_{\{i,k\}} + (-1)^{p_A(l)}f_{j,k}e_{\{i,l\}},$
	where
 	$A=\{i,j,k,l\}$ forms a claw in $G\setminus e$ with center at $i$,
\end{enumerate}
By virtue of Lemma \ref{colon-non-bipartite}, we have \[L_{G\setminus e}:g_e=L_{G\setminus e}+(f_{i,j}: i,j \in N_{G\setminus e}(u) \text{ or } i,j \in N_{G\setminus e}(v)).\]
Now we apply the mapping cone construction to the short exact sequence
(\ref{ses-LSS}). Let $(\mathbf{G}.,d^{\mathbf{G}}.)$ and $(\mathbf{F}.,d^{\mathbf{F}}.)$ be minimal
free resolutions of $[S/L_{G\setminus e} :g_e](-2)$ and $S/L_{G\setminus e}$
respectively. Then $G_1 \simeq S^{\beta_{1,2}(S/L_{G\setminus e }:g_e)}, F_1 \simeq S^{n-1}$ and
$F_2\simeq S^{\beta_{2}(S/L_{G\setminus e})}$. 
Denote the standard basis of $G_1$ by $\mathcal{S}=\mathcal{S}_1\sqcup
\mathcal{S}_2 \sqcup \mathcal{S}_3$, where
$\mathcal{S}_1= \{E_{\{i,j\}} : \{i,j\}\in E(G \setminus e) \}$, $\mathcal{S}_2= \{E_{\{k,l\}} : k,l \in
N_G(v)\setminus \{u\}\}$ and $\mathcal{S}_3=\{E_{\{k,l\}} : k,l \in N_G(u)\setminus \{v\}\}$. Note that $|\mS_1| = n-1$, $|\mS_2| = {\deg_G(v)
- 1 \choose 2}$ and $|\mS_3|= \binom{\deg_G(u)-1}{2}$.  
One can note that 
\[
\begin{array}{ll}
d_1^{\mathbf{G}}(E_{\{i,j\}
})=g_{i,j}, & \text{ if } E_{\{i,j\}} \in \mathcal{S}_1,\\
d_1^{\mathbf{G}}(E_{\{i,j\}
}) =f_{i,j},  & \text{ if } E_{\{i,j\}} \in \mathcal{S}_2\sqcup \mathcal{S}_3.
\end{array}
\]
 Also, let
$\{e_{\{i,j\}} : \{i,j\}\in E(G \setminus e) \}$
be the standard basis of $F_1$. By the mapping cone construction, the
map from $G_0$ to $F_0$ is given by the multiplication by $g_e$. Now we
define $\varphi_1$ from $G_1$ to $F_1$ by

\[
\begin{array}{ll}
\varphi_1(E_{\{k,l\}
})=g_e\cdot e_{\{k,l\} }, & \text{ if } E_{\{k,l\}} \in \mathcal{S}_1,\\
\varphi_1(E_{\{k,l\} })=(-1)^{p_A(k)+p_A(u)+1}f_{u,l}e_{\{v,k\}
}+(-1)^{p_A(l)+p_A(u)+1}f_{u,k}e_{\{v,l\} },  & \text{ if } E_{\{k,l\}} \in \mathcal{S}_2,\\
\varphi_1(E_{\{k,l\} })=(-1)^{p_A(k)+p_A(v)+1}f_{v,l}e_{\{u,k\}
}+(-1)^{p_A(l)+p_A(v)+1}f_{v,k}e_{\{u,l\} },  & \text{ if } E_{\{k,l\}} \in \mathcal{S}_3.
\end{array}
\]
We need to prove that 
$d_1^{\mathbf{F}}(\varphi_1(\vec{v}))=g_e \cdot d_1^{\mathbf{G}}(\vec{v})$ for
any $\vec{v}\in G_1$. For a claw $A=\{v,u,k,l\}$ with center at $v$,
we have the relation \[(-1)^{p_A(k)+p_A(u)+1}f_{u,l}g_{v,k}
+(-1)^{p_A(l)+p_A(u)+1}f_{u,k}g_{v,l} = f_{k,l}g_{e}.\]
Similarily, for a claw $A=\{u,v,k,l\}$ with center at $u$,
we have the relation \[(-1)^{p_A(k)+p_A(v)+1}f_{v,l}g_{u,k}
+(-1)^{p_A(l)+p_A(v)+1}f_{v,k}g_{u,l} = f_{k,l}g_{e}.\]
This yields that 
$d_1^{\mathbf{F}}(\varphi_1(E_{\{i,j\} }))=g_{e} \cdot
d_1^{\mathbf{G}}(E_{\{i,j\} })$ for $E_{\{i,j\} } \in \mathcal{S}$.
So the mapping cone construction gives us a $S$-free presentation of
$L_G$ as
$$ F_2\oplus G_1\longrightarrow F_1\oplus G_0\longrightarrow F_0\longrightarrow L_G\longrightarrow 0.$$
Since $F_2\oplus
G_1 \simeq S^{\beta_{2}(S/L_G)}$ and $F_1\oplus G_0 \simeq S^n$, this is a minimal
free presentation. Hence the first syzygy of $L_G$ is minimally
generated by the images of basis elements under the map $\Phi: F_2
\oplus G_1 \longrightarrow F_1 \oplus G_0$,
where $\Phi = \begin{bmatrix} d_2^{\bf F}
& \varphi_1 \\
0 & -d_1^{\bf G} \end{bmatrix}$. Hence the assertion follows.
\end{proof}
 As a consequence of Theorem \ref{syzygy-tree} and Theorem \ref{syzygy-odd-unicyclic}, one can compute the defining ideal of symmetric algebra of LSS ideal of $G$, when $G$ is either a tree or an odd unicyclic graph.
Similarly, one can compute the first syzygy of parity binomial edge ideals of trees and odd unicyclic graphs and hence, the defining ideal of symmetric algebra of parity binomial edge ideals of trees and odd unicyclic graphs.

We now study linear type LSS ideals.  An ideal $I \subset A$ is said to be of {\it linear type} if  $Sym(I) \cong \R(I)$. Now, we recall the definition of $d$-sequence. 

\begin{definition}
Let $A$ be a commutative ring. Set $d_0 = 0$. A sequence of
elements $d_1 ,\dots ,d_n$ is said to be a $d$-sequence if 
$(d_0,d_1 ,\dots ,d_i)  : d_{i+1}d_j = (d_0,d_1 ,\dots ,d_i) : d_j$ for all $0 \leq i\leq n-1$ and for all $j\geq i+1$.
\end{definition}
We refer the reader to
 \cite{Hu80}  for more properties of
$d$-sequences.

\begin{theorem}\label{d-seq-LSS}
Let $G$ be a graph on $[n]$. Assume that $\K$ is an infinite field and char$(\K) \neq 2$. If $L_G$ is an almost complete intersection ideal,
then $L_G$ is generated by a homogeneous $d$-sequence. In particular, $L_G$ is of
linear type.
\end{theorem}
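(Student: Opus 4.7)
The plan is to exploit the explicit classification of almost complete intersection LSS ideals (Theorems \ref{bipartite-aci}, \ref{odd-unicyclic-aci}, \ref{bicycilc-aci}, \ref{non-bipartite-odd-aci}, \ref{nonbipartite-even-aci}) to reduce to a short list of graph families, construct a homogeneous $d$-sequence generating $L_G$ in each, and conclude via Huneke's theorem \cite{Hu80}.

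For every family on the list except type~(3) of Theorem \ref{odd-unicyclic-aci}, an edge $e \in E(G)$ can be chosen so that $L_{G\setminus e}$ is a complete intersection (Corollary \ref{graph-ci}) and $L_G = L_{G\setminus e} + (g_e)$. Listing the edges of $G\setminus e$ as $e_1,\ldots,e_m$, putting $d_i := g_{e_i}$ and $d_{m+1} := g_e$, the sequence $d_1,\ldots,d_m$ is a homogeneous regular sequence in $S$. I would verify the $d$-sequence identity $(d_1,\ldots,d_i) : d_{i+1}d_j = (d_1,\ldots,d_i) : d_j$ in two cases. If $i < m$, then $d_{i+1}$ is a non-zero-divisor modulo $(d_1,\ldots,d_i)$, so $\alpha d_{i+1} d_j \in (d_1,\ldots,d_i)$ forces $\alpha d_j \in (d_1,\ldots,d_i)$. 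If $i = m$ and $j = m+1$, the identity becomes $L_{G\setminus e} : g_e^2 = L_{G\setminus e} : g_e$, which follows because $\mathrm{char}(\K) \neq 2$ makes $L_{G\setminus e}$ radical by \cite{her3}, and for any radical ideal $I$ one has $I : f^k = I : f$ for every $f$ and $k \ge 1$.

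For type~(3) of Theorem \ref{odd-unicyclic-aci}, where $G$ is a triangle $uvw$ with a pendant path of length $\ge 1$ at each vertex, no single edge removal yields a complete-intersection LSS subideal, so the recipe above fails verbatim. The workaround starts from the observation that the three triangle generators $g_{uv}, g_{vw}, g_{uw}$ form a regular sequence on their own, since $L_{K_3}$ is a complete intersection by Theorem \ref{nonbipartite-ci}; combining them with the regular sequence from the pendant-path edges (a disjoint union of paths, hence c.i.\ by Corollary \ref{graph-ci}), and using the hypothesis that $\K$ is infinite to allow a generic $\K$-linear change of the triangle generators if necessary, I would assemble a length-$(n-1)$ homogeneous regular sequence inside $L_G$ whose generated ideal is radical, and then apply the radical-colon identity as above for the last generator.

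The main obstacle is precisely this type~(3) case. For the naive orderings (all path edges first and then the three triangle edges, or vice versa), once the second triangle generator is appended the intermediate ideal acquires a new minimal prime of the form $Q_{\{v\}}$, with $v$ now an internal cut-vertex of the intermediate graph; this prime contains some but not all of the remaining triangle generators and breaks the $d$-sequence identity. Overcoming this requires either a combinatorially non-obvious ordering compatible with the $\mathcal{C}(\cdot)$-structure at every intermediate step, or a generic $\K$-linear replacement of the triangle generators together with a check that the resulting length-$(n-1)$ subideal is radical. Once a $d$-sequence is exhibited, Huneke's theorem \cite{Hu80} immediately gives $\Sym(L_G) \cong \R(L_G)$, that is, $L_G$ is of linear type.
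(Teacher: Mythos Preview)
Your case-by-case strategy is quite different from the paper's argument, which is uniform and does not invoke the classification at all. The paper simply notes that for any homogeneous almost complete intersection ideal in a polynomial ring over an infinite field, \cite[Proposition~5.1]{GM99} produces a homogeneous generating set $F_1,\dots,F_{\mu(L_G)}$ with $F_1,\dots,F_{\mu(L_G)-1}$ a regular sequence; the ideal $J=(F_1,\dots,F_{\mu(L_G)-1})$ is then a complete intersection, hence unmixed, and \cite[Theorem~4.7]{HMV89} gives $J:F_{\mu(L_G)}=J:F_{\mu(L_G)}^2$ directly from the almost complete intersection hypothesis. No radicalness of $J$ is needed.

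Your approach handles every family except type~(3) of Theorem~\ref{odd-unicyclic-aci} correctly and, as the paper itself remarks after Theorem~\ref{d-seq-LSS}, has the advantage that in those cases the natural edge generators already form a $d$-sequence even over a finite field. The genuine gap is in type~(3): your proposed workaround hinges on the length-$(n-1)$ subideal being \emph{radical} after a generic $\K$-linear change of the triangle generators, but there is no mechanism producing radicalness here. The radicalness you exploit in the other cases comes from the fact that the subideal is itself an LSS ideal $L_{G\setminus e}$, and \cite{her3} applies; once you pass to generic linear combinations the subideal is no longer of this form, and a generic complete intersection of quadrics has no reason to be radical. The correct replacement is precisely the paper's step: drop radicalness and use unmixedness of the complete-intersection subideal together with \cite[Theorem~4.7]{HMV89}, which converts the almost complete intersection hypothesis on $L_G$ into the required colon identity. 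With that substitution your generic-linear-change idea becomes exactly \cite[Proposition~5.1]{GM99}, and the argument closes uniformly.
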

\begin{proof}Assume that $L_G$ is an  almost complete intersection ideal.
It follows from \cite[Proposition 5.1]{GM99}  that there exists a homogeneous set of generators $\{F_1,\ldots,F_{\mu(L_G)}\}$  of $L_G$ such that $F_1,\ldots, F_{\mu(L_G)-1}$ is a regular sequence in $S$. Since $J=(F_1,\ldots,F_{\mu(L_G)-1})$ is unmixed ideal, by  \cite[Theorem 4.7]{HMV89}, $J:F_{\mu(L_G)} =J:F_{\mu(L_G)}^2$. Hence, $L_G$ is generated by a homogeneous $d$-sequence $F_1,\ldots,F_{\mu(L_G)}$. The second assertion follows from \cite[Theorem 3.1]{Hu80}.
\end{proof}
In Theorem \ref{d-seq-LSS}, we assume that $\K$ is an infinite field, which is not a necessary condition. For, if $\K$ is a finite field with char$(\K) \neq 2$ and  $G$ is an odd unicyclic graph such that $L_G$ is almost complete intersection of type $(1)$ or type $(2)$ in Theorem \ref{odd-unicyclic-aci}, then it follows from the proof of Theorem \ref{odd-unicyclic-aci} that the generators of $L_G$ form a homogeneous $d$-sequence. Also, if $G$ is a bicyclic cactus graph such that $L_G$ is almost complete intersection, then   $L_G$ is generated by a homogeneous $d$-sequence (see proof of Theorem \ref{bicycilc-aci}).

Now, we prove that if $G$ is a bipartite graph such that $J_G$ is of linear type, then $G$ is $K_{2,3}$-free graph.
\begin{proposition}\label{rees-linear}
\begin{enumerate}
	\item Let $G$ be a bipartite graph such that $J_G$ is of linear type, then $G$ is $K_{2,3}$-free graph.
	\item If $K_4$ is an induced subgraph of $G$, then $J_G$ is not of linear type.
\end{enumerate}
\end{proposition}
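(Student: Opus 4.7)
My plan is to prove both parts by exhibiting an explicit genuinely quadratic polynomial in the $T_{\{i,j\}}$ that lies in the defining ideal of $\R(J_G)$; combined with an argument that it cannot lie in the $S[T]$-ideal generated by the linear $T$-syzygies (which is precisely the defining ideal of the symmetric algebra), this shows $J_G$ is not of linear type.

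For part (2), if $K_4$ is an induced subgraph of $G$ on vertex set $\{i_1,i_2,i_3,i_4\}$, all six binomials $f_{i_a,i_b}$ are among the minimal generators of $J_G$, and they satisfy the classical Pl\"ucker identity
\[
 f_{i_1,i_2}f_{i_3,i_4} - f_{i_1,i_3}f_{i_2,i_4} + f_{i_1,i_4}f_{i_2,i_3} = 0 \quad \text{in } S.
\]
Thus $T_{\{i_1,i_2\}}T_{\{i_3,i_4\}} - T_{\{i_1,i_3\}}T_{\{i_2,i_4\}} + T_{\{i_1,i_4\}}T_{\{i_2,i_3\}}$ lies in the defining ideal of $\R(J_G)$. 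I would then specialize $x_v,y_v \mapsto 0$ for $v \notin V(K_4)$ to reduce to $J_{K_4}$, the ideal of $2 \times 2$ minors of a generic $2 \times 4$ matrix, and invoke the classical description (e.g.\ Bruns-Vetter) that the defining ideal of its Rees algebra is minimally generated by the Eagon-Northcott linear syzygies together with the Pl\"ucker quadric, the quadric genuinely lying outside the ideal of linear syzygies.

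For part (1), suppose $G$ is bipartite with an induced $K_{2,3}$ on bipartition $\{1,2\} \sqcup \{3,4,5\}$. The direct Pl\"ucker identity $f_{1,3}f_{2,4} - f_{1,4}f_{2,3} = f_{1,2}f_{3,4}$ now involves the non-edges $\{1,2\}$ and $\{3,4\}$, so it is not a relation on generators. The idea is to eliminate $f_{1,2}$ and $f_{3,4}$ using the cofactor identities
\[
 y_5 f_{1,2} = y_2 f_{1,5} - y_1 f_{2,5}, \qquad y_1 f_{3,4} = y_3 f_{1,4} - y_4 f_{1,3},
\]
each of which is verified by direct expansion and comes from a $3 \times 3$ determinant with a repeated row. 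Multiplying the Pl\"ucker identity by $y_1 y_5$ and substituting produces the explicit quadratic
\begin{align*}
Q := \; & y_1 y_5 T_{\{1,3\}}T_{\{2,4\}} - y_1 y_5 T_{\{1,4\}}T_{\{2,3\}} - y_1 y_4 T_{\{1,3\}}T_{\{2,5\}} \\
        & + y_1 y_3 T_{\{1,4\}}T_{\{2,5\}} + y_2 y_4 T_{\{1,3\}}T_{\{1,5\}} - y_2 y_3 T_{\{1,4\}}T_{\{1,5\}},
\end{align*}
which a direct evaluation confirms lies in the defining ideal of $\R(J_G)$.

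The main obstacle, which is the technical heart of both parts, is to certify that the exhibited quadratic is not already forced by linear $T$-syzygies. My plan is to specialize to the inducing subgraph ($K_{2,3}$ or $K_4$) by killing all $x_v, y_v$ with $v$ outside that subgraph, to enumerate the minimal linear syzygies of $J_H$ (which are produced by a direct Eagon-Northcott/Koszul-style computation on a $2 \times |V(H)|$ submatrix), and then to verify by a bigraded Hilbert-series comparison or a direct Gr\"obner basis argument in $S[T]$ that the chosen quadratic cannot be written as an $S[T]$-linear combination of products of these linear syzygies with single $T$-variables. Carrying out this combinatorial comparison --- separating the contribution coming from the ``genuine'' Pl\"ucker obstruction from what is already visible in the symmetric algebra --- is the main computational step I anticipate, and it is the step that transfers the failure of the linear type property from the inducing subgraph $K_{2,3}$ or $K_4$ back to the ambient graph $G$.
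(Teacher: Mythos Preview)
Your approach is essentially correct, but in both parts the paper avoids the ``technical heart'' computation you anticipate by making sharper choices.

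For part~(2) the paper uses the same Pl\"ucker quadric $F = T_{\{1,2\}}T_{\{3,4\}} - T_{\{1,3\}}T_{\{2,4\}} + T_{\{1,4\}}T_{\{2,3\}}$, but then simply observes that $F$ has \emph{constant} coefficients in $S$. Since the generators $f_{i,j}$ of $J_G$ are $\K$-linearly independent, every linear $T$-syzygy has coefficients in the maximal ideal $\mathfrak m$ of $S$, so the defining ideal of $\operatorname{Sym}(J_G)$ is contained in $\mathfrak m\,S[T]$. As $F \notin \mathfrak m\,S[T]$, you are done---no appeal to Bruns--Vetter or specialization is needed.

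For part~(1) the key difference is that the paper finds a quadratic (in $T$) relation whose $S$-coefficients have degree~$1$, namely
\[
x_3f_{1,4}f_{2,5}-x_4f_{1,3}f_{2,5}+x_3f_{1,5}f_{2,4}-x_5f_{1,3}f_{2,4}-x_4f_{1,5}f_{2,3}+x_5f_{1,4}f_{2,3}=0,
\]
whereas your element $Q$ has $S$-degree~$2$ coefficients $y_iy_j$. With a degree-$1$ element in hand, the paper simply quotes the general fact $\beta_{2,3}(S/J_G)=0$ (from \cite{KM12}): there are no linear syzygies among the $f_{i,j}$, so the defining ideal of $\operatorname{Sym}(J_G)$ is generated in $S$-degree $\geq 2$ and cannot contain any element of $S$-degree~$1$. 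This single Betti-number citation replaces your entire Gr\"obner/Hilbert-series comparison. Your $Q$, being of $S$-degree~$2$, does not permit this shortcut and genuinely would require the computation you describe; so your route works but the paper's choice of relation makes the argument one line.
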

\begin{proof} Let $\delta : S[T_{\{i,j\}} :
	\{i,j\} \in E(G)] \longrightarrow \R(J_G)$ is the map given by $\delta(T_{\{i,j\}}) =
	f_{i,j}t$. Then $J=\ker(\delta)$ is the  defining ideal of $\R(J_G)$.
\par (1) If possible, let $K_{2,3}$ be an induced subgraph of $G$. Without loss of generality, we may assume that $V(K_{2,3})=\{1,2,3,4,5\}$ and $E(K_{2,3})=\{\{1,3\},\{1,4\},\{1,5\},\{2,3\},\{2,4\},\{2,5\}$. Then it is easy to verify that $$x_3f_{1,4}f_{2,5}-x_4f_{1,3}f_{2,5}+x_3f_{1,5}f_{2,4}-x_5f_{1,3}f_{2,4}-x_4f_{1,5}f_{2,3}+x_5f_{1,4}f_{2,3}=0.$$ Thus, $F=x_3T_{\{1,4\}}T_{\{2,5\}}-x_4T_{\{1,3\}}T_{\{2,5\}}+x_3T_{\{1,5\}}T_{\{2,4\}}-x_5T_{\{1,3\}}T_{\{2,4\}}-x_4T_{\{1,5\}}T_{\{2,3\}}+x_5T_{\{1,4\}}T_{\{2,3\}} \in J$. By \cite[Corollary 2.3]{KM12}, $\beta_{2,3}(S/J_G)=0$, i.e. there is no linear relation in the first syzygy of $J_G$. Therefore, $F$ does not belong to  the module defined by first syzygy of $J_G$, which is a contradiction. 
\par (2) Assume that $V(K_4)=\{1,2,3,4\}$. Then $f_{1,2}f_{3,4}-f_{1,3}f_{2,4}+f_{1,4}f_{2,3}=0$ and hence $F=T_{\{1,2\}}T_{\{3,4\}}-T_{\{1,3\}}T_{\{2,4\}}+T_{\{1,4\}}T_{\{2,3\}} \in J$. The assertion follows, since $F$ is a quadratic homogeneous element.
\end{proof}

We have enough experimental evidence to pose the following
conjecture:
\begin{con}
	If $G$ is an odd unicyclic graph, then $L_G$ is of linear type.
\end{con}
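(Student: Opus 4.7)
The plan is to prove the conjecture by induction on $n=|V(G)|$. For the base case, if $G$ is an odd cycle then $L_G$ is a complete intersection by Theorem \ref{nonbipartite-ci}, its minimal generators form a regular sequence (a fortiori a $d$-sequence), and hence $L_G$ is of linear type by \cite[Theorem 3.1]{Hu80}.

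For the inductive step, let $G$ be connected odd unicyclic on $[n]$ and not a cycle. Since every vertex of the unique cycle of $G$ has degree at least two, $G$ admits a leaf $v$ with unique neighbor $u$; set $e=\{u,v\}$, $G' = G\setminus v$, $S' = \K[x_i, y_i : i \neq v]$, and $S = S'[x_v, y_v]$. Then $L_{G'} \subset S'$ and $L_G = L_{G'}S + (g_{u,v})$. By induction, $L_{G'}$ is of linear type in $S'$, and since $S$ is a polynomial extension of $S'$, the ideal $L_{G'}S$ is of linear type in $S$. What remains is to show that appending $g_{u,v} = x_u x_v + y_u y_v$, which is linear in the fresh variables $x_v, y_v$ with coefficients $x_u, y_u \in S'$, still yields an ideal of linear type.

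For this key step, I would combine the explicit syzygies of Theorem \ref{syzygy-odd-unicyclic} with a peeling argument. Let $J \subseteq S[T_\alpha : \alpha \in E(G)]$ be the defining ideal of $\R(L_G)$, and let $J_1 \subset J$ be generated by the Koszul-type and claw relations listed in Theorem \ref{syzygy-odd-unicyclic}; the aim is to show $J = J_1$. Given $F \in J$ of total $T$-degree $d \geq 2$, I would separate the monomial components of $F$ in the variables $x_v, y_v$ using the linearity of $g_{u,v}$ and rewrite $F$ modulo $J_1$ as a sum of a relation of strictly smaller $T_e$-degree plus a relation whose coefficients are forced into the colon ideal $L_{G'}S : (x_u, y_u)$. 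Iterating eliminates the Rees variable $T_e$ entirely, reducing to a relation in the defining ideal of $\R(L_{G'}S) = \Sym(L_{G'}S)$, which by the inductive hypothesis is generated in $T$-degree one.

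The main obstacle is controlling this reduction at vertices $u$ that are centers of many induced claws in $G'$. The colon ideal $L_{G'}S : (x_u, y_u)$ is described via the primary decomposition $L_{G'} = \bigcap_{T \in \mathcal{C}(G')} Q_T(G')$ recalled in Section 2, and the minimal primes $Q_T(G')$ with $u \in T$ produce exactly the obstructions that must be absorbed by the claw relations centered at $u$ in Theorem \ref{syzygy-odd-unicyclic}(b). Matching the combinatorial data of $\mathcal{C}(G')$ against the available claw relations is where the bulk of the work lies; note that Theorem \ref{d-seq-LSS} cannot be applied directly because $L_{G'}$ is typically not an almost complete intersection, so the unmixedness route of \cite[Theorem 4.7]{HMV89} is unavailable and the required colon equalities must be verified by hand.
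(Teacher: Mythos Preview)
This statement is a \emph{conjecture} in the paper, not a theorem; the paper offers no proof and explicitly presents it as open, supported only by computational evidence. So there is no proof to compare your proposal against.

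Your proposal is a plan rather than a proof, and you are candid about this: the phrase ``is where the bulk of the work lies'' signals that the decisive step is missing. Two concrete issues deserve attention. First, in your peeling argument you want to separate a relation $F$ by its $T_e$-degree and use that $g_e=x_ux_v+y_uy_v$ is linear in the fresh variables $x_v,y_v$; but the coefficients of $F$ live in $S$, not in $S'$, so they may themselves involve $x_v,y_v$, and then the graded pieces in $(x_v,y_v)$ do not correspond cleanly to the $T_e$-powers. You would need an additional bigrading argument or a normal-form reduction before the separation goes through. Second, even granting the separation, the claim that the residual coefficients land in $L_{G'}S:(x_u,y_u)$ and that this colon is absorbed by the claw relations of Theorem~\ref{syzygy-odd-unicyclic}(b) is exactly the content of the conjecture in disguise: when $u$ has large degree in $G'$, the ideal $L_{G'}$ is far from unmixed, $x_u$ and $y_u$ are genuine zerodivisors modulo $L_{G'}$ (for instance $y_u f_{a,b}\in L_{G'}$ for any two neighbours $a,b$ of $u$), and you give no mechanism for showing that every element of the colon arises from the listed syzygies. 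This is precisely why the paper could only establish the almost complete intersection case via $d$-sequences (Theorem~\ref{d-seq-LSS}) and left the general odd unicyclic case open.

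In short: the inductive framework is natural, but the proposal does not close the gap, and the gap is the conjecture itself.
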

Let $A$ be a Noetherian local ring with unique maximal ideal $\mathfrak{m}$. The \textit{fiber cone } of  an ideal $I$ is the ring $\mathcal{F}_I(A)= \R(I)/\mathfrak{m}\R(I) \cong \oplus_{k \geq 0} I^k/\mathfrak{m}I^{k}$. The \textit{analytic spread} of $I$ is the Krull dimension of the  fibre cone of $I$ and it is denoted by $l(I)$. Now, we prove that the fiber cone of LSS ideals of trees and  odd unicyclic graphs is a polynomial ring, i.e. $\mu(L_G)=l(L_G)$, if $G$ is either a tree or an odd unicyclic graph. 
\begin{theorem}\label{fiber-cone}
	Let $G$ be either a tree or an odd unicyclic graph on $[n]$. Then $\mu(L_G)=l(L_G)$.
\end{theorem}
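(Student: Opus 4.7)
The plan is to first identify the fiber cone $\mathcal{F}_{L_G}(S) = \R(L_G)/\mathfrak{m}\R(L_G)$ with the subring $\K[g_e : e \in E(G)] \subseteq S$, where $\mathfrak{m} = (x_1, \ldots, x_n, y_1, \ldots, y_n)$ is the graded maximal ideal. Since each generator $g_e$ is homogeneous of $S$-degree $2$, one checks that $(L_G^k)_d = \mathfrak{m}_1 \cdot (L_G^k)_{d-1}$ for every $d > 2k$, so the quotient $L_G^k/\mathfrak{m}L_G^k$ is concentrated in $S$-degree $2k$ and equals the $\K$-linear span of the monomials $g_{e_{i_1}} \cdots g_{e_{i_k}}$. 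The natural $\K$-algebra map $\mathcal{F}_{L_G}(S) \to S$ obtained by sending each class to its unique degree-$2k$ representative is then well-defined and injective, with image precisely $\K[g_e : e \in E(G)]$. Since $l(L_G) \leq \mu(L_G) = |E(G)|$ always, the equality $\mu(L_G) = l(L_G)$ reduces to showing that the family $\{g_e : e \in E(G)\}$ is algebraically independent over $\K$.

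I would establish this algebraic independence by induction on $|V(G)|$ via leaf-stripping. Suppose $k \in V(G)$ is a leaf with unique neighbor $j$, set $e = \{k, j\}$, and put $S' = \K[\{x_i, y_i\} : i \neq k]$. Every $g_{e'}$ with $e' \neq e$ lies in $S'$, while $g_e = x_j x_k + y_j y_k$ is linear in $x_k$ over $S'[y_k]$ with leading coefficient $x_j \neq 0$. Given any homogeneous $P(T_1, T_2, \ldots, T_m) \in \K[T_1, \ldots, T_m]$ with $P(g_e, g_{e_2}, \ldots, g_{e_m}) = 0$, expand $P = \sum_s T_1^s P_s(T_2, \ldots, T_m)$ and let $s_{\max}$ be the largest index with $P_s \neq 0$. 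Extracting the coefficient of $x_k^{s_{\max}}$ from the vanishing expression yields $x_j^{s_{\max}} P_{s_{\max}}(g_{e_2}, \ldots, g_{e_m}) = 0$ in $S'$; since $S'$ is a domain and $x_j \neq 0$, we get $P_{s_{\max}}(g_{e_2}, \ldots, g_{e_m}) = 0$. The inductive hypothesis applied to $G \setminus k$ (again a tree, respectively odd unicyclic graph, on $n-1$ vertices) forces $P_{s_{\max}} = 0$, contradicting the choice of $s_{\max}$; therefore $P = 0$.

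Iterated leaf-stripping of a tree reduces to an isolated edge with a single generator, trivially algebraically independent. For an odd unicyclic graph it terminates at the unique odd cycle $C_m$, since every vertex off the cycle eventually becomes a leaf after its descendants are peeled away. For the base case $G = C_m$ ($m$ odd), Theorem \ref{nonbipartite-ci} yields that $L_{C_m}$ is a complete intersection of height $m$ in the polynomial ring on $2m$ variables, so $\{g_e : e \in E(C_m)\}$ is a homogeneous regular sequence of positive degree, and such sequences in polynomial rings over $\K$ are well known to be algebraically independent over $\K$. This completes the induction and yields $\mu(L_G) = l(L_G)$. The main obstacle in carrying the argument out is ensuring the leaf-stripping step is airtight: one must verify that in the expansion of $P(g_e, g_{e_2}, \ldots, g_{e_m})$ as a polynomial in $x_k$, the top $x_k$-coefficient is precisely $x_j^{s_{\max}} P_{s_{\max}}(g_{e_2}, \ldots, g_{e_m})$, with no contribution from the other $g_{e_i}$'s.
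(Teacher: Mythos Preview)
Your proof is correct but takes a genuinely different route from the paper's. The paper's argument is essentially a one-line appeal to a general criterion: for each class of graph it exhibits a minimal prime $\mathfrak{p}$ of $L_G$ with $\h(\mathfrak{p})=\mu(L_G)$ (namely $Q_\emptyset(G)$ for trees, and $I_{K_n}$, $\mathfrak{p}^+(G)$, or $\Psi(\eta(\mathfrak{p}^+(G)))$ for odd unicyclic graphs, depending on the base field), and then invokes \cite[Remark~2]{HMV89} to conclude $l(L_G)=\mu(L_G)$. Your approach is more hands-on: you identify the fiber cone with the $\K$-subalgebra $\K[g_e : e\in E(G)]\subseteq S$ (valid because $L_G$ is equigenerated in degree~$2$) and then prove algebraic independence of the $g_e$ directly by a leaf-stripping induction, using the complete-intersection property of $L_{C_m}$ from Theorem~\ref{nonbipartite-ci} as the base case for odd unicyclic graphs. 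The paper's method is shorter and leverages existing machinery; yours is self-contained, avoids the external citation, and makes the polynomial nature of the fiber cone completely explicit. Regarding the ``main obstacle'' you flag: it dissolves immediately, since $k$ being a leaf guarantees that no $g_{e_i}$ with $e_i\neq e$ involves $x_k$, so only the term $g_e^{s_{\max}}P_{s_{\max}}(g_{e_2},\ldots,g_{e_m})$ can contribute to the coefficient of $x_k^{s_{\max}}$, and that contribution is exactly $x_j^{s_{\max}}P_{s_{\max}}(g_{e_2},\ldots,g_{e_m})$.
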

\begin{proof}
	First, assume that $G$ is a tree. Then $Q_{\emptyset}(G)$ is a minimal prime of $L_G$ and $\h(Q_{\emptyset}(G))=n-1=\mu(L_G)$. Now, the assertion follows from \cite[Remark 2]{HMV89}. Assume that $G$ is an odd unicyclic graph. If char$(\K)=2$, then $L_G= \mathcal{I}_G$. Since $G$ is a non-bipartite graph, $\mathfrak{p}^+(G)$ is a minimal prime of $L_G$. Thus, $\h(\mathfrak{p}^{+}(G))=n= \mu(L_G)$. If $\sqrt{-1} \in \K$ and char$(\K) \neq 2$, then by Remark \ref{rmk-parity}, $\Psi(\eta(\mathfrak{p}^+(G)))$ is a minimal prime of $L_G$  such that $\h(\Psi(\eta(\mathfrak{p}^+(G)))) =n= \mu(L_G)$. Suppose that $\sqrt{-1} \notin \K$, then $I_{K_n}$ is a minimal prime of $L_G$ with $\h(I_{K_n})=n=\mu(L_G)$.  Hence,  by \cite[Remark 2]{HMV89}, the assertion follows.
\end{proof}
 
\vskip 2mm
\noindent
\textbf{Acknowledgements:} The author is grateful to his advisor A. V. Jayanthan for
his constant support, valuable ideas and suggestions. The author sincerely  thanks Santiago Zarzuela and  Sarang Sane for useful discussions. The  author thanks the National Board
for Higher Mathematics, India, for the financial support. The author also wishes to express his sincere
gratitude to the anonymous referees.

\bibliographystyle{plain}  
\bibliography{Reference}

\end{document}